\documentclass[12pt]{article}

\usepackage{amssymb,amsthm,amsmath}

\def\Ex{\mathbb E}
\def\E{\mathbb E}
\def\p{\mathbb P}
\def\Pr{\mathbb P}
\def\er{\mathbb R}
\def\ii{\mathbf{i}}

\newcommand*{\ind}[1]{\mathbf{1}_{\{#1\}}}

\def\calE{\mathcal{E}}
\def\bfi{\mathbf{i}}
\def\bfx{\mathbf{x}}
\def\bfy{\mathbf{y}}
\def\lotimes{\mathop{\otimes}\limits}
\newcommand{\R}{\mathbb{R}}
\newcommand{\N}{\mathbb{N}}

\newtheorem{thm}{Theorem}[section]
\newtheorem{lem}[thm]{Lemma}

\newtheorem{prop}[thm]{Proposition}
\newtheorem{cor}[thm]{Corollary}

\theoremstyle{definition}
\newtheorem{rem}{Remark}

\title{Tail and moment estimates for chaoses generated
by symmetric random variables with logarithmically concave tails
\thanks{Research partially supported by MNiSW Grant no. N N201 397437 and the Foundation for Polish Science.}}

\author{Rados{\l}aw Adamczak\thanks{Institute of Mathematics, University of Warsaw, Banacha 2, 02-097 Warszawa, Poland. R.Adamczak@mimuw.edu.pl.},
Rafa{\l} Lata{\l}a \thanks{Institute of Mathematics, University of Warsaw, Banacha 2, 02-097 Warszawa, Poland and Institute of Mathematics, Polish Academy of Sciences, ul. \'{S}niadeckich 8, 00-956 Warszawa, Poland. R.Latala@mimuw.edu.pl.}}

\begin{document}

\maketitle

\begin{abstract}
We present two-sided estimates of moments and tails of polynomial chaoses of order at most three generated by independent symmetric random variables with log-concave tails as well as for chaoses of arbitrary order generated by independent symmetric exponential variables. The estimates involve only deterministic quantities and are optimal up to constants depending only on the order of the chaos variable.\\

Keywords: Polynomial chaoses, tail and moment estimates, metric entropy \\

2010 AMS Classification: Primary 60E15, Secondary 60G15
\end{abstract}

\section{Introduction}
A (homogeneous) polynomial chaos of order $d$ is a random variable defined as
\begin{align}\label{undecoupled_chaos}
\sum_{i_1,\ldots,i_d = 1}^n a_{i_1,\ldots,i_d}X_{i_1}\cdots X_{i_d},
\end{align}
where $X_1,\ldots,X_n$ is a sequence of independent real random variables and $(a_{i_1,\ldots,i_d})_{1\le i_1,\ldots,i_d\le n}$ is a $d$-indexed symmetric array of real numbers, satisfying $a_{i_1,\ldots,i_d} = 0$ whenever there exists $k\neq l$ such that $i_k = i_l$.

Random variables of this type appear in many branches of modern probability, e.g. as approximations of multiple stochastic integrals, elements of Fourier expansions in harmonic analysis on the discrete cube (when the underlying variables $X_i$'s are independent Rademachers), in subgraph counting problems for random graphs (in this case $X_i$'s are zero-one random variables) or in statistical physics.

Chaoses of order one are just linear combinations of independent random variables and their behavior is  well-understood. Chaoses of higher orders behave in a more complex way as the summands in (\ref{undecoupled_chaos}) are no longer independent. Nevertheless, due to their simple algebraic structure, many counterparts of classical
results for sums of independent random variables are available. Among well known results there are Khinchine type inequalities and tail bounds involving the variance or some suprema of empirical processes (see e.g. \cite{N, G,Bon,AG,Bo} or Chapter 3 of \cite{dlPG}).

In several cases, under additional assumptions on the distribution of $X_i$'s, even more precise results are known, which give two sided estimates on moments of polynomial chaoses in terms of deterministic quantities involving only the coefficients $a_{i_1,\ldots,i_d}$ (the estimates are accurate up to a constant depending only on $d$). Examples include Gaussian chaoses of arbitrary order \cite{L2}, chaoses generated by nonnegative random variables with log-concave tails \cite{LL} and chaoses of order at most two, generated by symmeric radom variables with log-concave tail (\cite{GK} for $d=1$ and \cite{L1} for $d=2$).

The aim of this paper is to provide some extensions of these results. In particular we provide two sided estimates for moments of chaoses of order three generated by symmetric random variables with log-concave tails (Theorems \ref{lower} and \ref{upper}) and for chaoses of arbitrary order, generated by symmetric exponential variables (Theorem \ref{exponential}).

Before we formulate precisely our main results let us recall the notion of decoupled chaos and decoupling inequalities. A decoupled chaos of order $d$ is a random variable of the form
\begin{align}\label{decoupled_chaos}
\sum_{i_1,\ldots,i_d = 1}^n a_{i_1,\ldots,i_d}X_{i_1}^{1}\cdots X_{i_d}^{d},
\end{align}
where $(a_{i_1,\ldots,i_d})_{1\le i_1,\ldots,i_d \le n}$ is a $d$-indexed array of real numbers and $X_{i}^{l}$, $i = 1,\ldots,n$, $l = 1,\ldots,d$, are independent random variables.

One can easily see that each decoupled chaos can be represented in the form (\ref{undecoupled_chaos}) with a modified matrix and for suitably larger $n$. However it turns out that for the purpose of estimating tails or moments of chaoses it is enough to consider decoupled chaoses. More precisely, we have the following important result due to de la Pe\~na and Montgomery-Smith \cite{dlPMS}.

\begin{thm} Let $(a_{i_1,\ldots,i_d})_{1\le i_1,\ldots,i_d \le n}$ be a symmetric $d$-indexed array such that $a_{i_1,\ldots,i_d} = 0$ whenever there exists $k\neq l$ such that $i_k = i_l$. Let $X_1,\ldots,X_{n}$ be independent random variables and $(X_i^{j})_{1\le i \le n}$, $j=1,\ldots,d$, be independent copies of the sequence $(X_i)_{1\le i \le n}$. Then for all $t \ge 0$,
\begin{align*}
L_d^{-1}&\p\Big(\Big|\sum_{i_1,\ldots,i_d = 1}^n a_{i_1,\ldots,i_d}X_{i_1}^{1}\cdots X_{i_d}^{d}\Big|\ge L_d t\Big)\\
&\le
\p\Big(\Big|\sum_{i_1,\ldots,i_d = 1}^n a_{i_1,\ldots,i_d}X_{i_1}\cdots X_{i_d}\Big|\ge t\Big)\\
&\le L_d\p\Big(\Big|\sum_{i_1,\ldots,i_d = 1}^n a_{i_1,\ldots,i_d}X_{i_1}^{1}\cdots X_{i_d}^{d}\Big|\ge L_d^{-1}t\Big),
\end{align*}
where $L_d \in (0,\infty)$ depends only on $d$.
In particular, for all $p \ge 1$,
\begin{align*}
\tilde{L}_d^{-1}&\Big\|\sum_{i_1,\ldots,i_d = 1}^n a_{i_1,\ldots,i_d}X_{i_1}^{1}\cdots X_{i_d}^{d}\Big\|_p
\le \Big\|\sum_{i_1,\ldots,i_d = 1}^n a_{i_1,\ldots,i_d}X_{i_1}\cdots X_{i_d}\Big\|_p\\
&\le \tilde{L}_d \Big\|\sum_{i_1,\ldots,i_d = 1}^n a_{i_1,\ldots,i_d}X_{i_1}^{1}\cdots X_{i_d}^{d}\Big\|_p,
\end{align*}
where $\tilde{L}_d$ depends only on $d$.
\end{thm}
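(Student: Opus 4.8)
The natural engine here is a random partition of the index set. The plan is to introduce i.i.d.\ selectors $\delta_1,\dots,\delta_n$, uniform on $\{1,\dots,d\}$ and independent of everything else, and to set $I_k=\{i\le n:\delta_i=k\}$. Writing $\mathbf i=(i_1,\dots,i_d)$, $S=\sum_{\mathbf i}a_{\mathbf i}X_{i_1}\cdots X_{i_d}$ for the coupled chaos and $S^{\mathrm{dec}}=\sum_{\mathbf i}a_{\mathbf i}X_{i_1}^{1}\cdots X_{i_d}^{d}$ for the decoupled one, I would then consider the two partition-restricted forms
\[
S_\delta=\sum_{i_1\in I_1,\dots,i_d\in I_d}a_{\mathbf i}X_{i_1}\cdots X_{i_d},\qquad
T_\delta=\sum_{i_1\in I_1,\dots,i_d\in I_d}a_{\mathbf i}X_{i_1}^{1}\cdots X_{i_d}^{d}.
\]
Since the blocks $I_1,\dots,I_d$ are disjoint, in $S_\delta$ the factors attached to different positions involve disjoint, hence independent, variables, so that conditionally on $\delta$ one has $S_\delta\stackrel{d}{=}T_\delta$. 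The crucial structural fact, and the place where the symmetry of the array and the vanishing of the diagonal enter, is that every admissible $\mathbf i$ has pairwise distinct entries and therefore $\p_\delta(\delta_{i_1}=1,\dots,\delta_{i_d}=d)=d^{-d}$; summing over $\mathbf i$ yields the two identities
\[
\E_\delta S_\delta=d^{-d}S,\qquad \E_\delta T_\delta=d^{-d}S^{\mathrm{dec}},
\]
which will drive both inequalities.

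The third (upper) inequality, bounding the coupled tail by the decoupled one, is the easier direction. Conditionally on $\delta$, the form $S_\delta$ is a genuine decoupled chaos whose coefficient array is a subarray of that of $S^{\mathrm{dec}}$; in the centered case (to which the general case reduces) it is even a conditional expectation of $S^{\mathrm{dec}}$, obtained by integrating out the variables $X_i^k$ with $\delta_i\neq k$, so each $S_\delta$ is dominated by $S^{\mathrm{dec}}$ (immediately so for moments, by Jensen). This domination, together with $\E_\delta S_\delta=d^{-d}S$, is what makes the upper direction the straightforward one. The first (lower) inequality, bounding the decoupled tail by the coupled one, is where I expect the main obstacle. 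The symmetric step fails: $S_\delta$ is merely a \emph{subsum} of $S$ and, unlike the pair $(T_\delta,S^{\mathrm{dec}})$, it is \emph{not} a conditional expectation of $S$, because in the coupled form every position reuses the same variable $X_i$. To overcome this I would follow de la Pe\~na and Montgomery-Smith and compare the two tail probabilities \emph{directly}, conditionally on the partition, rather than passing through moments, which would forfeit the sharp tail; concretely, one shows that on $\{|S|\ge t\}$ the restricted form $|S_\delta|$ exceeds a fixed fraction of $t$ with $\delta$-probability bounded below. The tetrahedral (distinct-index) structure is essential precisely at this step, and the resulting constant $L_d$ depends only on $d$.

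Finally, the moment statement is a formal consequence of the tail statement. Using $\|Z\|_p^p=p\int_0^\infty t^{p-1}\p(|Z|>t)\,dt$ and inserting the two-sided tail comparison, the rescaling of the threshold by $L_d$ contributes a factor independent of $p$ while the prefactor contributes at most $L_d^{1/p}\le L_d$, giving
\[
\tilde L_d^{-1}\|S^{\mathrm{dec}}\|_p\le\|S\|_p\le\tilde L_d\|S^{\mathrm{dec}}\|_p
\]
with $\tilde L_d=L_d^{2}$ depending only on $d$ and not on $p$, which is the asserted ``in particular'' clause.
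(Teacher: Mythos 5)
First, a point of reference: the paper does not prove this theorem at all --- it is quoted from de la Pe\~na and Montgomery-Smith \cite{dlPMS} --- so your argument has to stand on its own, and as it stands it has a genuine gap: you conflate \emph{moment} decoupling with \emph{tail} decoupling. Your selector construction and the identities $\E_\delta S_\delta=d^{-d}S$, $\E_\delta T_\delta=d^{-d}S^{\mathrm{dec}}$, together with the observation that (for centered, integrable $X_i$) $T_\delta$ is a conditional expectation of $S^{\mathrm{dec}}$, correctly reproduce the classical argument giving $\E\Phi(d^{-d}|S|)\le\E\Phi(|S^{\mathrm{dec}}|)$ for convex nondecreasing $\Phi$. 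But this is only the moment comparison in one direction, and it does not yield the stated tail inequality: convex domination cannot be converted into $\Pr(|S|\ge t)\le L_d\Pr(|S^{\mathrm{dec}}|\ge t/L_d)$ pointwise in $t$ with a constant depending only on $d$, since from $\E(\Phi(|S|))\le\E(\Phi(L|S^{\mathrm{dec}}|))$ one can only extract bounds on quantities like $\E(L|S^{\mathrm{dec}}|-u)_+$, which do not control a single tail value of $S^{\mathrm{dec}}$ without regularity of its distribution --- and here the $X_i$ are \emph{arbitrary} independent variables, with no symmetry, centering, or even integrability assumed (so the reduction ``to the centered case'' you invoke is itself unavailable). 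The tail version is precisely the content of \cite{dlPMS}, and both directions there require dedicated tools, most notably their lemma that for i.i.d.\ $Z,Z'$ one has $\Pr(\|Z\|>t)\le 3\Pr(\|Z+Z'\|>2t/3)$, used in place of Jensen throughout.

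The treatment of the hard direction has a second, more concrete problem: the mechanism you sketch points the wrong way. Showing that on $\{|S|\ge t\}$ one has $\Pr_\delta(|S_\delta|\ge c_d t)\ge c_d'$ would give $\Pr(|S|\ge t)\le (c_d')^{-1}\Pr(|T_\delta|\ge c_d t)$, i.e.\ it bounds the \emph{coupled} tail by a (restricted) decoupled tail --- a step toward the third inequality, not the first one you attach it to; and even so it is unsubstantiated (a Paley--Zygmund argument over $\delta$ at fixed $X$ would need $\E_\delta S_\delta^2\le C_d(\E_\delta S_\delta)^2$, which fails when cancellations make $|S|$ small relative to the block subsums), and one would still have to compare tails of the sub-chaos $T_\delta$ with those of the full $S^{\mathrm{dec}}$, yet another nontrivial tail comparison. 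The actual proof of the reverse direction is where the symmetry of the array and the vanishing diagonal are used in an essential, \emph{algebraic} way, via the polarization identity
\begin{equation*}
d!\,\sum_{\mathbf i}a_{\mathbf i}X^1_{i_1}\cdots X^d_{i_d}
=\sum_{\emptyset\neq A\subset\{1,\ldots,d\}}(-1)^{d-\#A}\sum_{\mathbf i}a_{\mathbf i}\,Y^A_{i_1}\cdots Y^A_{i_d},
\qquad Y^A_i=\sum_{k\in A}X^k_i,
\end{equation*}
which writes the decoupled chaos as a signed combination of \emph{coupled} chaoses in the merged variables $Y^A$, after which the i.i.d.-sum tail lemma above transfers each term back to a chaos in the original $X_i$. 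None of this appears in your proposal. Your final tail-to-moment conversion is correct as stated, but it rests on the tail comparison you have not established.
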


If we are not interested in the values of numerical constants, the above theorem reduces estimation of tails and moments of general chaoses of order $d$ to decoupled chaoses. The importance of this result stems from the fact that the latter can be treated conditionally as chaoses of smaller order, which allows for induction with respect to $d$.
Since the reduction is straightforward, in the sequel when formulating our results we will restrict to the decoupled case.

Let us finish the introduction by remarking that two-sided bounds on moments of chaoses of the form (\ref{undecoupled_chaos}) can be used to give two-sided estimates for more general random variables, i.e. tetrahedral polynomials in $X_1,\ldots,X_d$, e.g. to polynomials in which every variable appears in a power at most $1$. This is thanks to the following simple observation, which to our best knowledge has remained unnoticed.

\begin{prop} \label{tetrahedral_prop} For $j = 0,1,\ldots,d$ let $(a_{i_1,\ldots,i_j}^{j})_{1\le i_1,\ldots,i_j \le n}$ be a $k$-indexed symmetric array
of real numbers (or more generally elements of some normed space), such that $a_{i_1,\ldots,i_j}^{j} = 0$ if $i_k = i_l$ for some $1 \le k < l \le j$ (for $j=0$ we have just a single number $a_\emptyset^{{0}}$). Let $X_1,\ldots,X_n$ be independent mean zero random variables. Then there exists a constant $L_d \in (0,\infty)$, depending only on $d$, such that for all $p \ge 1$,
\begin{displaymath}
\sum_{j=0}^d \Big\|\sum_{i_1,\ldots,i_j=1}^n a_{i_1,\ldots,i_j}^{j}X_{i_1}\cdots X_{i_j}\Big\|_p \le
L_d \Big\|\sum_{j=0}^d\sum_{i_1,\ldots,i_j=1}^n a_{i_1,\ldots,i_j}^{j}X_{i_1}\cdots X_{i_j}\Big\|_p.
\end{displaymath}
\end{prop}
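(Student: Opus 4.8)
\section*{Proof proposal}

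The plan is to realize each homogeneous component as the output of a linear operator that is a \emph{contraction} on $L_p$, so that its $L_p$-norm is automatically controlled by $\|P\|_p$, where I write $P = \sum_{j=0}^d P_j$ with $P_j = \sum_{i_1,\ldots,i_j} a^{j}_{i_1,\ldots,i_j}X_{i_1}\cdots X_{i_j}$. Since there are only $d+1$ summands on the left-hand side, it suffices to produce, for each fixed $j$, a bound $\|P_j\|_p \le C_d\|P\|_p$ with $C_d$ depending only on $d$, and then sum. The key point is that the scalar that multiplies a monomial under my operator will depend only on the \emph{degree} of that monomial, so that the whole family of operators acts on $P$ as a single polynomial in one real parameter whose coefficients are exactly the $P_j$.

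For $i=1,\ldots,n$ let $E_i$ denote the conditional expectation that integrates out $X_i$, i.e. $E_i f = \E[f\mid (X_k)_{k\neq i}]$; this is a contraction on $L_p$ (and, in the normed-space-valued case, on the corresponding Bochner space, since we may work inside the finite-dimensional span of the coefficients). Using that the $X_i$ are independent and mean zero together with the tetrahedral structure $a^{j}_{i_1,\ldots,i_j}=0$ when two indices coincide, one checks that on a monomial $X_S=\prod_{i\in S}X_i$ (with $S$ a set of $|S|$ distinct indices) the operator $E_i$ fixes $X_S$ if $i\notin S$ and annihilates it if $i\in S$. Consequently, for a scalar $t$, each factor of $T_t := \prod_{i=1}^n\bigl(t\,\mathrm{Id}+(1-t)E_i\bigr)$ maps the span of $X_S$ into itself, multiplying it by $1$ when $i\notin S$ and by $t$ when $i\in S$; hence $T_t X_S = t^{|S|}X_S$ and therefore
\[
T_t P = \sum_{j=0}^d t^{\,j} P_j .
\]
Crucially, for $t\in[0,1]$ the factor $t\,\mathrm{Id}+(1-t)E_i$ is a convex combination of two contractions, hence a contraction, so $T_t$ is a product of operators of norm at most $1$ and thus satisfies $\|T_t P\|_p \le \|P\|_p$ for every $t\in[0,1]$ and every $p\ge 1$.

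It remains to recover the individual $P_j$ from the values $T_{t}P$. Viewing $t\mapsto T_tP=\sum_{j=0}^d t^{j}P_j$ as a vector-valued polynomial of degree $d$, I fix $d+1$ distinct nodes $t_0,\ldots,t_d\in[0,1]$ (say $t_k=k/d$); the Vandermonde matrix $(t_k^{\,j})_{0\le j,k\le d}$ is invertible, so Lagrange interpolation yields coefficients $c_{j,k}$, depending only on $d$, with $P_j=\sum_{k=0}^d c_{j,k}\,T_{t_k}P$. Then $\|P_j\|_p\le\bigl(\sum_k|c_{j,k}|\bigr)\|P\|_p$, and summing over $j=0,\ldots,d$ gives the claim with $L_d=\sum_{j,k}|c_{j,k}|$.

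The substantive step — and the reason the constant depends only on $d$ and not on $p$ — is the choice to scale with a parameter $t\in[0,1]$ rather than with random signs: the operators $t\,\mathrm{Id}+(1-t)E_i$ are genuine $L_p$-contractions for \emph{all} $p\ge1$, whereas the sign operator $2E_i-\mathrm{Id}$ is only a contraction on $L_2$. I expect the only points needing care to be the verification of the monomial action of $E_i$ (where the mean-zero and tetrahedral hypotheses enter) and the bookkeeping showing the interpolation coefficients are absolute functions of the nodes; both are routine once the operator $T_t$ is set up.
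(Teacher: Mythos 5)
Your proof is correct, and it takes a genuinely different route from the paper's. The paper symmetrizes the tetrahedral polynomial into a $U$-statistic with kernels $H_{i_1,\ldots,i_d}$, applies the de la Pe\~na decoupling inequality to pass to independent copies $X^{r_1},\ldots,X^{r_j}$, and then runs an induction on $d$ in which the homogeneous pieces of the decoupled polynomial are separated by conditional Jensen's inequality (integrating out one coordinate array at a time, using mean zero), finishing with the elementary observation that $\|x\|\le K\|x+y\|$ implies $\|x\|+\|y\|\le(2K+1)\|x+y\|$. You avoid decoupling entirely: the operators $E_i$ do act diagonally on tetrahedral monomials exactly as you claim --- this is where the mean-zero and no-repeated-index hypotheses enter, since $E_iX_i^2=\E X_i^2\neq 0$ would destroy the diagonal action --- so $T_tP=\sum_{j=0}^d t^jP_j$, each factor $t\,\mathrm{Id}+(1-t)E_i$ is a convex combination of $L_p$-contractions for $t\in[0,1]$, and the inverse-Vandermonde (Lagrange) coefficients recover each $P_j$ with constants depending only on $d$. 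The minor points you flag do check out: each monomial is in $L_1$ by independence and integrability of the $X_i$, so the identity $T_tP=\sum_j t^jP_j$ holds in $L_1$ while the contraction estimate lives in $L_p$, and interpolation then shows a posteriori that $P_j\in L_p$ whenever $P\in L_p$ (if $\|P\|_p=\infty$ there is nothing to prove); in the vector-valued case one works in the finite-dimensional span of the coefficients, where conditional expectation is an $L_p$-contraction by convexity of the norm. Your argument buys elementarity, an explicit constant $L_d$, and it does not even use the symmetry of the arrays $a^j$, which the paper needs in order to form the $U$-statistic kernel; the paper's route, in exchange, passes through the decoupled formulation, which is the form of the inequality actually exploited elsewhere in the paper, and yields the decoupled analogue along the way. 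Your multiplier-plus-interpolation trick is classical (it is close in spirit to Kwapie\'n's approach to decoupling for chaoses), but as written it constitutes a complete and valid proof of the proposition.
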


Note that a reverse inequality boils down just to the triangle inequality in $L_p$ and so the above proposition immediately gives two-sided estimates of moments of tetrahedral polynomials from estimates for homogeneous chaoses.
Since the details are straightforward we will not state explicitly the results which can be obtained from the inequalities we present. The easy (given general results on decoupling) but notationally involved proof of Proposition \ref{tetrahedral_prop} is deferred to the appendix.

The organization of the article is as follows. After introducing the necessary notation (Section \ref{Def_not}) we state our main results (Section \ref{Main_results}) and devote the rest of the paper to their quite involved proof.
In the course of the proof we provide entropy estimates for special kinds of metrics on subsets of certain product sets (Section \ref{Entropy_subsection}) as well as bounds on empirical processes indexed by such sets (Section \ref{Section_Gaussian} and Section \ref{partition_section} where we also provide some partition theorems). We believe that these results may be of independent interest. In Section \ref{proof_section} we conclude the proof of our result for chaoses of order three and in Section \ref{proof_section_exp} we give a proof of estimates for chaoses of arbitrary order generated by exponential variables.

\section{Definitions and notation \label{Def_not}}

Let $(X_{i}^{j})_{1\leq i\leq n,1\leq j\leq d}$ be a matrix of independent
symmetric random variables with
logarithmically concave tails, i.e.\ such that the functions
$N_{i}^{j}\colon [0,\infty) \\ \rightarrow[0,\infty]$ defined by
\[
N_{i}^{j}(t)=-\ln \Pr(|X_{i}^{j}|\geq t)
\]
are convex. We assume that r.v.'s are normalized in such a way that
\begin{equation}
\label{norm}
\inf\{t\geq 0\colon N_{i}^{j}(t)\geq 1\}=1.
\end{equation}
We set
\[
\hat{N}_{i}^{j}(t)=\left\{\begin{array}{ll}
t^2 & \mbox{ for }|t|\leq 1
\\
N_i^{j}(|t|) &\mbox{ for }|t|>1.
\end{array}
\right.
\]

\paragraph{Remark} When working with $d=1$ we will suppress the upper index $j$ and write simply $X_i$ or $N_i$.
\medskip

Recall that the $p$-th moment of a real random variable $X$ is defined as $\|X\|_p^p = \E|X|^p$.

For $\bfi\in \{1,\ldots,n\}^d$ and $I\subset\{1,\ldots,d\}$ we write
$\bfi_{I}=(i_{k})_{k\in I}$. By $P_d$ we will denote the family of all partitions
of $\{1,\ldots,d\}$ into nonempty, pairwise disjoint subsets.
For ${\cal J}=\{I_1,\ldots,I_k\}\in P_d$, $p\geq 2$ and a multiindexed
matrix $(a_{\bfi})$ we define
\begin{align*}
&\|(a_{\bfi})\|_{{\cal J},p}^{{\cal N}}
\\
&=
\sum_{s_1\in I_1,\ldots,s_k\in I_k}\sup\Big\{\sum_{\bfi} a_{\bfi}\prod_{l=1}^k x_{\bfi_{I_l}}^l
\colon  \sum_{i_{s_l}}\hat{N}_{i_{s_l}}^{s_l}(\|(x_{\bfi_{I_l}}^{l})_{\bfi_{I_l\setminus\{s_l\}}}\|_2)\leq p,
1\leq l\leq k\Big\}.
\end{align*}

\paragraph{Remark} When $I_l$ is a singletone, i.e. $I_l = \{s_l\}$, then for any fixed value of $i_{s_l}$,
$\|(x_{\bfi_{I_l}}^{l})_{\bfi_{I_l\setminus\{s_l\}}}\|_2 = |x_{i_{s_l}}^l|$.

\medskip
In particular for $d=3$ we have
\begin{align*}
\|(a_{ijk})\|_{\{1,2,3\},p}^{\cal N}=&
\sup
\bigg\{\sum_{ijk}a_{ijk}x_{ijk}\colon
\sum_i \hat{N}_i^{1}\bigg(\sqrt{\sum_{j,k}x_{ijk}^2}\bigg)\leq p\bigg\}
\\
&+\sup
\bigg\{\sum_{ijk}a_{ijk}x_{ijk}\colon
\sum_j \hat{N}_j^{2}\bigg(\sqrt{\sum_{i,k}x_{ijk}^2}\bigg)\leq p\bigg\}
\\
&+\sup
\bigg\{\sum_{ijk}a_{ijk}x_{ijk}\colon
\sum_k \hat{N}_k^{3}\bigg(\sqrt{\sum_{i,j}x_{ijk}^2}\bigg)\leq p\bigg\},
\end{align*}
\begin{align*}
\|(a_{ijk})\|&_{\{1,2\}\{3\},p}^{\cal N}
\\
=&\sup\bigg\{\sum_{ijk}a_{ijk}x_{ij}y_k\colon
\sum_i \hat{N}_i^{1}\bigg(\sqrt{\sum_{j}x_{ij}^2}\bigg)\leq p,
\sum_k\hat{N}_k^{3}(y_k)\leq p\bigg\}
\\
&+\sup\bigg\{\sum_{ijk}a_{ijk}x_{ij}y_k\colon
\sum_j \hat{N}_j^{2}\bigg(\sqrt{\sum_{i}x_{ij}^2}\bigg)\leq p,
\sum_k\hat{N}_k^{3}(y_k)\leq p\bigg\}
\end{align*}
and
\begin{align*}
\|(a&_{ijk})\|_{\{1\}\{2\}\{3\},p}^{\cal N}
\\
&=
\sup\Big\{\sum_{ijk}a_{ijk}x_{i}y_jz_k\colon
\sum_i \hat{N}_i^{1}(x_{i})\leq p,\sum_j \hat{N}_j^{2}(y_{j})\leq p,
\sum_k\hat{N}_k^{3}(z_k)\leq p\Big\}.
\end{align*}

Throughout the article we will write $L_d,L$ to denote constants depending only on $d$ and
universal constants respectively. In all cases the values of a constant may differ
at each occurence.

By $A \sim_d B$ we mean that there exists a constant $L_d \in (0,\infty)$, such that $L_d^{-1}B \le A \le L_d B$.

We will also denote $X^j = (X_i^j)_{1\le i\le n}$ and write $\E_j$ for the expectation with respect to $X^j$.

\section{Main results \label{Main_results}}

\begin{thm}
\label{lower}
For any $d\ge 1$ and $p\geq 2$ we have
\begin{equation}
\label{lowermom}
\Big\|\sum_{\bfi}a_{\bfi}X_{i_1}^{1}\cdots X_{i_d}^{d}\Big\|_p\geq
\frac{1}{L_{d}}\sum_{{\cal J}\in P_d}\|(a_{\bfi})\|_{{\cal J},p}^{{\cal N}}.
\end{equation}
\end{thm}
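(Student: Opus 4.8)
The goal is to prove the lower bound
\[
\Big\|\sum_{\bfi}a_{\bfi}X_{i_1}^{1}\cdots X_{i_d}^{d}\Big\|_p\geq
\frac{1}{L_{d}}\sum_{{\cal J}\in P_d}\|(a_{\bfi})\|_{{\cal J},p}^{{\cal N}},
\]
so it suffices to bound the $L_p$ norm from below by $L_d^{-1}\|(a_{\bfi})\|_{{\cal J},p}^{{\cal N}}$ for each fixed partition ${\cal J}=\{I_1,\ldots,I_k\}$ separately, since there are only $L_d$ many partitions. The plan is to reduce the general decoupled chaos to a chaos of order $k$ (the number of blocks of ${\cal J}$) whose generating variables are themselves suprema over the inner indices, and then estimate that $k$-th order object from below.

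First I would exploit the product structure of the chaos relative to the blocks of ${\cal J}$. Fix, for each block $I_l$, a distinguished index $s_l\in I_l$. The key mechanism is to use symmetry and a comparison of the tails of $X_{i_{s_l}}^{s_l}$ with the moment condition appearing in the definition of $\|\cdot\|_{{\cal J},p}^{{\cal N}}$. Concretely, the normalization \eqref{norm} together with the log-concavity of the tails gives, via the one-dimensional estimates of Gluskin--Kwapie\'n type (the case $d=1$ recalled in the introduction, \cite{GK}), that for a single block the $L_p$ norm of a linear form $\sum_{i} c_i X_i$ is comparable to $\sup\{\sum_i c_i x_i:\sum_i \hat N_i(x_i)\le p\}$. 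I would apply this blockwise and conditionally: freezing the variables outside a given block, the chaos is a linear form in the variables indexed by $s_l$, whose coefficients are themselves of the form $\sum_{\bfi_{I_l\setminus\{s_l\}}}(\cdots)$, and the $\ell_2$ norm of these coefficients is exactly the quantity $\|(x_{\bfi_{I_l}}^{l})_{\bfi_{I_l\setminus\{s_l\}}}\|_2$ that enters the constraint. This explains why the definition of the norm pairs each $s_l$ with an $\ell_2$-ball in the remaining coordinates of its block.

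The core of the argument is then an iterative/inductive lower bound: I would select an extremal (or near-extremal) sequence $(x^l)$ achieving the supremum in $\|(a_{\bfi})\|_{{\cal J},p}^{{\cal N}}$, and use it to build a test event on which the chaos is large. The log-concavity of $N_i^j$ lets one control, with probability bounded below by a constant depending only on $d$, the simultaneous event that each $|X_{i_{s_l}}^{s_l}|$ is comparable to the corresponding coordinate dictated by the extremizer under the constraint $\sum_{i_{s_l}}\hat N_{i_{s_l}}^{s_l}(\cdots)\le p$. On this event the decoupled product $\prod_l X_{i_{s_l}}^{s_l}$ reproduces, up to a constant, the supremum value, and I would use the symmetry of the variables together with a contraction/Jensen-type argument over the non-distinguished indices in each block to pass from the chaos to the linear functional evaluated at the extremizer. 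Taking $p$-th moments and using that the controlling event has probability at least $e^{-Lp}$ (or a constant, depending on the regime $|t|\le 1$ versus $|t|>1$ encoded in $\hat N$) yields the claimed lower bound for the fixed partition.

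The main obstacle I anticipate is the bookkeeping of the nested $\ell_2$ structure across blocks of different sizes, and in particular handling the two regimes of $\hat N$ (quadratic for small arguments, $N_i^j$ for large) uniformly: one must ensure that the constant $L_d$ does not degrade when some blocks contribute through their Gaussian-like small-deviation behavior while others contribute through genuine large-deviation tails. Making the blockwise conditioning rigorous — justifying that freezing outside a block and applying the $d=1$ result does not lose the correct $\ell_2$ normalization, and that the resulting lower bounds for distinct blocks can be combined multiplicatively rather than merely additively — is where the careful work lies; this is precisely the point at which the symmetry assumption and the independence across the superscripts $j$ in the decoupled model are essential.
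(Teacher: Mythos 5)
Your overall reduction (treating one partition ${\cal J}=\{I_1,\ldots,I_k\}$ at a time, since $\#P_d\le L_d$) matches the paper, but the core mechanism you propose has a genuine flaw. You claim that log-concavity yields a test event of probability at least $e^{-Lp}$ on which each $|X^{s_l}_{i_{s_l}}|$ is comparable to the corresponding coordinate of a near-extremizer subject to $\sum_{i_{s_l}}\hat N^{s_l}_{i_{s_l}}(\cdot)\le p$. This fails in the quadratic regime of $\hat N$: the constraint $\sum_i \hat N_i(x_i)\le p$ permits extremizers supported on $m\gg p$ coordinates with all $|x_i|\le 1$ (e.g.\ $x_i=\sqrt{p/m}$), and the probability that all the corresponding $|X_i|$ simultaneously exceed $x_i$ decays like a product over $m$ terms each bounded away from $1$ --- of order $e^{-c\sqrt{pm}}$ for Gaussian-type variables --- which is far smaller than $e^{-Lp}$ when $m$ is large. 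The Gaussian-like part of the norm is not produced by a pointwise event at all; it must come from an $L_2$-comparison argument. The paper does exactly this in the single-block case $k=1$: by symmetry and Jensen's inequality it replaces the inner factor $\E_{\{l\}^c}\big|\sum_{\bfi_{\{l\}^c}} a_\bfi\prod_{k\neq l}X^k_{i_k}\big|$ by $\big(\sum_{\bfi_{\{l\}^c}}a_\bfi^2\big)^{1/2}$ up to a constant via hypercontractivity of chaoses and the contraction principle, and only then invokes the $d=1$ (Gluskin--Kwapie\'n) bound followed by Cauchy--Schwarz. No event of probability $e^{-Lp}$ can substitute for that step, and your parenthetical acknowledgment of the two regimes of $\hat N$ does not supply the missing argument.

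The second gap is that the multiplicative combination across blocks --- which you correctly flag as ``where the careful work lies'' --- is in fact the entire content of the proof for $k\ge 2$, and the extremizer/test-event scheme offers no mechanism for it. The paper's route is an induction on $d$ applied \emph{twice}: first conditionally on $(X^j)_{j\in I_1}$, which lower-bounds the chaos by the $L_p$ norm of the random quantity $\big\|\big(\sum_{\bfi_{I_1}}a_\bfi\prod_{r\in I_1}X^r_{i_r}\big)_{\bfi_{I_1^c}}\big\|_{{\cal J}\setminus I_1,p}^{{\cal N}'}$; then an interchange of supremum and expectation ($\E\sup\ge\sup\E$ over fixed admissible test vectors $x^2,\ldots,x^k$); and finally a second application of the induction hypothesis to the resulting order-$\#I_1$ chaos with frozen coefficients, which produces the constraint indexed by $s_1$. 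Without this interchange, ``freezing the variables outside a given block'' gives conditional bounds that you cannot legitimately multiply together; with it, no extremizer or test event is needed at all. As written, your proposal is a plan whose two load-bearing steps --- the small-coordinate regime and the cross-block combination --- are respectively incorrect and absent.
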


\begin{thm}
\label{upper}
For $d\leq 3$ and $p\geq 2$,
\begin{equation}
\label{uppermom}
\Big\|\sum_{\bfi}a_{\bfi}X_{i_1}^{1}\cdots X_{i_d}^{d}\Big\|_p\leq
L_{d}\sum_{{\cal J}\in P_d}\|(a_{\bfi})\|_{{\cal J},p}^{{\cal N}}.
\end{equation}
\end{thm}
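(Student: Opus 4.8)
The plan is to establish the upper bound by induction on the order $d$, taking $d=1$ (the Gluskin--Kwapie\'n estimate, \cite{GK}) and $d=2$ (Lata\l a's estimate, \cite{L1}) as the base cases; both of these produce exactly the quantity $\sum_{{\cal J}\in P_d}\|(a_{\bfi})\|_{{\cal J},p}^{\cal N}$ for $d\le 2$. Having reduced to the decoupled chaos $S=\sum_{\bfi}a_{\bfi}X_{i_1}^1X_{i_2}^2X_{i_3}^3$, the idea for $d=3$ is to peel off the last coordinate by conditioning. Freezing $X^3$ and writing $b_{ij}=\sum_k a_{ijk}X_k^3$, I would regard $(b_{ij})$ as the coefficient array of an order-$2$ decoupled chaos in $(X^1,X^2)$, apply the $d=2$ upper bound conditionally, and then take the $L_p(X^3)$-norm, which after the triangle inequality gives
\[
\|S\|_p\le L_2\sum_{{\cal J}\in P_2}\big\|\,\|(b_{ij})\|_{{\cal J},p}^{\cal N}\,\big\|_{L_p(X^3)}.
\]

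The key observation is that each order-$2$ norm is a supremum of a process that is \emph{linear} in $X^3$. Indeed, for ${\cal J}\in P_2$ the definition of $\|\cdot\|_{{\cal J},p}^{\cal N}$ gives $\|(b_{ij})\|_{{\cal J},p}^{\cal N}=\sup_{(x)\in T_{\cal J}}\sum_{ij}b_{ij}x_{ij}=\sup_{(x)\in T_{\cal J}}\sum_k c_k(x)X_k^3$, where $c_k(x)=\sum_{ij}a_{ijk}x_{ij}$ and $T_{\cal J}$ is the \emph{deterministic} constraint set built from the functions $\hat N^1,\hat N^2$. Consequently the whole problem reduces to bounding $L_p(X^3)$-moments of suprema of linear processes
\[
\Big\|\sup_{(x)\in T}\sum_k c_k(x)X_k^3\Big\|_{L_p(X^3)}
\]
driven by the symmetric log-concave variables $X_k^3$ and indexed by the sets $T=T_{\cal J}$.

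The main obstacle, where essentially all of the work lies, is to bound these suprema and to identify the resulting bound with the order-$3$ norms. Since the $X_k^3$ are only log-concave and not Gaussian, I cannot use rotational invariance; instead I would first pass through a Gaussian comparison (Section~\ref{Section_Gaussian}) and then run a chaining argument governed by the metric entropy of $T$ measured in the mixed $\ell_2$- and $N$-type metrics analysed in Section~\ref{Entropy_subsection}. Feeding these entropy estimates into the partition theorems of Section~\ref{partition_section} yields a majorizing-measure-type bound for the displayed quantity. The delicate point is to show that every term produced by this chaining/partition scheme is dominated by one of the deterministic quantities $\|(a_{ijk})\|_{{\cal J}',p}^{\cal N}$ with ${\cal J}'\in P_3$: the single block $\{1,2,3\}$, the pair-singleton partitions such as $\{1,2\}\{3\}$, and the full splitting $\{1\}\{2\}\{3\}$ each surface at a different scale of the scheme. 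This matching is precisely what confines the argument to $d\le 3$, since the partition machinery is only shown to close up on the available norms at this order.

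Finally I would assemble the estimate by summing the contributions over ${\cal J}\in P_2$ and tracking how each feeds into the $P_3$ norms. The genuinely one-dimensional marginal terms are controlled by the $d=1$ estimate, and the degenerate regimes in which a constraint set collapses to an $\ell_2$-ball reduce to the Gaussian chaos bound of \cite{L2}. Combining all of these contributions gives $\|S\|_p\le L_3\sum_{{\cal J}\in P_3}\|(a_{ijk})\|_{{\cal J},p}^{\cal N}$, which is the desired inequality for $d=3$.
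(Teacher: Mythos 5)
Your skeleton does match the paper's up to the reduction: induct with base cases $d=1,2$, condition on $X^3$, apply the $d=2$ bound to $b_{ij}=\sum_k a_{ijk}X_k^3$, and observe that each order-two norm $\|(b_{ij})\|_{{\cal J},p}^{{\cal N}'}$ is a supremum of a process \emph{linear} in $X^3$ over a deterministic set $T_{\cal J}$. But there is a genuine gap at the very next step. You propose to bound the $L_p(X^3)$-moments of these suprema directly by Gaussian comparison plus entropy-driven chaining through the partition theorems. The machinery of Sections \ref{Entropy_subsection}--\ref{partition_section} controls only \emph{expectations} of such suprema; chaining bounds of this type give $\E\sup$, not $\|\sup\|_p$, and no amount of entropy of $T_{\cal J}$ alone can produce the $p$-dependent deterministic terms. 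The missing ingredient is the concentration result for suprema of linear combinations of log-concave variables, Lemma \ref{tail_est_log_conc} (Theorem 1 in \cite{L3}): it gives
$\big\|\sup_{t\in T}\big|\sum_k t_k X^3_k\big|\big\|_p \le L\big(\E\sup_{t\in T}\big|\sum_k t_k X^3_k\big| + \sup\{\sum_k t_k z_k\colon t\in T,\ \sum_k \hat N_k^3(z_k)\le p\}\big)$.
Applied with $T=\{(c_k(x))_k\colon x\in T_{\cal J}\}$, the shift term is \emph{exactly} $\|A\|_{\{1,2\}\{3\},p}^{\cal N}$, respectively $\|A\|_{\{1\}\{2\}\{3\},p}^{\cal N}$ — these two $P_3$-norms enter the estimate immediately from concentration, before any entropy work, not ``at a different scale of the chaining scheme'' as you suggest. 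Without this reduction your scheme does not close: a majorizing-measure-type bound for $p$-th moments of such suprema in terms of the ${\cal N}$-norms is precisely what is unavailable (and is essentially the obstruction for $d>3$), so asserting it as the engine of the proof assumes what needs to be proved.

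Once that step is in place, what remains is to bound the two expectations $\E\|(\sum_k a_{ijk}Z_k)_{ij}\|_{\{1,2\},p}^{{\cal N}'}$ and $\E\|(\sum_k a_{ijk}Z_k)_{ij}\|_{\{1\}\{2\},p}^{{\cal N}'}$, which is the content of Lemmas \ref{almost_main_lemma} and \ref{main_lemma}. Your description fits only the second: there the paper uses a single-scale partition of $A_p^1\times A_p^2$ (Corollary \ref{4th_moment}, built from Proposition \ref{crucial} and Theorem \ref{universal_partition}), glues the pieces with Lemma \ref{union_of_sets}, and handles the shifted centers with the $d=2$ result of \cite{L1} (Lemma \ref{suprad1}) — a one-step partition, not iterated chaining. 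The first expectation is handled quite differently, by a bespoke decomposition of the ball $\{t\colon\sum_i \hat N_i^1(t_i)\le p\}$ into an $\ell_2$-type part, a sparse large-coefficient part controlled by support counting together with Lemma \ref{union_of_sets}, and an intermediate part treated by fourth-moment estimates. These are details one could plausibly supply, but the substantive missing idea in your proposal is the concentration reduction of $L_p$-norms to first moments plus the deterministic $P_3$-terms described above.
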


\begin{rem}
Let $X_i^j=cg_i^j$, where $g_i^j$ are i.i.d. ${\cal N}(0,1)$ r.v's. and $1<c<10/9$ is such
constant that normalization \eqref{norm} holds. Then $t^2/L\leq \hat{N}_{i}^{j}(t)\leq Lt^2$ and
for ${\cal J}=\{I_1,\ldots,I_k\}\in P_d$, $p\geq 2$
\[
\|(a_{\bfi})\|_{{\cal J},p}^{{\cal N}}\sim_d p^{k/2}\|(a_{\bfi})\|_{{\cal J}},
\]
where
\[
\|(a_{\bfi})\|_{{\cal J}}=\sup\Big\{\sum_{\bfi} a_{\bfi}\prod_{l=1}^k x_{\bfi_{I_l}}^l\colon
\|x_{\bfi_{I_l}}^l\|_2\leq 1, 1\leq l\leq k \Big\}.
\]
Theorems \ref{lower} and \ref{upper} (for arbitrary $d$) in this case were established in \cite{L2}.
\end{rem}

A standard application of the Paley-Zygmund inequality (see e.g. Corollary 3.3.2. of \cite{dlPG}) and the fact that $p$-th and $2p$-th moments of chaoses generated by random variables with log-concave tails are comparable up to constants depending only on the order of the chaos yield the following corollary (for details see the proof of Corollary 1 in \cite{L2}).

\begin{cor}
For $d\leq 3$ and $t>0$,
\[
\frac{1}{L_d}e^{-t/L_d}\leq \Pr\Big(\Big|\sum_{\bfi}a_{\bfi}X_{i_1}^{1}\cdots X_{i_d}^{d}\Big|
\geq \sum_{{\cal J}\in P_d}\|(a_{\bfi})\|_{{\cal J},t}^{{\cal N}}\Big)
\leq L_de^{-tL_d}.
\]
\end{cor}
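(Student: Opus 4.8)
The plan is to deduce the tail bound from the two-sided moment comparison of Theorems \ref{lower} and \ref{upper} together with the Paley--Zygmund inequality, following the scheme of the proof of Corollary~1 in \cite{L2}. Throughout write $S=\sum_\bfi a_\bfi X_{i_1}^1\cdots X_{i_d}^d$ and $M_p=\sum_{{\cal J}\in P_d}\|(a_\bfi)\|_{{\cal J},p}^{{\cal N}}$, so that by the two theorems $\|S\|_p\sim_d M_p$ for all $p\ge 2$ and $d\le 3$.

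First I would record the regularity that makes everything work, namely the comparison of consecutive moments $\|S\|_{2p}\le L_d\|S\|_p$ for $p\ge 2$. Via $\|S\|_p\sim_d M_p$ this reduces to the deterministic statement $M_{2p}\le L_d M_p$, which I would obtain directly from the definition of $\|\cdot\|_{{\cal J},p}^{{\cal N}}$. Indeed, each $\hat N_i^j$ is convex with $\hat N_i^j(0)=0$, so $\hat N_i^j(\tfrac12 s)\le\tfrac12\hat N_i^j(s)$; hence scaling every block of a configuration that is feasible at level $2p$ by the factor $\tfrac12$ yields a configuration feasible at level $p$. As this scales the $k$-linear form $\sum_\bfi a_\bfi\prod_l x_{\bfi_{I_l}}^l$ by $2^{-k}$ with $k=|{\cal J}|\le d$, we get $\|(a_\bfi)\|_{{\cal J},2p}^{{\cal N}}\le 2^d\|(a_\bfi)\|_{{\cal J},p}^{{\cal N}}$ and therefore $M_{2p}\le 2^d M_p$.

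The upper estimate of the corollary then comes from Chebyshev's inequality applied to $|S|^p$, giving $\p(|S|\ge e\|S\|_p)\le e^{-p}$ for every $p\ge 2$: choosing $p$ comparable to $t$ and using $\|S\|_p\sim_d M_p$ together with the doubling $M_{2p}\le 2^d M_p$ to rewrite the threshold $e\|S\|_p$ as a fixed multiple of $M_t$ produces the claimed exponential upper bound. For the lower estimate I would use Paley--Zygmund for the nonnegative variable $|S|^p$: with $\theta=\tfrac12$,
\[
\p\Big(|S|\ge\tfrac12\|S\|_p\Big)\ge\frac14\,\frac{\|S\|_p^{2p}}{\|S\|_{2p}^{2p}}\ge\frac14\,L_d^{-2p},
\]
where the last inequality is exactly the moment comparison established above. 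Again taking $p\sim_d t$ and converting $\tfrac12\|S\|_p$ into a multiple of $M_t$ yields the exponential lower bound on the probability.

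The step I expect to be most delicate is not either probabilistic inequality --- both are one line --- but the bookkeeping that matches the deterministic threshold $M_t$ with an exponent proportional to $t$. Since $M_p$ may grow only slowly in $p$ (for very light log-concave tails the ratio $M_{2p}/M_p$ is close to $1$), passing from a threshold that is a fixed constant multiple of $M_t$ to the threshold $M_t$ itself, while keeping the exponent proportional to $t$, is precisely where the doubling property $M_{2p}\le L_d M_p$ and the monotonicity of $p\mapsto M_p$ must be invoked with care. This is the part carried out in detail in \cite{L2}, and the same argument applies here once $\|S\|_p\sim_d M_p$ and the moment comparison are in hand.
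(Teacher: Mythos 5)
Your proposal is correct and takes essentially the same route the paper prescribes: Paley--Zygmund applied to $|S|^p$ combined with the comparability of $p$-th and $2p$-th moments, the latter reduced via Theorems \ref{lower} and \ref{upper} to the deterministic doubling $\|(a_\bfi)\|_{{\cal J},2p}^{{\cal N}}\le 2^{\#{\cal J}}\|(a_\bfi)\|_{{\cal J},p}^{{\cal N}}$ (this is exactly the unnamed lemma in Section \ref{proof_section}), with the threshold bookkeeping deferred to the proof of Corollary 1 in \cite{L2}, just as the paper does. One minor inaccuracy in your justification: $\hat{N}_i^j$ itself need not be convex (e.g.\ for $N(t)=t$ it is quadratic on $[0,1]$ and linear beyond, so convexity fails at $t=1$), but the inequality $\hat{N}_i^j(s/2)\le\tfrac12\hat{N}_i^j(s)$ you actually use is precisely the paper's inequality (\ref{no_abs}), which follows from convexity of $N_i^j$ together with the normalization giving $N_i^j(s)\ge s$ for $s\ge 1$, so the argument stands as written.
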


We are not able to show Theorem \ref{upper} for $d>3$ in the general case. However
we know that it holds for exponential random variables.

\begin{thm}\label{exponential}
If $N_i^j(t)=t$ for all $i,j$ and $t>0$ then for any $d$ and $p\geq 2$ the estimate
\eqref{uppermom} holds.
\end{thm}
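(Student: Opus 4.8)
The plan is to prove Theorem \ref{exponential} by induction on the order $d$, exploiting the decoupled structure so that the chaos can be treated conditionally as a chaos of lower order. The lower bound \eqref{lowermom} is already available from Theorem \ref{lower} for all $d$, so the entire task is the matching upper bound \eqref{uppermom} when $N_i^j(t)=t$, i.e.\ when the $X_i^j$ are symmetric exponential (Laplace) variables. First I would fix the last coordinate: conditionally on $(X_i^j)_{j\le d-1}$, the variable $\sum_{\bfi}a_{\bfi}X_{i_1}^1\cdots X_{i_d}^d$ is a linear combination $\sum_{i_d} b_{i_d} X_{i_d}^d$ of independent symmetric exponentials, where $b_{i_d}=\sum_{\bfi_{\{1,\ldots,d-1\}}}a_{\bfi}X_{i_1}^1\cdots X_{i_{d-1}}^{d-1}$. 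For such one-dimensional sums the exact two-sided moment bound (the $d=1$ case, essentially the Gluskin--Kwapie\'n result \cite{GK}) gives
\[
\Big\|\sum_{i_d}b_{i_d}X_{i_d}^d\Big\|_p \sim \sqrt{p}\,\Big(\sum_{i_d}b_{i_d}^2\Big)^{1/2}+p\max_{i_d}|b_{i_d}|,
\]
which for the exponential case is a clean deterministic formula in terms of the $\ell_2$ and $\ell_\infty$ norms of $(b_{i_d})$.

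The core of the argument is then to integrate this conditional bound in the remaining variables. Using $\|\,\|_p$ on both sides and the triangle inequality, I would reduce to estimating $\big\|(\sum_{i_d}b_{i_d}^2)^{1/2}\big\|_p$ and $\big\|\max_{i_d}|b_{i_d}|\big\|_p$. The first term is itself (the square root of) a chaos-type quantity: $\sum_{i_d}b_{i_d}^2=\sum_{i_d}\big(\sum a_{\bfi}X^1\cdots X^{d-1}\big)^2$ is a nonnegative polynomial of order $2(d-1)$ in the variables $X^1,\ldots,X^{d-1}$, whose $p/2$-th moment I would bound by expanding and applying the induction hypothesis to the resulting decoupled chaoses (after symmetrizing or using a contraction principle to handle the squares). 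The maximum term I would control by a union/chaining bound over $i_d$, again invoking the inductive moment estimates for each fixed $i_d$ together with the comparability of $p$- and $2p$-th moments for log-concave-tail chaoses. The combinatorial bookkeeping is organizing the partitions: each partition ${\cal J}\in P_{d-1}$ arising from the inductive bound, combined with the way the last coordinate splits off (either as its own singleton block $\{d\}$, contributing the $\sqrt p\,\ell_2$ piece, or merged via the $\ell_\infty$ piece into an existing block), must be shown to produce exactly a term of the form $\|(a_{\bfi})\|_{{\cal J}',p}^{\cal N}$ for some ${\cal J}'\in P_d$, with the correct power of $p$.

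The key structural fact that makes the exponential case tractable for all $d$ (unlike the general log-concave case, restricted to $d\le 3$) is that $\hat N_i^j(t)=t$ for $t>1$ is \emph{linear}, so the constraint sets $\{\sum_i \hat N_i^{s_l}(\|\cdot\|_2)\le p\}$ decompose cleanly and the suprema defining $\|(a_{\bfi})\|_{{\cal J},p}^{\cal N}$ factorize across blocks in a scale-invariant way; concretely, the $\ell_1$-type constraint coming from the exponential tail interacts simply with the $\ell_2$-norms inside each block. I would make repeated use of the normalization \eqref{norm} and of the elementary inequality that for an exponential variable the constraint $\sum_i \hat N_i(x_i)\le p$ is equivalent, up to universal constants, to $\|x\|_2\le\sqrt p$ together with $\|x\|_1\le p$, which is what lets the $\sqrt p\,\ell_2+p\,\ell_\infty$ conditional bound be re-expressed in terms of the norms $\|(a_{\bfi})\|_{{\cal J},p}^{\cal N}$.

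I expect the main obstacle to be the $\ell_2$ term $\big\|(\sum_{i_d}b_{i_d}^2)^{1/2}\big\|_p$: squaring turns an order-$(d-1)$ decoupled chaos into a positive order-$2(d-1)$ object with repeated indices and diagonal terms, so one cannot directly feed it into the induction hypothesis. Taming this requires a decoupling/contraction step to linearize the square (e.g.\ comparing $\sum_{i_d}b_{i_d}^2$ with $(\sum_{i_d}b_{i_d}\varepsilon_{i_d})^2$ for independent signs, or introducing an auxiliary independent copy and a Gaussian/Bernoulli contraction) and then a careful verification that the partition norms of the resulting higher-order chaos collapse back onto the norms indexed by $P_d$ with the intended powers of $p$. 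Getting the powers of $p$ to match exactly across this reduction, rather than merely up to a $\log$ or a wrong exponent, is the delicate point, and is precisely where the linearity of the exponential $\hat N$ is essential.
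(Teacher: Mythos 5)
Your overall frame (induction on $d$ with decoupled conditioning, plus the observation that the exponential constraint set is equivalent to $\sqrt{p}B_2^n + pB_1^n$) is sound, and your treatment of the $\ell_\infty$/maximum term via union bounds would work (the paper does the analogous bookkeeping with a sum-to-max lemma, Lemma \ref{sumy_na_maxima}). But you condition in the opposite direction from the paper, and this is where the proposal breaks down. Fixing $X^1,\ldots,X^{d-1}$ and applying the Gluskin--Kwapie\'n $d=1$ bound leaves you with $\sqrt{p}\,\|(\sum_{i_d}b_{i_d}^2)^{1/2}\|_p$, i.e.\ the $p$-th moment of $\sup_{\|x\|_2\le 1}\sum_{i_d}x_{i_d}b_{i_d}$, a supremum of order-$(d-1)$ chaoses over an $\ell_2$-ball. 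The scalar induction hypothesis says nothing about such vector-valued quantities, and the repairs you suggest fail concretely. Comparing with Rademacher signs gives, by Jensen (since $\E_\varepsilon(\sum_{i_d}\varepsilon_{i_d}b_{i_d})^2=\sum_{i_d}b_{i_d}^2$ and $p\ge 2$), the chain $\|(\sum_{i_d}b_{i_d}^2)^{1/2}\|_p\le\|\sum_{i_d}\varepsilon_{i_d}b_{i_d}\|_p\le\|\sum_{i_d}\xi_{i_d}b_{i_d}\|_p$ (insert $\E|\xi_{i_d}|=1$ and apply Jensen again), so this term is dominated only by the full order-$d$ chaos itself, and the prefactor $\sqrt{p}$ makes the resulting estimate circular --- it cannot be absorbed into the left-hand side. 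Decoupling the square with an independent copy instead produces a chaos of order $2(d-1)\ge d$, which lies outside the inductive scheme. So the ``main obstacle'' you flag is not delicate bookkeeping of powers of $p$ but a genuinely missing idea.

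The paper conditions the other way: fixing $X^d$, it applies the order-$(d-1)$ induction hypothesis to the chaos with random coefficient array $(\sum_{i_d}a_{\bfi}X^d_{i_d})$, after first rewriting all the norms $\|(a_{\bfi})\|^{\cal N}_{{\cal J},p}$ in the exponential case as weighted sums of quantities $p^{\#I^c+\#{\cal J}/2}\max_{\ii_{I^c}}\|(a_{\bfi})_{\ii_I}\|_{\cal J}$ involving injective tensor norms. The entire difficulty then concentrates in bounding the expected norms $\E\|(\sum_{i_d}a_{\bfi}X^d_{i_d})_{\ii_I}\|_{\cal J}$ (Lemma \ref{main_lemma_exp}), and the mechanism used there --- representing the symmetric exponential as comparable to a product $g_{i_d}\tilde{g}_{i_d}$ of independent Gaussians (Lemma \ref{comparison_lemma}), applying Lata{\l}a's Gaussian chaos moment bounds conditionally (Lemma \ref{Gaussian_averages_1}), then linearizing a square via a Lipschitz contraction and Slepian's lemma, and closing a self-referential inequality for $\E\|(a_{\bfi}g_{i_d})\|_{\cal K}$ by an absorption argument with small $\varepsilon$ --- is entirely absent from your sketch. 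Without an analogue of this step, or some other device for suprema of chaoses over $\ell_2$-balls, your induction cannot close.
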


\section{Proof of Theorem \ref{lower}}

We will proceed by induction with respect to $d$. The case $d=1$ was proved in \cite{GK}. Let us therefore assume the theorem for all positive integers smaller than $d>1$.

Note that since we allow the constants to depend on $d$, it is enough to show that the left-hand side of (\ref{lowermom})
is minorized by each of the summands on the right-hand side.

For any ${\cal J} = \{I_1,\ldots,I_k\} \in P_d$, with $k\ge 2$, the induction assumption applied conditionally on $(X^j)_{j\in I_1}$
gives

\begin{align}\label{lower_est_induction}
&\Big\|\sum_{\bfi}a_{\bfi}X_{i_1}^{1}\cdots X_{i_d}^{d}\Big\|_p \ge \frac{1}{L_{d-\#I_1}}\Big(\E_{I_1}\Big(\Big\|\Big(\sum_{\bfi_{I_1}} a_{\bfi}\prod_{r\in I_1}X_{i_r}^r\Big)_{\bfi_{I_1^c}}\Big\|_{{\cal J}\setminus I_1,p}^{{\cal N'}}\Big)^p\Big)^{1/p},
\end{align}
where ${\cal N}' = (N_i^j)_{1\le i \le n, j\in I_1^c}$.

Let us fix arbitrary $s_1 \in I_1,\ldots, s_k \in I_k$.
We have
\begin{align*}
&\E_{I_1}\Big(\Big\|\Big(\sum_{\bfi_{I_1}} a_{\bfi}\prod_{r\in I_1}X_{i_r}^r\Big)_{\bfi_{I_1^c}}\Big\|_{{\cal J}\setminus I_1,p}^{{\cal N'}}\Big)^p\\
&= \E_{I_1}\Big(\sup\Big\{\Big|\sum_{\bfi_{I_1^c}}\Big(\sum_{\bfi_{I_1}} a_{\bfi}\prod_{r\in I_1}X_{i_r}^r\Big)\prod_{l=2}^k x_{\bfi_{I_l}}^l\Big|\colon \\
&\phantom{aaaaaaaaaaaaaaaaaaaaa}
 \sum_{i_{s_l}}\hat{N}_{i_{s_l}}^l(\|(x_{\bfi_{I_l}}^l)_{\bfi_{I_l\backslash\{s_l\}}}\|_2)\le p, 2\le l\le k\Big\}\Big)^p\\
& \ge \sup\{\E_{I_1}|\sum_{\bfi_{I_1^c}}(\sum_{\bfi_{I_1}} a_{\bfi}\prod_{r\in I_1}X_{i_r}^r)\prod_{l=2}^k x_{\bfi_{I_l}}^l|^p
 \colon\\
 &\phantom{aaaaaaaaaaaaaaaaaaaaa} \sum_{i_{s_l}}\hat{N}_{i_{s_l}}^l(\|(x_{\bfi_{I_l}}^l)_{\bfi_{I_l\backslash\{s_l\}}}\|_2)\le p, 2\le l\le k\}\\
& \ge  \frac{1}{L_{\#I_1}^p}\Big(\sup\{|\sum_{\bfi} a_{\bfi}\prod_{l=1}^k x_{\bfi_{I_l}}^l|
 \colon \sum_{i_{s_l}}\hat{N}_{i_{s_l}}^l(\|(x_{\bfi_{I_l}}^l)_{\bfi_{I_l\backslash\{s_l\}}}\|_2)\le p, 1\le l\le k\}\Big)^p,
\end{align*}
where the last inequality follows from another application of the induction assumption, this time to a chaos of order $\#I_1$.
Since the indices $s_1,\ldots,s_d$ run over sets of cardinality not exceeding $d$, the above estimate together with (\ref{lower_est_induction}) imply that
\begin{displaymath}
\Big\|\sum_{\bfi}a_{\bfi}X_{i_1}^{1}\cdots X_{i_d}^{d}\Big\|_p \ge \frac{1}{L_d}\|a_\bfi\|_{\mathcal{J},p}^{\cal N}.
\end{displaymath}

The case $k = 1$ requires a different approach. Again it is enough to show that for each $l \in \{1,\ldots,d\}$,
\begin{align*}
\Big\|\sum_{\bfi}a_{\bfi}X_{i_1}^{1}\cdots X_{i_d}^{d}\Big\|_p
\ge \sup\Big\{\sum_{\bfi} a_{\bfi} x_{\bfi}
\colon  \sum_{i_{l}}\hat{N}_{i_{l}}^{l}\Big(\Big(\sum_{\bfi_{\{l\}^c}} x_\bfi^2\Big)^{1/2}\Big)\leq p\Big\}.
\end{align*}
Consider any $x_\bfi$ such that $\sum_{i_l}\hat{N}_{i_l}^l((\sum_{\bfi_{\{l\}^c}}x_\bfi^2)^{1/2})\le p$. By the symmetry of $X_i^l$ we have
\begin{align*}
\Big\|\sum_{\bfi}a_{\bfi}X_{i_1}^{1}\cdots X_{i_d}^{d}\Big\|_p^p & = \E_l \E_{\{l\}^c}\Big|\sum_{i_l} X_{i_l}^l \Big|\sum_{\bfi_{\{l\}^c}} a_{\bfi}\prod_{k\neq l} X_{i_k}^k\Big|\Big|^p\\
&\ge \E_l \Big|\sum_{i_l} X_{i_l}^l \E_{\{l\}^c}\Big|\sum_{\bfi_{\{l\}^c}} a_{\bfi}\prod_{k\neq l} X_{i_k}^k\Big|\Big|^p\\
&\ge \frac{1}{L_d^p}\E_l \Big|\sum_{i_l} X_{i_l}^l \Big(\sum_{\bfi_{\{l\}^c}} a_\bfi^2\Big)^{1/2}\Big|^p\\
&\ge \frac{1}{L_d^p}\Big|\sup\Big\{\sum_{i_l} \Big(\sum_{\bfi_{\{l\}^c}} a_\bfi^2\Big)^{1/2}\alpha_{i_l}\colon \sum_{i_l}\hat{N}_{i_l}^l(\alpha_{i_l})\le p\Big\}\Big|^p\\
&\ge \frac{1}{L_d^p}\Big|\sum_{i_l} \Big(\sum_{\bfi_{\{l\}^c}} a_\bfi^2\Big)^{1/2}\Big(\sum_{\bfi_{\{l\}^c}} x_\bfi^2\Big)^{1/2}\Big|^p\\
&\ge \frac{1}{L_d^p}\Big|\sum_\bfi a_\bfi x_\bfi\Big|^p,
\end{align*}
where the first inequality follows from Jensen's inequality, the second one from hypercontractivity of chaoses generated by log-concave random variables combined with the contraction principle and the third one from the induction assumption.

\section{Preliminary facts \label{Preliminary_facts}}
In this section we present the basic notation and tools to be used in the proof of our main results.
\subsection{Some additional notation}
\begin{enumerate}
\item By $\gamma_{n,t}$ we will
denote the distribution of $tG_{n}$, where
$G_{n}=(g_1,\ldots,g_n)$ is the standard Gaussian vector in $\R^n$.

\item By $\nu_{n,t}$ we will
denote the distribution of $t{\cal E}_{n}$, where
${\cal E}_{n}=(\xi_1,\ldots,\xi_n)$ is a random vector in $\er^n$ with independent coordinates
distributed according to the symmetric exponential distribution with parameter 1. Thus
$\nu_{n,t}$ has the density
\[
d\nu_{n,t}(x)=(2t)^{-n}\exp\Big(-\frac{1}{t}\sum_{i=1}^n|x_i|\Big)dx.
\]
We also put ${\cal E}_{n}^{i}=(\xi_1^{i},\ldots,\xi_n^{i})$ for i.i.d.\ copies
of ${\cal E}_{n}$.

Let us note that $\E \xi_i^2 = 2$.

\item For any norm $\alpha$ on $\R^{n_1\cdots n_d} = \R^{n_1}\otimes \cdots\otimes  R^{n_d}$ (which we will identify with the space of $d$-indexed matrices), let $\rho_\alpha$ be the distance on $\R^{n_1}\times \cdots \times \R^{n_d}$, defined by
\begin{displaymath}
\rho_\alpha(\mathbf{x},\mathbf{y}) = \alpha(x_1\otimes\cdots\otimes x_d-y_1\otimes\cdots\otimes y_d),
\end{displaymath}
where $\mathbf{x} = (x_1,\ldots,x_d), \mathbf{y}= (y_1,\ldots,y_d)$.
\medskip

For $\mathbf{x} \in \R^{n_1+\cdots +n_d}$ and $r \ge 0$ let $B_\alpha(\mathbf{x},r)$ be the closed ball in the metric $\rho_\alpha$ with center $\mathbf{x}$ and radius $r$.
\item Now, for $T \subset \R^{n_1}\times\cdots\times \R^{n_d}$, $t > 0$, define
\begin{displaymath}
W_d^T(\alpha,t) = \sum_{k=1}^d t^k\sum_{I \subset \{1,\ldots,d\}, \#I = k}W_I^T(\alpha),
\end{displaymath}
where for $I \subset \{1,\ldots,d\}$,
\begin{displaymath}
W_I^T(\alpha) = \sup_{\mathbf{x}\in T}\Ex\alpha\Big(\Big(\prod_{k\notin I} x_{i_k}^k\prod_{k\in I}g_{i_k}^{k}\Big)_{i_1,\ldots,i_d}\Big).
\end{displaymath}

\item Similarly, for $t>0$, $T\subset \er^{n_{1}}\times\ldots\times\er^{n_{d}}$ we put
\[
  V_{d}^{T}(\alpha,t):=\sum_{k=1}^{d}t^{k}
  \sum_{I\subset\{1,\ldots,d\}\colon \# I=k}V_{I}^{T}(\alpha),
\]
where
\[
  V_{I}^{T}(\alpha):=\sup_{\bfx\in T}\Ex
  \alpha\Big(\Big(\prod_{k\notin I}x_{i_{k}}^{k}\prod_{k\in I}
  \xi_{i_{k}}^{k}\Big)_{i_{1},\ldots,i_{d}}\Big).
\]
\item For $s,t > 0$, $T \subset \R^{n_1}\times\ldots\times \R^{n_d}$, we define
\begin{displaymath}
U_{d}^T(\alpha,s, t) := \sum_{k=1}^d \sum_{I,J\subset \{1,\ldots,d\},\atop\#(I\cup J) = k, I\cap J = \emptyset} s^{\#I}t^{\#J} U_{I,J}^T(\alpha)
\end{displaymath}
where
\begin{displaymath}
U_{I,J}^T(\alpha) := \sup_{\bfx \in T}\E \alpha\Big(\Big(\prod_{k\notin (I\cup J)}x_{i_k}^k\prod_{k\in I} g_{i_k}^{k}\prod_{k\in J}\xi_{i_k}^{k}\Big)_{i_1,\ldots,i_d}\Big).
\end{displaymath}
\end{enumerate}

\paragraph{Remark} Let us notice that $U_{\emptyset,I}^T(\alpha) = V_I^T(\alpha)$, whereas $U_{I,\emptyset}^T(\alpha) = W_I^T(\alpha)$.

The quantity $W_I^T$ was defined in \cite{L2}, where it played an important role in the analysis of moments of Gaussian chaoses. The quantities $V_I^T$ and $U_{I,J}^T$ will play an analogous role for chaoses generated by general random variables with logarithmically concave tails (as will become clear in the next section, they will allow us to bound the covering numbers for more general sets than those which were important in the Gaussian case).

\subsection{Entropy estimates \label{Entropy_subsection}}
In this section we present some general entropy estimates which will be crucial for bounding suprema of stochastic processes in the proof of Theorem \ref{upper}.

The first lemma we will need is a reformulation of Lemma 1 in \cite{L2}. The original statement from \cite{L2} is slightly weaker however the proof given therein justifies the version presented below.

\begin{lem}\label{estmeasgauss}

For any norms $\alpha_{1},\alpha_{2}$ on $\er^{n}$, $y\in B_2^{n}$
and $t>0$,
%%\begin{align*}
\[  \gamma_{n,t}\Big(x\colon \alpha_{i}(x-y)\leq 4t\Ex\alpha_{i}(G_{n}), \; i= 1,2\Big)
  \geq \frac{1}{2}e^{-1/(2t^2)}.
\]
%%\end{align*}
\end{lem}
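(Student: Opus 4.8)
## Proof Proposal

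The plan is to reduce the two-norm simultaneous estimate to a concentration-of-Gaussian-measure argument combined with a union bound. The key observation is that for the standard Gaussian vector $G_n$ and any norm $\alpha$, the random variable $\alpha(G_n)$ concentrates sharply around its mean $\Ex\alpha(G_n)$, and moreover the shift by a point $y\in B_2^n$ costs us only a controlled Gaussian density factor. First I would reduce to the case $y = 0$ by a change of variables, writing the measure of the shifted set $\{x\colon \alpha_i(x-y)\le 4t\Ex\alpha_i(G_n),\ i=1,2\}$ as an integral of the Gaussian density over the centered set $\{z\colon \alpha_i(z)\le 4t\Ex\alpha_i(G_n),\ i=1,2\}$, where $z = x - y$. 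Since the density of $\gamma_{n,t}$ at $z+y$ relative to its value at $z$ involves the factor $\exp\big(-\tfrac{1}{t^2}\langle z,y\rangle - \tfrac{1}{2t^2}|y|_2^2\big)$, and $|y|_2 \le 1$ by hypothesis, the $|y|_2^2$ term produces exactly the $e^{-1/(2t^2)}$ loss appearing in the statement.

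The main step is to show that the centered set carries Gaussian measure at least $\tfrac12$ after the symmetrization needed to kill the linear term $\langle z,y\rangle$. Here I would exploit the symmetry of $\gamma_{n,t}$: pairing $z$ with $-z$, the centered set $\{z\colon \alpha_i(z)\le 4t\Ex\alpha_i(G_n)\}$ is symmetric, so averaging the two density ratios $e^{-\langle z,y\rangle/t^2}$ and $e^{+\langle z,y\rangle/t^2}$ and using $\tfrac12(e^{u}+e^{-u})\ge 1$ lets me bound the shifted measure below by $e^{-1/(2t^2)}$ times the centered measure. Thus it remains to prove
\[
\gamma_{n,t}\Big(z\colon \alpha_i(z)\le 4t\Ex\alpha_i(G_n),\ i=1,2\Big)\ge \frac12.
\]
For this I would use Markov's inequality on each norm separately: since $\gamma_{n,t}$ is the law of $tG_n$, we have $\Ex\alpha_i(tG_n) = t\,\Ex\alpha_i(G_n)$, so by Markov the event $\alpha_i(tG_n) > 4t\Ex\alpha_i(G_n)$ has probability at most $\tfrac14$. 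A union bound over $i=1,2$ then leaves probability at least $1 - \tfrac14 - \tfrac14 = \tfrac12$ for the intersection.

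The hard part will be making the interplay between the shift and the symmetrization fully rigorous, in particular verifying that the density-ratio inequality survives integration against the centered symmetric set; this requires that one restricts attention to the symmetric set and applies Jensen's (or the convexity of $u\mapsto \cosh u$) correctly under the conditional measure, rather than pointwise. The factor $4$ in the threshold is precisely what gives room for the two-norm union bound to beat $\tfrac12$, so no constant can be improved without sacrificing the simultaneous control of both $\alpha_1$ and $\alpha_2$. Everything else is a routine Gaussian computation, and since the lemma is stated as a reformulation of Lemma~1 in \cite{L2}, I would cross-check that the constants $4$ and $\tfrac12$ match the margins available in that argument.
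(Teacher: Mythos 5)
Your proof is correct and is essentially the argument behind this lemma (the paper cites Lemma~1 of \cite{L2}, whose proof is exactly this: Markov's inequality with threshold $4t\Ex\alpha_i(G_n)$ plus a union bound gives $\gamma_{n,t}(K)\geq 1/2$ for the centered symmetric set $K$, and the shift by $y\in B_2^n$ is absorbed via the density factorization $e^{-|z+y|^2/(2t^2)}=e^{-|y|^2/(2t^2)}e^{-\langle z,y\rangle/t^2}e^{-|z|^2/(2t^2)}$ together with the symmetrization $\frac{1}{2}(e^{u}+e^{-u})\geq 1$ over $z\mapsto -z$); it also mirrors the paper's proof of Lemma~\ref{estmeasexp}, where the triangle inequality in the exponent replaces the symmetrization. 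The only remark is that your final worry is unfounded: no conditional Jensen argument is needed, since the pointwise bound $\cosh u\geq 1$ on the symmetric set already closes the argument.
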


\begin{lem}
\label{estmeasexp}
For any norms $\alpha_{1},\alpha_{2}$ on $\er^{n}$, $y\in  aB_{1}^{n}$
and $t>0$,
%%\begin{align*}
\[  \nu_{n,t}\Big(x\colon \alpha_{i}(x-y)\leq 4t\Ex\alpha_{i}(\calE_{n}), \; i= 1,2\Big)
  \geq \frac{1}{2}e^{-a/t}.
\]
%%\end{align*}
\end{lem}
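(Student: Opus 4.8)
The plan is to prove Lemma \ref{estmeasexp} by reducing the two-sided constraint on two norms to a product of simple one-dimensional (or low-dimensional) estimates, exploiting the explicit product structure of the symmetric exponential measure $\nu_{n,t}$. The key observation is that $\nu_{n,t}$ factorizes over coordinates, and each coordinate has a density proportional to $e^{-|x_i|/t}$, so translations by a vector $y\in aB_1^n$ cost at most a factor $e^{-a/t}$ in density. This is the exponential analogue of the Gaussian shift in Lemma \ref{estmeasgauss}, where $y\in B_2^n$ produced the factor $e^{-1/(2t^2)}$; here the relevant geometry is the $\ell_1$-ball rather than the $\ell_2$-ball, matching the fact that $\exp(-\sum_i|x_i|/t)$ is constant on $\ell_1$-spheres up to the linear cost.

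First I would reduce to the centered case $y=0$ by a change of variables. Writing the event as $E_y=\{x\colon \alpha_i(x-y)\le 4t\,\Ex\alpha_i(\calE_n),\ i=1,2\}$, I substitute $x=u+y$ and compare the density $d\nu_{n,t}(u+y)$ with $d\nu_{n,t}(u)$. Since
\[
\sum_i|u_i+y_i|\ge \sum_i|u_i|-\sum_i|y_i|\ge \sum_i|u_i|-a
\]
for $y\in aB_1^n$, the densities satisfy $d\nu_{n,t}(u+y)\ge e^{-a/t}\,d\nu_{n,t}(u)$ pointwise. Hence
\[
\nu_{n,t}(E_y)\ge e^{-a/t}\,\nu_{n,t}(E_0),
\]
where $E_0=\{u\colon \alpha_i(u)\le 4t\,\Ex\alpha_i(\calE_n),\ i=1,2\}$. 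It then remains to show $\nu_{n,t}(E_0)\ge 1/2$.

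To bound $\nu_{n,t}(E_0)$ from below I would use a simple first-moment/Markov argument together with a union bound over the two norms. Under $\nu_{n,t}$ the vector $x$ has the law of $t\calE_n$, so $\Ex\alpha_i(x)=t\,\Ex\alpha_i(\calE_n)$. By Markov's inequality,
\[
\nu_{n,t}\big(\alpha_i(x)>4t\,\Ex\alpha_i(\calE_n)\big)=\Pr\big(\alpha_i(t\calE_n)>4\,\Ex\alpha_i(t\calE_n)\big)\le \tfrac14,
\]
and summing the two complementary probabilities for $i=1,2$ gives $\nu_{n,t}(E_0^c)\le \tfrac12$, i.e.\ $\nu_{n,t}(E_0)\ge \tfrac12$. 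Combining with the shift estimate yields $\nu_{n,t}(E_y)\ge \tfrac12 e^{-a/t}$, which is exactly the claim.

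I expect the main subtlety to lie not in the Markov step but in confirming that the shift comparison is tight in the exponent, i.e.\ that the bound $\sum_i|u_i+y_i|\ge\sum_i|u_i|-a$ is the right quantity and that no extra factor is lost; this is where the choice of the $\ell_1$-ball $aB_1^n$ (rather than, say, an $\ell_2$-ball) is forced by the product-exponential structure, in contrast to the Gaussian case. The factor $4$ in the radius is exactly what makes the Markov constant $\tfrac14$ work after the union bound, so I would keep track of it carefully; any smaller multiple would require a sharper concentration input than plain Markov.
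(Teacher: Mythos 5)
Your proof is correct and follows essentially the same route as the paper's: a Markov/union-bound estimate showing that the centered set $K=\{u\colon \alpha_i(u)\le 4t\,\Ex\alpha_i(\calE_n),\ i=1,2\}$ has $\nu_{n,t}$-measure at least $1/2$, combined with the observation that translating by $y\in aB_1^n$ costs at most a factor $e^{-a/t}$ in the exponential density. One small correction: the density comparison $d\nu_{n,t}(u+y)\ge e^{-a/t}\,d\nu_{n,t}(u)$ requires the ordinary triangle inequality $\sum_i|u_i+y_i|\le \sum_i|u_i|+a$, whereas you invoked the reverse bound $\sum_i|u_i+y_i|\ge \sum_i|u_i|-a$, which only yields the useless upper estimate $d\nu_{n,t}(u+y)\le e^{a/t}\,d\nu_{n,t}(u)$; the fix is immediate and the rest of the argument stands as written.
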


\begin{proof} Let
\[
  K:=\{x\in\er^{n}\colon \alpha_{1}(x)\leq 4t\Ex\alpha_{1}(\calE_{n}),
  \alpha_{2}(x)\leq 4t\Ex\alpha_{2}(\calE_{n})\}.
\]
By Chebyshev's inequality,
\[
  1-\nu_{n,t}(K)\leq
  \Pr(\alpha_{1}(t\calE_{n})> 4\Ex\alpha_{1}(t\calE_{n}))+
  \Pr(\alpha_{2}(t\calE_{n})> 4\Ex\alpha_{2}(t\calE_{n}))< 1/2.
\]
We get for any $y\in B_{1}^{n}$,
\begin{align*}
  \nu_{n,t}(y+K)&=
  (2t)^{-n}\int_{K}\exp\Big(-\frac{1}{t}\sum_{i=1}^n|x_i+y_i|\Big)dx
\\
  &\geq \exp\Big(-\frac{1}{t}\sum_{i=1}^n|y_i|\Big)\int_{K}d\nu_{n,t}(x)
\\
  &\geq
  \exp(-a/t)\nu_{n,t}(K)\geq \frac{1}{2}\exp(-a/t).
\end{align*}
Finally, notice that if $x\in y+K$, then
$\alpha_{i}(x-y)\leq 4t\Ex\alpha_{i}(\calE_{n})$, $i = 1,2$.
\end{proof}

Before we formulate the next lemma, let us define $\mu_{n,s,t}$ (where $s,t > 0$) as the convolution of $\gamma_{n,s}$ and $\nu_{n,t}$.

\begin{lem}\label{estmeas}
For any norms $\alpha_{1},\alpha_{2}$ on $\er^{n}$, any $a > 0$, $y\in B_2^n + aB_{1}^{n}$
and $s,t>0$, let
\begin{align*}
K = \{x\colon &\alpha_{1}(x-y)\leq 4s\Ex\alpha_{1}(G_{n})+4t\Ex\alpha_1(\calE_n),\\
  &\alpha_{2}(x)\leq 4s\Ex\alpha_{2}(G_{n})+4t\Ex\alpha_2(\calE_n)+
  \alpha_{2}(y)\}.
\end{align*}
Then
\begin{align*}
 \mu_{n,s,t}(K) \ge \frac{1}{4}e^{-1/(2s^2) - a/t}.
\end{align*}
\end{lem}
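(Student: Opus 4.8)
The plan is to combine Lemmas \ref{estmeasgauss} and \ref{estmeasexp} by exploiting the product structure of the convolution $\mu_{n,s,t}$. First I would realize $\mu_{n,s,t}$ as the law of $G+E$, where $G\sim\gamma_{n,s}$ and $E\sim\nu_{n,t}$ are independent (equivalently, $G=sG_n$ and $E=t\calE_n$). Since $y\in B_2^n+aB_1^n$, I can fix a decomposition $y=y_1+y_2$ with $y_1\in B_2^n$ and $y_2\in aB_1^n$; the idea is to let the Gaussian part carry the $B_2^n$-component $y_1$ and the exponential part carry the $B_1^n$-component $y_2$.

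Next I would apply Lemma \ref{estmeasgauss} (with scale $s$ in place of $t$ and center $y_1\in B_2^n$) to the event
\[
A=\{g\colon \alpha_i(g-y_1)\leq 4s\,\Ex\alpha_i(G_n),\ i=1,2\},
\]
which satisfies $\gamma_{n,s}(A)\geq\tfrac12 e^{-1/(2s^2)}$, and Lemma \ref{estmeasexp} (with scale $t$ and center $y_2\in aB_1^n$) to the event
\[
B=\{e\colon \alpha_i(e-y_2)\leq 4t\,\Ex\alpha_i(\calE_n),\ i=1,2\},
\]
which satisfies $\nu_{n,t}(B)\geq\tfrac12 e^{-a/t}$.

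The core verification is that the simultaneous occurrence of $A$ and $B$ forces $g+e\in K$. For the first defining inequality of $K$, I would write $g+e-y=(g-y_1)+(e-y_2)$ and use the triangle inequality for $\alpha_1$ to bound this by $4s\,\Ex\alpha_1(G_n)+4t\,\Ex\alpha_1(\calE_n)$. For the second, I would instead write $g+e=(g-y_1)+(e-y_2)+y$ and use the triangle inequality for $\alpha_2$, obtaining $\alpha_2(g+e)\leq 4s\,\Ex\alpha_2(G_n)+4t\,\Ex\alpha_2(\calE_n)+\alpha_2(y)$. The asymmetry between the two conditions of $K$ is exactly what makes this second step absorb the center $y$ rather than cancel it. Finally, by independence of $G$ and $E$,
\[
\mu_{n,s,t}(K)=\Pr(G+E\in K)\geq \gamma_{n,s}(A)\,\nu_{n,t}(B)\geq \tfrac14 e^{-1/(2s^2)-a/t}.
\]

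I do not anticipate a genuine obstacle: the argument factorizes cleanly through the two earlier lemmas, and the only points requiring care are matching the centered condition (distance to $y$) with the Gaussian/exponential split $y=y_1+y_2$, and choosing the correct grouping in each triangle inequality so that the size condition on $\alpha_2$ picks up the extra additive term $\alpha_2(y)$ while the distance condition on $\alpha_1$ does not.
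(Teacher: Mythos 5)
Your proof is correct and is essentially the paper's own argument: the same splitting $y=y_1+y_2$ with $y_1\in B_2^n$, $y_2\in aB_1^n$, the same application of Lemmas \ref{estmeasgauss} and \ref{estmeasexp} to the two pieces, and the same triangle-inequality verification (the paper phrases it as the Minkowski-sum inclusion $K_1+K_2\subset K$ together with $\mu_{n,s,t}(K_1+K_2)\geq \gamma_{n,s}(K_1)\nu_{n,t}(K_2)$, which is exactly your independence step for $G+E$).
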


\begin{proof}
We have $y = y_1 + y_2$ for some $y_1 \in B_2^n$, $y_2 \in aB_1^n$. Define
\begin{align*}
K_1 &= \{x\in \R^n\colon \alpha_i(x-y_1) \le 4s\Ex\alpha_i(G_n), i = 1,2\},\\
K_2 & = \{x\in \R^n\colon \alpha_i(x-y_2) \le 4t\Ex\alpha_i(\calE_n), i =1,2\}.
\end{align*}
For $x = x_1 + x_2$, where $x_j \in K_j$, $j=1,2$,
\begin{displaymath}
\alpha_1(x - y) \le \alpha_1(x_1 - y_1) + \alpha_1(x_2-y_2) \le 4s\Ex\alpha_1(G_n) + 4t\Ex\alpha_1(\calE_n)
\end{displaymath}
and similarly
\begin{displaymath}
\alpha_2(x) \le \alpha_2(x - y) + \alpha_2(y) \le 4s\Ex\alpha_2(G_n) + 4t\Ex\alpha_2(\calE_n) + \alpha_2(y),
\end{displaymath}
therefore $K_1 + K_2 \subset K$. We thus have
\begin{displaymath}
\mu_{n,s,t}(K) \ge \mu_{n,s,t}(K_1 + K_2) \ge \gamma_{n,s}(K_1)\nu_{n,t}(K_2) \ge \frac{1}{4}e^{-1/(2s^2) - a/t},
\end{displaymath}
where in the last inequality we used Lemmas \ref{estmeasgauss} and \ref{estmeasexp}.
\end{proof}

\begin{lem}
\label{estmeas2+d}
For any $s,t>0$, $a = (a_1,\ldots,a_d) \in (0,\infty)^d$ and $\bfx\in (B_2^{n_1} + a_1B_1^{n_1})\times \ldots\times (B_2^{n_d} + a_dB_1^{n_d})$
we have

\begin{equation}
\label{estmeasin}
  \mu_{n_{1}+\ldots+n_{d},s,t}\big(B_{\alpha}\big(\bfx,
  U_{d}^{\{\bfx\}}(\alpha,4s,4t)\big)\big)
  \geq 4^{-d}\exp\Big(-\frac{1}{2}ds^{-2} - \|a\|_1t^{-1}\Big).
\end{equation}
\end{lem}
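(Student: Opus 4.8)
The natural approach is to reduce Lemma~\ref{estmeas2+d} to the single-block case treated in Lemma~\ref{estmeas} by exploiting the product structure of both the measure $\mu_{n_1+\cdots+n_d,s,t}$ and the distance $\rho_\alpha$. First I would write $\bfx = (x_1,\ldots,x_d)$ and decompose each $x_l = y_l + z_l$ with $y_l \in B_2^{n_l}$ and $z_l \in a_l B_1^{n_l}$, which is possible by the hypothesis on $\bfx$. Since $\mu_{n_1+\cdots+n_d,s,t}$ is the law of an independent vector $(\bfy^1,\ldots,\bfy^d)$ whose $l$-th block $\bfy^l$ has law $\mu_{n_l,s,t}$ (Gaussian scaled by $s$ plus symmetric exponential scaled by $t$, independent across blocks), the target probability factors over the blocks once we find a product set of good configurations sitting inside $B_\alpha(\bfx, U_d^{\{\bfx\}}(\alpha,4s,4t))$.

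The key step is a telescoping estimate for the tensor difference. For candidate points $u_l$ in the $l$-th block I would bound
\begin{align*}
\alpha(u_1\otimes\cdots\otimes u_d - x_1\otimes\cdots\otimes x_d)
&\le \sum_{l=1}^d \alpha\big(x_1\otimes\cdots\otimes x_{l-1}\otimes(u_l-x_l)\otimes u_{l+1}\otimes\cdots\otimes u_d\big),
\end{align*}
a standard hybrid/telescoping decomposition. One then controls each displacement $u_l - x_l$ using Lemma~\ref{estmeas} applied in the $l$-th coordinate: taking the two norms there to be $\alpha_1(v)=\alpha(\text{tensor with }v\text{ in slot }l)$ and an analogous functional, the event $\{\alpha_i(u_l-x_l)\le 4s\Ex\alpha_i(G_{n_l})+4t\Ex\alpha_i(\calE_{n_l})+\cdots\}$ has $\mu_{n_l,s,t}$-probability at least $\frac14 e^{-1/(2s^2)-a_l/t}$. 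Expanding the product $\prod_l (y_l + g_l\text{-part} + \xi_l\text{-part})$ after substituting the bounds on the displacements produces exactly the mixed sum defining $U_d^{\{\bfx\}}(\alpha,4s,4t)$: each term selects, for a subset $I$ of coordinates a Gaussian factor contributing $4s$, for a disjoint subset $J$ an exponential factor contributing $4t$, and deterministic $x$-factors elsewhere, matching $U_{I,J}^{\{\bfx\}}(\alpha)$ after taking expectations.

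The main obstacle is precisely this bookkeeping: one must verify that when the per-coordinate bounds from Lemma~\ref{estmeas} are expanded through the multilinear tensor, the resulting sum of expectations collapses to $U_d^{\{\bfx\}}(\alpha,4s,4t)$ with the correct powers $s^{\#I}t^{\#J}$ and no stray cross terms, and that the functionals $\Ex\alpha_i(G_{n_l})$, $\Ex\alpha_i(\calE_{n_l})$ are exactly the single-block pieces of $U_{I,J}^{\{\bfx\}}$. Because the blocks are independent and the good events are defined blockwise, the probability lower bound multiplies to
\[
\prod_{l=1}^d \frac14 e^{-1/(2s^2) - a_l/t} = 4^{-d}\exp\Big(-\tfrac12 d s^{-2} - \|a\|_1 t^{-1}\Big),
\]
which is exactly \eqref{estmeasin}. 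I expect the telescoping-to-$U_d$ identification to be the only genuinely delicate point; the probability computation and the reduction to Lemma~\ref{estmeas} should then be routine.
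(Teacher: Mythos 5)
Your skeleton matches the paper's: the hybrid/telescoping decomposition you write down is exactly what the paper's inequality (\ref{trian}) yields when iterated (the paper packages it as an induction on $d$, peeling off the last coordinate), Lemma \ref{estmeas} is indeed the per-coordinate engine, and the probability $4^{-d}\exp(-\frac{1}{2}ds^{-2}-\|a\|_1 t^{-1})$ does emerge one coordinate at a time. But the step you defer as bookkeeping-plus-routine --- that ``the good events are defined blockwise'' so independence multiplies the probabilities, and that the expansion ``collapses'' to $U_d^{\{\bfx\}}(\alpha,4s,4t)$ --- is precisely where the content of the proof lies, and as stated it fails. In your telescoping, the norm controlling the slot-$l$ displacement, $v\mapsto \alpha(x_1\otimes\cdots\otimes x_{l-1}\otimes v\otimes u_{l+1}\otimes\cdots\otimes u_d)$, depends on the \emph{random} later coordinates, so the good events are not blockwise, and a conditional application of Lemma \ref{estmeas} only yields bounds in terms of the realized random quantities (expectations of $\alpha$ with the random $u$'s frozen in the later slots), not the deterministic quantities $U_{I,J}^{\{\bfx\}}(\alpha)$ that define the radius.

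The missing device, which the paper supplies, is a recursively defined second norm: one sets $\alpha^{2}_{s,t}(y):=U_{d-1}^{\{\pi(\bfx)\}}(\alpha_{y},s,t)$ with $\alpha_y(z)=\alpha(z\otimes y)$, and includes in the event for the last coordinate the size-control condition
\begin{equation*}
\alpha^{2}_{4s,4t}(y^{d})\leq 4s\Ex\alpha^{2}_{4s,4t}(G_{n_d})+4t\Ex\alpha^{2}_{4s,4t}(\calE_{n_d})+\alpha^{2}_{4s,4t}(x^{d}).
\end{equation*}
This is exactly why Lemma \ref{estmeas} is stated asymmetrically, with $\alpha_1$ measuring the displacement $x-y$ but $\alpha_2$ measuring the point $x$ itself with the allowance $\alpha_2(y)$: the second condition is not a second displacement bound but a bound on the \emph{random radius} that the induction hypothesis needs for the remaining $d-1$ coordinates. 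With it in place, the identities (\ref{norm_1})--(\ref{norm_2}) carry out your hoped-for bookkeeping (the pieces sum exactly to $U_d^{\{\bfx\}}(\alpha,4s,4t)$ with the correct powers $s^{\#I}t^{\#J}$), and the probability lower bound multiplies not by independence of blockwise events but by Fubini, using that the inductive bound (\ref{indass}) holds uniformly in the conditioning value of $y^{d}$. Without identifying your ``analogous functional'' as this $U$-functional of the tail, the claimed collapse to $U_d^{\{\bfx\}}$ has no concrete referent, so the plan has a genuine gap at its central step.
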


\begin{proof}
We will proceed by induction on $d$. For $d=1$, inequality
(\ref{estmeasin}) follows by Lemma \ref{estmeas}.
Now suppose that (\ref{estmeasin}) holds for $d-1$. We will show that
it is also satisfied for $d$.
Let us first notice that
\begin{equation}
\label{trian}
  \alpha\Big(\lotimes_{i=1}^{d} x^{i}-\lotimes_{i=1}^{d}y^{i}\Big)
  \leq \alpha^{1}(x^{d}-y^{d})+
  \alpha_{y^{d}}
  \Big(\lotimes_{i=1}^{d-1} x^{i}-\lotimes_{i=1}^{d-1}y^{i}\Big),
\end{equation}
where $\alpha^{1}$ and $\alpha_{y}$
are norms on $\er^{n_{d}}$ and $\er^{n_{1}\cdots n_{d-1}}$ respectively,
defined by
\[
  \alpha^{1}(z):=
  \alpha\Big(\lotimes_{i=1}^{d-1}x^{i}\otimes z\Big)\quad
  \mathrm{and}\quad
  \alpha_{y}(z):=\alpha(z\otimes y).
\]
Then
\begin{equation}
\label{norm_1}
  s\Ex\alpha^{1}(G_n) + t\Ex\alpha^1(\calE_n)= sU_{\{d\},\emptyset}^{\{\bfx\}}(\alpha) + tU_{\emptyset, \{d\}}^{\{\bfx\}}(\alpha).
\end{equation}
Moreover if we put $\pi(\bfx)=(x^{1},\ldots,x^{d-1})$ and
define a norm $\alpha_{s,t}^{2}$ on $\er^{n_{d}}$ by
the formula
\[
  \alpha_{s,t}^{2}(y):=U_{d-1}^{\{\pi(\bfx)\}}(\alpha_{y},s,t)
\]
then
\begin{align}
\label{norm_2}
  &s\Ex\alpha_{s,t}^{2}(G_n)+t\Ex\alpha_{s,t}^{2}(\calE_n) + \alpha_{s,t}^{2}(x^{d})\\
  &=
  \sum_{I,J\subset \{1,\ldots,d\}\atop I\cup J\neq \emptyset, I\cap J = \emptyset}
  s^{\#I}t^{\# J}U_{I,J}^{\{\bfx\}}(\alpha,t) - \Big[sU_{\{d\},\emptyset}^{\{\bfx\}}(\alpha) + tU_{\emptyset, \{d\}}^{\{\bfx\}}(\alpha)\Big].\nonumber
\end{align}
Notice also that by the induction assumption we have for any
$z\in \er^{n_{d}}$,
\begin{align}
\label{indass}
  \mu_{n_{1}+\ldots+n_{d-1},s,t}\Big(&\bfy\in \er^{n_{1}+\ldots+n_{d-1}}\colon
  \alpha_{z}
  \Big(\lotimes_{i=1}^{d-1} x^{i}-\lotimes_{i=1}^{d-1}y^{i}\Big)
  \leq \alpha^{2}_{4s,4t}(z)\Big)
\\
\notag
  &\geq 4^{1-d}\exp(-(d-1)s^{-2}/2 - (a_1+\ldots+a_{d-1})t^{-1}).
\end{align}

Finally let
\begin{align*}
  A(\bfx):=\Big\{&\bfy\in \er^{n_{1}+\ldots+n_{d}}\colon
  \alpha^{1}(x^{d}-y^ {d})
  \leq 4s\Ex\alpha^{1}(G_{n_{d}})+4t\Ex\alpha^{1}(\calE_{n_{d}}),
\\
  &\alpha^{2}_{4s,4t}(y^{d})\leq 4s\Ex\alpha^{2}_{4s,4t}(G_{n_{d}})+4t\Ex\alpha^{2}_{4s,4t}(\calE_{n_{d}}) +
  \alpha^{2}_{4s,4t}(x^{d}),
\\
  &\alpha_{y^{d}}
  \Big(\lotimes_{i=1}^{d-1} x^{i}-\lotimes_{i=1}^{d-1}y^{i}\Big)
  \leq \alpha^{2}_{4s,4t}(y^{d})
  \Big\}.
\end{align*}
By (\ref{trian})-(\ref{norm_2}) we get
$A(\bfx)\subset B_{\alpha}(\bfx, U_{d}^{\{\bfx\}}(\alpha,4s,4t))$ and therefore
by (\ref{indass}), Lemma \ref{estmeas} and Fubini's theorem we
get
\begin{align*}
  &\mu_{n_{1}+\ldots+n_{d},s,t}\Big(B_{\alpha}\Big(\bfx,
  U_{d}^{\{\bfx\}}(\alpha, 4s,4t)\Big)\Big) \\
  &\geq \mu_{n_{1}+\ldots+n_{d},s,t}(A(\bfx))\\
  &\geq
  4^{1-d}\exp(-(d-1)s^{-2}/2 - (a_1+\ldots+a_{d-1})t^{-1})\cdot 4^{-1}\exp(- s^{-2}/2 - a_dt^{-1})\\
&= 4^{-d}\exp(-ds^{-2}/2 - \|a\|_1 t^{-1}).
\end{align*}
\end{proof}

\begin{cor}
\label{estentrgen}
For any
$T\subset (B_2^{n_1} + a_1B_1^{n_1})\times \ldots \times(B_2^{n_d}+a_d B_1^{n_d})$ and $s,t\in (0,1]$,
\[
  N\Big(T,\rho_{\alpha},U_{d}^{T}(\alpha,s,t)\Big)\leq
  \exp(Ld s^{-2} + L\|a\|_1 t^{-1}).
\]
\end{cor}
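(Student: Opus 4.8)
The plan is to derive the covering number bound from the ball-measure estimate of Lemma~\ref{estmeas2+d} via the standard duality between packings and coverings. Write $m = n_1 + \ldots + n_d$ and fix the target radius $r := U_d^T(\alpha, s, t)$. First I would select a subset $\{\bfx_1, \ldots, \bfx_M\} \subset T$ that is maximal with respect to the property $\rho_\alpha(\bfx_i, \bfx_j) > r$ for $i \neq j$. Maximality forces every point of $T$ to lie within $\rho_\alpha$-distance $r$ of some $\bfx_i$, so this set is an $r$-net and $N(T, \rho_\alpha, U_d^T(\alpha, s, t)) \le M$. Since $\alpha$ is a norm, $\rho_\alpha$ obeys the triangle inequality, and consequently the balls $B_\alpha(\bfx_i, r/2)$, $1 \le i \le M$, are pairwise disjoint.

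Next comes the bookkeeping relating the various parameters, which rests on two elementary properties of $U_d^T$. Since $U_{I,J}^{\{\bfx\}}(\alpha) \le U_{I,J}^T(\alpha)$ whenever $\bfx \in T$, it follows that $U_d^{\{\bfx\}}(\alpha, \cdot, \cdot) \le U_d^T(\alpha, \cdot, \cdot)$; and since every summand defining $U_d^T$ carries a factor $\lambda^k$ with $k \ge 1$, we have $U_d^T(\alpha, \lambda s, \lambda t) \le \lambda U_d^T(\alpha, s, t)$ for $0 < \lambda \le 1$. Taking $\lambda = 1/2$ gives
\[
U_d^{\{\bfx_i\}}(\alpha, s/2, t/2) \le U_d^T(\alpha, s/2, t/2) \le \tfrac{1}{2} U_d^T(\alpha, s, t) = r/2,
\]
so that $B_\alpha(\bfx_i, U_d^{\{\bfx_i\}}(\alpha, s/2, t/2)) \subset B_\alpha(\bfx_i, r/2)$.

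I would then apply Lemma~\ref{estmeas2+d} with measure parameters $s/8$ and $t/8$ (admissible since each $\bfx_i \in T$ lies in the prescribed product of balls), so that $4 \cdot (s/8) = s/2$ and $4 \cdot (t/8) = t/2$; this yields
\[
\mu_{m, s/8, t/8}\big(B_\alpha(\bfx_i, U_d^{\{\bfx_i\}}(\alpha, s/2, t/2))\big) \ge 4^{-d}\exp\big(-32 d s^{-2} - 8\|a\|_1 t^{-1}\big).
\]
Because $\mu_{m, s/8, t/8}$ is a probability measure and the balls $B_\alpha(\bfx_i, r/2)$ are disjoint,
\[
1 \ge \sum_{i=1}^M \mu_{m, s/8, t/8}\big(B_\alpha(\bfx_i, r/2)\big) \ge M 4^{-d}\exp\big(-32 d s^{-2} - 8\|a\|_1 t^{-1}\big),
\]
hence $M \le \exp(d\ln 4 + 32 d s^{-2} + 8\|a\|_1 t^{-1})$. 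Finally, since $s \in (0,1]$ gives $s^{-2} \ge 1$, the term $d\ln 4$ is absorbed into $Lds^{-2}$, producing the asserted bound.

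The only genuinely delicate point is matching radii with measure parameters: Lemma~\ref{estmeas2+d} dilates its arguments by a factor $4$ inside $U_d^{\{\bfx\}}$ and is stated with the singleton superscript, whereas the corollary uses the superscript $T$ at the undilated radius. The two scaling observations above are exactly what reconcile this, and the choice of $s/8, t/8$ for the measure is what makes the dilated singleton radius fit inside the packing half-radius $r/2$. Everything else is the routine volumetric counting argument.
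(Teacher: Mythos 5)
Your proposal is correct and is essentially the paper's own proof: the paper likewise bounds the covering number by a maximal packing, uses the same two observations ($U_d^{\{\bfx\}}(\alpha,\cdot,\cdot)\le U_d^T(\alpha,\cdot,\cdot)$ and $U_d^T(\alpha,s/2,t/2)\le\tfrac12 U_d^T(\alpha,s,t)$), and applies Lemma~\ref{estmeas2+d} with the measure $\mu_{n_1+\cdots+n_d,s/8,t/8}$ to get disjoint balls of measure at least $4^{-d}\exp(-32ds^{-2}-8\|a\|_1t^{-1})$, absorbing $4^d$ via $s^{-2}\ge 1$. The only difference is cosmetic: you spell out the rescaling and net-from-packing steps that the paper leaves implicit.
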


\begin{proof}
Obviously $U_{d}^{T}(\alpha,s,t)\geq
\sup_{\bfx\in T}U_{d}^{\{\bfx\}}(\alpha,s,t)$. Therefore by Lemma
\ref{estmeas2+d}
we have for any $\bfx\in T$,
\begin{equation}
\label{estmeasgen}
  \mu_{n_{1}+\ldots+n_{d},s,t}\Big(B_{\alpha}\Big(\bfx,
  U_{d}^{T}(\alpha,4s,4t)\Big)\Big)
  \geq 4^{-d}\exp(-ds^{-2}/2 - \|a\|_1t^{-1}).
\end{equation}
Suppose that there exist $\bfx_{1},\ldots,\bfx_{N}\in T$ such that
$\rho_{\alpha}(\bfx_{i},\bfx_{j})> U_{d}^{T}(\alpha,s,t)\geq
2U_{d}^{T}(\alpha,s/2,t/2)$ for $i\neq j$. Then sets
$B_{\alpha}(\bfx_{i},U_{d}^{T}(\alpha,s/2,t/2))$ are disjoint, so by
(\ref{estmeasgen}) we obtain $N\leq 4^{d}\exp(32ds^{-2}+ 8\|a\|_1t^{-1})$. Hence
\[
  N\big(T,\rho_{\alpha},U_{d,I}^{T}(\alpha,s,t)\big)\leq 4^{d}\exp(32ds^{-2}+ 8\|a\|_1t^{-1})
  \leq \exp(34ds^{-2}+ 8\|a\|_1t^{-1}).
\]
\end{proof}

We will need the following standard lemma, whose proof we provide for the sake of completeness.

\begin{lem}\label{lem_gaussian_exp_comparison}
For any $n$ and any norm $\alpha$ on $\R^n$, $\E\alpha(G_n) \le 3\E\alpha(\calE_n)$.
\end{lem}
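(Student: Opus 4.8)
The statement to prove is that for any norm $\alpha$ on $\R^n$,
\[
\E\alpha(G_n) \le 3\E\alpha(\calE_n),
\]
where $G_n$ is standard Gaussian and $\calE_n = (\xi_1,\ldots,\xi_n)$ has i.i.d.\ symmetric exponential coordinates with parameter $1$.

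My plan is to compare the two vectors by building $G_n$ out of $\calE_n$ in a way that passes cleanly through the norm. First I would recall that a standard Gaussian coordinate $g_i$ can be represented as $g_i = \frac{1}{\sqrt 2}\,\xi_i' \cdot |N_i|$-type products, but a cleaner route is to exploit the fact that $\calE_n$ is a scale mixture that dominates the Gaussian in the convex order after an appropriate normalization. Concretely, I would use that for a single coordinate, $g_i$ has the same distribution as $\sqrt{2}\,\varepsilon_i \sqrt{E_i}$ where $\varepsilon_i$ is a symmetric sign and $E_i$ is a standard (one-sided) exponential, while $\xi_i$ has the same distribution as $\varepsilon_i E_i$; the point is to control the Gaussian by the exponential through Jensen's inequality applied conditionally on the signs.

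The key step is the following symmetrization-and-conditioning argument. Let $(\varepsilon_i)$ be i.i.d.\ Rademacher signs, independent of everything. Since both $G_n$ and $\calE_n$ have symmetric coordinates, I may write $G_n \stackrel{d}{=} (\varepsilon_i |g_i|)_i$ and $\calE_n \stackrel{d}{=} (\varepsilon_i E_i)_i$ with $E_i$ standard exponentials. Now I would condition on the magnitudes and compare coordinatewise: the map $a \mapsto \E_\varepsilon \alpha((\varepsilon_i a_i)_i)$ is a norm-like symmetric convex function of the magnitude vector $(a_i)$, monotone under the convex order of each coordinate. Applying Jensen's inequality in the form $\E\alpha(G_n) = \E_\varepsilon \E_{|g|}\alpha((\varepsilon_i|g_i|)_i) \le \E_\varepsilon \alpha((\varepsilon_i \E|g_i|)_i)\cdot(\text{scaling})$ is too lossy; instead the correct comparison is to contract the Gaussian magnitudes into exponential magnitudes using $\E|g_i| = \sqrt{2/\pi}$ and $\E E_i = 1$, together with the fact that for symmetric convex functions a random vector with i.i.d.\ symmetric exponential entries dominates, up to the explicit constant $3$, one with i.i.d.\ symmetric Gaussian entries.

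The main obstacle is pinning down the clean coordinatewise convex-order comparison that yields exactly the constant $3$ rather than some larger universal constant, and making sure the argument goes through for an arbitrary norm $\alpha$ rather than a product structure. I expect the tidiest way to close this is via the representation $g_i \stackrel{d}{=} \xi_i \cdot R_i$ conditionally, or more directly by the standard observation that $\E\alpha(G_n)\le c\,\E\alpha(\calE_n)$ reduces, after symmetrization, to the one-dimensional tail comparison $\Pr(|g| \ge t)$ versus $\Pr(|\xi|\ge t)$ integrated against the norm, where the factor $3$ emerges from $\sqrt{2/\pi}\cdot(\text{a small numerical adjustment}) \le 3$. Once the coordinatewise domination in the convex order is established, applying it inside the expectation of the convex functional $\alpha$ and bounding the resulting numerical constant by $3$ finishes the proof.
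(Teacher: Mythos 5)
Your proposal has a genuine gap: its central step is circular, and the two concrete comparisons you float are false as stated. The assertion that ``for symmetric convex functions a random vector with i.i.d.\ symmetric exponential entries dominates, up to the explicit constant $3$, one with i.i.d.\ symmetric Gaussian entries'' is word for word the lemma to be proved, so invoking it establishes nothing. The distributional identity $g_i \stackrel{d}{=} \sqrt{2}\,\varepsilon_i\sqrt{E_i}$ is also false: $\sqrt{2E_i}$ has tail exactly $e^{-t^2/2}$, while $\p(|g_i|\ge t)<e^{-t^2/2}$ for $t>0$, so the two laws differ (only the one-sided stochastic domination $\p(|g|\ge t)\le \p(\sqrt{2E}\ge t)$ holds). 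Most importantly, the coordinatewise domination you propose to ``integrate against the norm'' fails outright for small $t$: for instance $\p(|g|\ge 1/2)\approx 0.617 > e^{-1/2}\approx 0.607$, so $|g|$ is \emph{not} stochastically dominated by $|\xi|$ and no pointwise coupling $|g_i|\le|\xi_i|$ exists. This is precisely the obstruction your sketch never resolves; the ``small numerical adjustment'' from which you expect the factor $3$ to emerge is where the entire proof lives.

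The paper's proof supplies the missing device: truncation at level $2$, exactly the threshold beyond which the tail comparison $\p(|g|\ge t)\le e^{-t^2/2}\le e^{-t}=\p(|\xi|\ge t)$ becomes valid. Writing $G_n\stackrel{d}{=}(\varepsilon_i|g_i|)_i$ and splitting $|g_i| = |g_i|\ind{|g_i|\le 2}+|g_i|\ind{|g_i|>2}$ by the triangle inequality, the bounded part is at most $2\E\alpha(\varepsilon_1,\ldots,\varepsilon_n)$ by the contraction principle, and this is at most $\E\alpha(\calE_n)$ by Jensen because $\E|\xi_i|=1$ (note it is $\E|\xi|=1$, not $\E|g|=\sqrt{2/\pi}$, that enters); the tail part $|g_i|\ind{|g_i|>2}$ \emph{is} stochastically dominated by $|\xi_i|$, so after coupling it pointwise below $|\xi_i|$ and applying contraction once more it contributes $\E\alpha(\calE_n)$, for a total of $3\E\alpha(\calE_n)$. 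Your list of ingredients --- Rademacher symmetrization, contraction, Jensen via the exponential mean, the one-dimensional tail comparison --- is correct, but without the split that reconciles the failure of domination for small $t$ with its validity for $t\ge 2$, they do not assemble into a proof. (Your representation idea is repairable: couple $|g_i|\le\sqrt{2E_i}$ using the genuine domination above, bound $\sqrt{2E_i}\le(1+E_i)/\sqrt{2}$ by the AM--GM inequality, then use contraction, the triangle inequality, and Jensen with $\E E_i=1$; this yields the even better constant $\sqrt{2}$, but it requires replacing your false equality in distribution by the one-sided domination and carrying out precisely such a two-term split.)
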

\begin{proof}
Let $g$ and $\xi$ be respectively  standard Gaussian and symmetric exponential random variables. For $t\ge 0$ we have $\p(|g|\ge t) \le e^{-t^2/2}$ and $\p(|\xi|\ge t) = e^{-t}$. Thus for $t \ge 2$ we have $\p(|g|\ge t) \le \p(|\xi|\ge t)$.

Consider now $G_n = (g_1,\ldots,g_n)$, $\calE_n=(\xi_1,\ldots,\xi_n)$. Define moreover independent random variables $X_1,\ldots,X_n$ distributed as $|g|\ind{|g|> 2}$. Since for all $t \ge 0$, $\p(X_i \ge t) \le \p(|\xi_i|\ge t)$ we can assume that $X_i$'s, $g_i$'s and $\xi_i$'s are defined on the same probability space together with a sequence $\varepsilon_1,\ldots,\varepsilon_n$ of independent Rademacher variables, in such a way that for all $i$, $X_i \le |\xi_i|$ pointwise,  $g_i$'s, $\xi_i$'s, $\varepsilon_i$'s are independent and $X_i$'s are independent of $\varepsilon_i$'s. We can write
\begin{align*}
\E\alpha(G_n) =& \E\alpha(\varepsilon_1|g_1|,\ldots,\varepsilon_n|g_n|) \\
\le& \E\alpha(\varepsilon_1|g_1|\ind{|g_1|\le 2},\ldots,\varepsilon_n|g_n|\ind{|g_n|\le 2}) \\
&+ \E\alpha(\varepsilon_1|g_1|\ind{|g_1|> 2},\ldots,\varepsilon_n|g_n|\ind{|g_n|> 2})\\
\le& 2\E\alpha(\varepsilon_1,\ldots,\varepsilon_n) + \E\alpha(\varepsilon_1X_1,\ldots,\varepsilon_nX_n)\\
\le&2\E_\varepsilon \alpha(\varepsilon_1\E_\xi|\xi_1|,\ldots,\varepsilon_n\E_\xi|\xi_n|) + \E\alpha(\varepsilon_1|\xi_1|,\ldots,\varepsilon_n|\xi_n|)\\
\le &3\E\alpha(\xi_1,\ldots,\xi_n),
\end{align*}
where in the second and third inequality we used (conditionally) the contraction principle.
\end{proof}

Corollary \ref{estentrgen} together with Lemma \ref{lem_gaussian_exp_comparison} yield

\begin{cor}\label{estentrgen1}
For any $T \subset (B_2^n + aB_1^n)^d$ and any $t \in (0,1]$,
\begin{displaymath}
N(T,\rho_\alpha,V_d^T(\alpha,t)) \le \exp(L_dt^{-2} + L_dat^{-1}).
\end{displaymath}
\end{cor}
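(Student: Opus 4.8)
The goal is to derive Corollary \ref{estentrgen1} from Corollary \ref{estentrgen} and Lemma \ref{lem_gaussian_exp_comparison}. The plan is to specialize the two-parameter entropy bound of Corollary \ref{estentrgen} by collapsing the Gaussian scale $s$ into the exponential scale $t$, which is legitimate precisely because Lemma \ref{lem_gaussian_exp_comparison} lets us dominate Gaussian averages by exponential ones.

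First I would observe that the indexing set $T \subset (B_2^n + aB_1^n)^d$ is exactly of the form required by Corollary \ref{estentrgen} (all $n_l = n$ and all $a_l = a$, so $\|a\|_1 = da$). Second, in the radius $U_d^T(\alpha,s,t)$ every term $U_{I,J}^T(\alpha)$ with $I \neq \emptyset$ carries an expected value of a norm applied to a tensor where the coordinates indexed by $I$ use Gaussian factors $g_{i_k}^k$ and those indexed by $J$ use exponential factors $\xi_{i_k}^k$. Applying Lemma \ref{lem_gaussian_exp_comparison} coordinate by coordinate (conditionally on the remaining factors, treating the norm as a norm in the block of variables being replaced) lets me replace each Gaussian factor by an exponential one at the cost of a factor of $3$ per replacement. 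Consequently, taking $s = t$, each mixed coefficient satisfies $s^{\#I}t^{\#J}U_{I,J}^T(\alpha) \le 3^{\#I} t^{\#(I\cup J)} V_{I\cup J}^T(\alpha)$, and summing over all pairs $(I,J)$ with $I\cup J$ fixed of cardinality $k$ produces a bound of the form $U_d^T(\alpha,t,t) \le L_d V_d^T(\alpha,t)$, since there are only finitely many such splittings and the constant $3^{\#I}\le 3^d$ is absorbed into $L_d$.

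With this comparison in hand, the remaining step is bookkeeping on the entropy bound itself. Setting $s = t$ in Corollary \ref{estentrgen} gives
\begin{displaymath}
N(T,\rho_\alpha, U_d^T(\alpha,t,t)) \le \exp(Ldt^{-2} + L\|a\|_1 t^{-1}) = \exp(Ldt^{-2} + Ldat^{-1}).
\end{displaymath}
Since $U_d^T(\alpha,t,t) \le L_d V_d^T(\alpha,t)$ implies that a covering at the larger radius $L_d V_d^T(\alpha,t)$ is no finer than one at radius $U_d^T(\alpha,t,t)$, the monotonicity of covering numbers in the radius gives $N(T,\rho_\alpha, L_d V_d^T(\alpha,t)) \le N(T,\rho_\alpha, U_d^T(\alpha,t,t))$; rescaling the radius by the constant $L_d$ (which at worst changes the constants in the exponent, not the form of the bound) yields $N(T,\rho_\alpha, V_d^T(\alpha,t)) \le \exp(L_d t^{-2} + L_d a t^{-1})$, as claimed.

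The only genuinely delicate point is the norm comparison at the level of the tensor quantities $U_{I,J}^T$ versus $V_{I\cup J}^T$: Lemma \ref{lem_gaussian_exp_comparison} is stated for a single vector of Gaussians versus a single vector of exponentials, whereas here the Gaussian variables sit inside a multilinear tensor alongside the deterministic factors $x^k$ and the exponential factors. I expect this to be the main obstacle, but it is handled cleanly by replacing the Gaussian blocks one index $k \in I$ at a time, each time conditioning on all other randomness and regarding $\alpha$, with the remaining tensor factors frozen, as a norm on $\R^n$ in the $k$-th block; the independence structure of the $g_{i_k}^k$ and $\xi_{i_k}^k$ permits Fubini and a successive application of the lemma, accruing the harmless factor $3^{\#I}$.
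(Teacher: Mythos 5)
Your proposal is correct and takes essentially the same route as the paper, which derives Corollary \ref{estentrgen1} in a single line from Corollary \ref{estentrgen} and Lemma \ref{lem_gaussian_exp_comparison}; your conditional, block-by-block replacement of Gaussian factors by exponential ones (via Fubini, at cost $3^{\#I}$, giving $U_d^T(\alpha,t,t)\le L_d V_d^T(\alpha,t)$) is exactly the intended use of the lemma. The only point you compress --- absorbing the constant $L_d$ on the radius --- is legitimately handled by substituting $t/L_d$ for $t$ in Corollary \ref{estentrgen} and noting that $V_d^T(\alpha,t/L_d)\le V_d^T(\alpha,t)/L_d$ since every term carries $t^k$ with $k\ge 1$, which only inflates the constants in the exponent, as you indicate.
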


We would like to remark that by applying  Corollary \ref{estentrgen} with $t_ia_i$ instead of $a_i$ and letting $t_i$ tend to 0 or infinity we can obtain similar results for Cartesian products of the form
$\times_{i=1}^d K_i$ where $K_i$ is either $B_2^n$ or $a_i B_1^n$.  Such results can be also obtained directly by following the proof of Corollary \ref{estentrgen} and using Lemmas \ref{estmeasgauss} and \ref{estmeasexp} instead of Lemma \ref{estmeas}. We will need such entropy estimates only for $d=1$ and $K = a B_1^n$. This case, described in the next corollary, follows just from Lemma \ref{estmeasexp}.

\begin{cor}\label{entropy_B_1^n}
For any $a > 0$, $T \subset aB_1^n $ and $t \in (0,1]$,
\begin{displaymath}
N(T,\rho_\alpha,t \E\alpha(\calE)) \le 2 \exp(8at^{-1}).
\end{displaymath}
\end{cor}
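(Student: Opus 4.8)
The plan is to follow the packing argument from the proof of Corollary \ref{estentrgen}, but to invoke Lemma \ref{estmeasexp} directly (with the single measure $\nu_{n,t'}$) rather than the convolution Lemma \ref{estmeas}. Note first that for $d=1$ the metric reduces to $\rho_\alpha(\bfx,\bfy) = \alpha(x_1 - y_1)$, so $B_\alpha(y,r)$ is just the usual $\alpha$-ball of radius $r$ centered at $y$, and $\calE_n$ plays the role of $\calE$ in the statement.

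The crucial point is the correct choice of scale. Applying Lemma \ref{estmeasexp} with $\alpha_1 = \alpha_2 = \alpha$ and with the parameter $t/8$ in place of $t$, I obtain for every $y \in aB_1^n$
\[
\nu_{n,t/8}\Big(x \colon \alpha(x-y) \le \tfrac{t}{2}\E\alpha(\calE_n)\Big) \ge \tfrac12 e^{-8a/t},
\]
since $4\cdot\frac{t}{8}\E\alpha(\calE_n) = \frac{t}{2}\E\alpha(\calE_n)$ and $a/(t/8) = 8a/t$. In other words, each ball $B_\alpha(y, \frac{t}{2}\E\alpha(\calE_n))$ centered at a point of $aB_1^n$ carries $\nu_{n,t/8}$-measure at least $\frac12 e^{-8a/t}$.

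Now I would run the standard packing estimate. Let $\bfx_1,\ldots,\bfx_N \in T$ be a maximal subset that is $t\E\alpha(\calE_n)$-separated, i.e.\ $\rho_\alpha(\bfx_i,\bfx_j) > t\E\alpha(\calE_n)$ for $i \ne j$; by maximality this set is a $t\E\alpha(\calE_n)$-net of $T$, so it suffices to bound $N$. By the triangle inequality the balls $B_\alpha(\bfx_i, \frac{t}{2}\E\alpha(\calE_n))$ are pairwise disjoint, and since each center lies in $T \subset aB_1^n$ the previous display gives each of them $\nu_{n,t/8}$-measure at least $\frac12 e^{-8a/t}$. As $\nu_{n,t/8}$ is a probability measure, summing over the disjoint balls yields $N\cdot\frac12 e^{-8a/t} \le 1$, hence $N \le 2e^{8a/t} = 2\exp(8at^{-1})$, which is the desired bound; in particular the maximal separated set is automatically finite.

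There is no serious obstacle here: the argument is essentially a verbatim specialization of the packing scheme already used for Corollary \ref{estentrgen}, the only genuine content being the choice of the auxiliary scale $t' = t/8$, which is forced by the requirement that the measure-carrying radius $4t'\E\alpha(\calE_n)$ supplied by Lemma \ref{estmeasexp} coincide with the half-separation $\frac{t}{2}\E\alpha(\calE_n)$ needed to make the balls disjoint. The constraint $t \in (0,1]$ plays no role in the estimate itself and is retained only for consistency with the surrounding statements.
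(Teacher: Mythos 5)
Your proof is correct and is essentially the paper's own argument: the authors state that this corollary ``follows just from Lemma \ref{estmeasexp},'' meaning exactly the packing scheme of Corollary \ref{estentrgen} specialized to $d=1$ with the single measure $\nu_{n,t/8}$, and your choice of scale $t'=t/8$ reproduces the constant $2\exp(8at^{-1})$ precisely. No gaps; the maximal-separated-set-is-a-net step and the disjointness of the half-radius balls are both standard and correctly executed.
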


\subsection{Concentration of measure for linear combinations of independent random variables with log-concave tails}

Similarly as in \cite{L2}, the proof of our main results will rely on induction with respect to $d$, the order of the chaos variable.
The base of the induction, i.e. the case $d=1$ was obtained in \cite{GK} by Gluskin and Kwapie\'{n} and later extended in \cite{ L3} to linear combinations of independent symmetric random variables with log-concave tails with vector valued coefficients. Below we present the more general vector-valued version, together with some of its rather standard consequences, which provide the toolbox to be used in the proof. All the lemmas below contain the special case of Gaussian variables and reduce in this case to standard facts about the concentration and integrability for suprema of Gaussian processes.

In the rest of this section we will use the assumptions and notation introduced in Section \ref{Def_not} specialized to the case of $d=1$. In particular we will suppress upper indices (see the remark after the definition of the functions $\hat{N}_i^j$).

\begin{lem}[Theorem 1 in \cite{L3}] \label{tail_est_log_conc}
For any bounded set $T \subset \R^n$ and all $p \ge 2$ we have
\begin{align*}
\frac{1}{L}\Big\|\sup_{t \in T} \Big|\sum_{i=1}^n t_i X_i\Big|\Big\|_p \le & \Big\|\sup_{t \in T} \Big|\sum_{i=1}^n t_i X_i\Big|\Big\|_1 \\
&+ \sup \Big\{\sum_{i=1}^n t_ix_i\colon t\in T, x\in \R^n, \sum_{i=1}^n\hat{N}_i(x_i)\le p\Big\} \\
\le& L\Big\|\sup_{t \in T} \Big|\sum_{i=1}^n t_i X_i\Big|\Big\|_p.
\end{align*}
Thus, for any $u >0$,
\begin{align*}
\p\Big(&\sup_{t \in T} \Big|\sum_{i=1}^n t_i X_i\Big| \ge \\
&L\Big[ \Big\|\sup_{t\in T}\Big|\sum_{i=1}^n t_i X_i\Big|\Big\|_1 + \sup \Big\{\sum_{i=1}^n t_ix_i\colon t\in T, x\in \R^n, \sum_{i=1}^n\hat{N}_i(x_i)\le u\Big\}\Big]\Big) \le e^{-u}.
\end{align*}
\end{lem}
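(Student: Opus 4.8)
Throughout write $S:=\sup_{t\in T}\bigl|\sum_{i=1}^n t_iX_i\bigr|$ and, for $u>0$,
\[
\Delta_u:=\sup\Bigl\{\sum_{i=1}^n t_ix_i\colon t\in T,\ x\in\R^n,\ \sum_{i=1}^n\hat{N}_i(x_i)\le u\Bigr\}.
\]
The plan is to show $\|S\|_p\sim_d\E S+\Delta_p$, treating the two inequalities separately. The lower bound (that $\E S+\Delta_p\le L\|S\|_p$) is the easy direction: $\E S=\|S\|_1\le\|S\|_p$ by monotonicity of $L_q$-norms, while for the Lagrangian term I would fix $t\in T$ and invoke the single-variable Gluskin--Kwapie\'n estimate of \cite{GK}, giving $\sup\{\sum_i t_ix_i\colon\sum_i\hat{N}_i(x_i)\le p\}\le L\|\sum_i t_iX_i\|_p\le L\|S\|_p$ since $|\sum_i t_iX_i|\le S$ pointwise; taking the supremum over $t\in T$ closes this direction.

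The whole content is therefore the upper bound $\|S\|_p\le L(\E S+\Delta_p)$, which I would deduce from the deviation inequality $\Pr(S\ge L(\E S+\Delta_u))\le e^{-u}$. Two routine observations turn it into the moment bound: first, $\Delta$ is doubling, $\Delta_{2u}\le2\Delta_u$, which follows by applying the convexity estimate $\hat{N}_i(x/2)\le\frac12\hat{N}_i(x)$ (valid since $\hat{N}_i$ is convex with $\hat{N}_i(0)=0$) to a near-optimal $x$ for $\Delta_{2u}$; second, $u\mapsto\Delta_u$ is nondecreasing. Writing $\|S\|_p^p=\int_0^\infty ps^{p-1}\Pr(S\ge s)\,ds$, splitting at $s=L(\E S+\Delta_p)$ and using doubling to bound $\Pr(S\ge2^kL(\E S+\Delta_p))\le e^{-2^kp}$, the tail contributes a convergent geometric-type series and one obtains $\|S\|_p\le L(\E S+\Delta_p)$; the deviation inequality is itself the tail bound asserted in the lemma.

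The crux is the deviation inequality, which I would phrase as a concentration statement for the $1$-homogeneous functional $x\mapsto\sup_{t\in T}|\langle t,x\rangle|$ evaluated at $X=(X_i)_i$, in the form
\[
\Pr(S\ge\E S+r)\le\exp\Bigl(-\frac1L\inf\Bigl\{\sum_{i=1}^n\hat{N}_i(x_i)\colon\sup_{t\in T}\sum_i t_ix_i\ge r\Bigr\}\Bigr).
\]
Its rate function is exactly the convex conjugate pairing $T$ against the cost $x\mapsto\sum_i\hat{N}_i(x_i)$, and by linear-programming duality the requirement that this rate exceed $u$ is, up to the constant $L$, the same as $r\le L\Delta_u$, which is the form used above. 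The function $\hat{N}_i$ is the correct cost because it interpolates the two regimes that govern $X_i$: the quadratic part $\hat{N}_i(x)=x^2$ for $|x|\le1$ produces the sub-Gaussian fluctuations of $S$ about $\E S$ (exactly the $\sqrt{u}$-scale term seen in the Gaussian case of the Remark via Lipschitz concentration), whereas the part $\hat{N}_i=N_i$ for $|x|>1$ encodes the genuinely sub-exponential cost of a single large coordinate, since by the normalization $N_i(1)=1$ the variable $N_i(|X_i|)$ has a standard exponential tail.

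To prove the concentration inequality I would symmetrize and, after conditioning on the signs, view $S$ as the supremum of a Rademacher process $\sum_i\varepsilon_it_i|X_i|$, so that the fluctuations arise from two independent sources, the signs and the magnitudes, each controllable by Talagrand's concentration inequality for suprema of sums of independent symmetric random vectors; the magnitude contribution is handled through a Chernoff/Laplace-transform estimate that converts the convexity of $N_i$ into the additive rate $\sum_i\hat{N}_i(x_i)$. The main obstacle is precisely this inequality with the correct mixed Gaussian--exponential deviation profile together with the identification of its rate function with $\sum_i\hat{N}_i$: one must capture the near-Gaussian behaviour at moderate deviations and the exponential behaviour driven by a single large coordinate simultaneously, and verify that the deterministic Lagrangian $\Delta_u$ reproduces this profile. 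Everything else --- the lower bound, the doubling of $\Delta_u$, and the passage between tails and moments --- is routine once this estimate is in hand.
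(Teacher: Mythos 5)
You should first note that the paper contains no proof of this lemma at all: it is imported verbatim as Theorem 1 of \cite{L3}, so the only meaningful comparison is with Lata{\l}a's proof there. The outer layers of your plan are correct and in fact mirror that route: the lower bound by applying the Gluskin--Kwapie\'n estimate \cite{GK} to each fixed $t\in T$ and taking the supremum; the equivalence between the inf-form rate function $I(r)=\inf\{\sum_i\hat{N}_i(x_i)\colon\sup_{t\in T}\langle t,x\rangle\ge r\}$ and the Lagrangian $\Delta_u$ (this is an elementary complementary-slackness observation, no LP duality needed); the scaling $\Delta_{tu}\le t\Delta_u$ for $t\ge 1$, which is exactly the paper's inequality (\ref{no_abs}) --- though your justification is slightly off, since $\hat{N}_i$ itself need \emph{not} be convex (for $N_i(t)=t$ the branches $t^2$ and $t$ meet at $1$ with decreasing slope); the inequality survives because $N_i$ is convex with $N_i(0)=0$ and $N_i(t)\ge t$ for $t\ge 1$; and the passage from the deviation inequality to the $p$-th moment bound by splitting the tail integral is routine, as you say.

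The genuine gap is the concentration inequality $\Pr\big(S\ge \E S+ L\Delta_u\big)\le e^{-u}$, which is the entire content of the lemma, and the route you sketch for it would fail as stated. Talagrand's concentration inequality for suprema of empirical processes applies to uniformly bounded summands and produces a Bernstein-type profile $\exp(-\frac{1}{L}\min(r^2/V,\,r/b))$ governed by a weak variance and a sup-norm bound; no truncation-plus-Chernoff bookkeeping of this kind reproduces the rate $\sum_i\hat{N}_i(x_i)$, which must account for \emph{many} simultaneously large coordinates (already for a singleton $T=\{t\}$ and exponential $X_i$'s the correct deviation scale is $\sqrt{u}\,\|t\|_2+u\max_i|t_i|$, i.e.\ enlargement by $\sqrt{u}B_2^n+uB_1^n$, not a two-parameter Bernstein bound uniform over $T$). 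Likewise, conditioning on the magnitudes $|X_i|$ leaves a Rademacher process whose sub-Gaussian scale $\sup_{t\in T}(\sum_i t_i^2X_i^2)^{1/2}$ is itself a random supremum of the same difficulty, so the two ``independent sources'' do not decouple as you suggest. What is actually needed --- and what \cite{L3} invokes --- is Talagrand's isoperimetric (two-level) inequality for product measures with log-concave tails: if $\Pr(X\in A)\ge 1/2$ then $\Pr\big(X\in A+\{y\colon\sum_i\hat{N}_i(y_i)\le u\}\big)\ge 1-Le^{-u/L}$. Applied to the sublevel set $A=\{S\le M\}$ of a median, and noting that the enlargement shifts $S$ by at most $\Delta_u$ while $M\le 2\E S$, this yields precisely your deviation inequality; but it is a deep isoperimetric theorem, not derivable from the tools you name, and your proposal --- which candidly flags this step as ``the main obstacle'' --- supplies no substitute for it. The attempt is therefore a correct reduction with the keystone missing.
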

\paragraph{Remark} Using the notation of Section \ref{Def_not}, we can write
\begin{displaymath}
\sup \{\sum_{i=1}^n t_ix_i\colon t\in T, x\in \R^n, \sum_{i=1}^n\hat{N}_i(x_i)\le p\} = \sup_{t\in T}\|t\|_{\{1\},p}^{\cal N},
\end{displaymath}
which shows that the above lemma is indeed a strengthening of the case $d=1$ of Theorems \ref{lower} and \ref{upper}.

\begin{lem} \label{union_of_sets}Consider arbitrary sets\ $T_1,\ldots,T_m \subset \R^n$  and let $T=\bigcup_{j=1}^m T_j$. Then
\begin{align*}
\E \sup_{t\in T} \sum_{i=1}^n t_i X_i \le &L\Big(\max_{j\le m}\E\sup_{t\in T_j}\sum_{i=1}^n t_iX_i \\
&+ \sup\{\sum_{i=1}^n (t_i -s_i)x_i\colon t,s\in T,x\in \R^n, \sum_{i=1}^n \hat{N}_i(x_i) \le \log m\}\Big)
\end{align*}
\end{lem}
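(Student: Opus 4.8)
The plan is to convert the problem into a tail bound for each $T_j$ via Lemma~\ref{tail_est_log_conc}, combine the $m$ sets by a union bound tuned to confidence level $1/m$, and then integrate. Write $Z_j=\sup_{t\in T_j}\sum_{i=1}^n t_iX_i$, so that $\sup_{t\in T}\sum_i t_iX_i=\max_{j\le m}Z_j$. The first step is to \emph{center}: fix any $t^0\in T$ and note that, since the $X_i$ are symmetric and hence mean zero, $\E\sum_i t^0_iX_i=0$, so replacing each $T_j$ by $T_j-t^0$ changes none of the relevant expectations, i.e. $\E\sup_{t\in T_j}\sum_i(t_i-t^0_i)X_i=\E\sup_{t\in T_j}\sum_i t_iX_i$. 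This centering is what will produce the \emph{differences} $t_i-s_i$ in the final bound. Using $\sup|\cdot|\le\sup(\cdot)+\sup(-\cdot)$ together with $X\stackrel{d}{=}-X$, the $L_1$-norm of each centered supremum of absolute values is controlled: $\E\sup_{t\in T_j}|\sum_i(t_i-t^0_i)X_i|\le 2\E\sup_{t\in T_j}\sum_i t_iX_i\le 2M$, where $M:=\max_{k\le m}\E\sup_{t\in T_k}\sum_i t_iX_i$.

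Next I apply Lemma~\ref{tail_est_log_conc} to each centered set $T_j-t^0$ at level $u=\log m+s$, $s\ge 0$. Introducing the diameter-type quantity
\[
D(u):=\sup\Big\{\sum_{i=1}^n(t_i-s_i)x_i\colon t,s\in T,\ x\in\R^n,\ \sum_{i=1}^n\hat N_i(x_i)\le u\Big\},
\]
and bounding the sup over $t\in T_j$ against $t^0$ by the sup over all $t,s\in T$, Lemma~\ref{tail_est_log_conc} gives $\p\big(Z_j'\ge L[2M+D(\log m+s)]\big)\le e^{-(\log m+s)}$ for each $j$, where $Z_j'=\sup_{t\in T_j}\sum_i(t_i-t^0_i)X_i$. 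Since the threshold does not depend on $j$, a union bound over the $m$ sets yields
\[
\p\Big(\sup_{t\in T}\sum_i(t_i-t^0_i)X_i\ge L\big[2M+D(\log m+s)\big]\Big)\le m\,e^{-(\log m+s)}=e^{-s}.
\]

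The key structural fact, and the step I expect to be the main obstacle, is the \emph{subadditivity} $D(u+v)\le D(u)+D(v)$. I would derive this from convexity of the $\hat N_i$ at the level of the constraint sets $K_u:=\{x:\sum_i\hat N_i(x_i)\le u\}$: given $x\in K_{u+v}$, the split $x=\theta x+(1-\theta)x$ with $\theta=u/(u+v)$ satisfies $\hat N_i(\theta x_i)\le\theta\hat N_i(x_i)$ (convexity together with $\hat N_i(0)=0$), so $\theta x\in K_u$ and $(1-\theta)x\in K_v$; thus $K_{u+v}\subseteq K_u+K_v$, and taking suprema gives subadditivity of $D$. Consequently $D(\log m+s)\le D(\log m)+D(s)$ splits the estimate into the target term $D(\log m)$ and an integrable remainder; subadditivity also gives $D(s)\le(1+s)D(1)$, so the tail becomes a clean subexponential bound $\p(\sup_{t\in T}\sum_i(t_i-t^0_i)X_i\ge A+cs)\le e^{-s}$ with $A=L(2M+D(\log m))$ and $c=LD(1)$.

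Finally I would integrate $\E\sup_{t\in T}\sum_i(t_i-t^0_i)X_i=\int_0^\infty\p(\,\cdot\ge r\,)\,dr$ against this subexponential tail, obtaining a bound of order $A+c$. To close, I absorb the stray $D(1)$ using subadditivity once more: for $m\ge 2$ one has $D(1)\le D(\log 2)+D(1-\log 2)\le 2D(\log 2)\le 2D(\log m)$, so $c=LD(1)\le 2LD(\log m)$ and the whole estimate collapses to $L(M+D(\log m))$, which is exactly the claimed inequality; the case $m=1$ is trivial since then $D(\log 1)=0$ and the bound reduces to an identity up to the constant. The only genuinely delicate points are the subadditivity of $D$ via convexity of $\hat N_i$ and the verification that centering preserves the relevant expectations while producing the difference term $t_i-s_i$; everything else is a standard union-bound-and-integrate computation built on Lemma~\ref{tail_est_log_conc}.
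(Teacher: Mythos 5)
Your overall scheme is the same as the paper's (center the process at a fixed point, apply Lemma \ref{tail_est_log_conc} to each $T_j$ at level roughly $\log m$, take a union bound, integrate the tail, and rescale the constraint level via $\hat N_i(\theta x)\le\theta \hat N_i(x)$), but there is a genuine gap at the centering step. Your claim $\E\sup_{t\in T_j}\big|\sum_i(t_i-t^0_i)X_i\big|\le 2\,\E\sup_{t\in T_j}\sum_i t_iX_i\le 2M$ is false unless $t^0\in T_j$. The symmetrization $\E\sup_t|f_t|\le 2\,\E\sup_t (f_t)_+$ is fine, but replacing $\E\sup_t(f_t)_+$ by $\E\sup_t f_t$ requires $\sup_{t\in T_j}\sum_i(t_i-t^0_i)X_i\ge 0$ almost surely, i.e. $0\in T_j-t^0$; since $t^0$ is one fixed point of $T=\bigcup_j T_j$, this fails for all but (at most) one $j$. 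Concretely, take $T_j=\{z\}$ with $z\ne t^0$: the left side is $\E\big|\sum_i(z_i-t^0_i)X_i\big|>0$ while the right side is $2\,\E\sum_i z_iX_i=0$. This matters, because the threshold in Lemma \ref{tail_est_log_conc} contains the first moment of the absolute-value supremum of the \emph{centered} process, which cannot be bounded by $M$ alone; so the thresholds in your union bound are too small as stated.

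The repair is exactly what the paper spends the second half of its proof on: choose for each $j$ a point $z^j\in T_j$ and write $\E\sup_{t\in T_j}\big|\sum_i(t_i-t^0_i)X_i\big|\le \E\sup_{t\in T_j}\big|\sum_i(t_i-z^j_i)X_i\big|+\E\big|\sum_i(z^j_i-t^0_i)X_i\big|$. The second term is at most $L\|z^j-t^0\|_2\le L(\log 2)^{-1}D(\log m)$ for $m\ge2$ (the $X_i$ have bounded variances, and since $\hat N_i(u)=u^2$ for $|u|\le1$ the Euclidean norm of a difference of points of $T$ is dominated by the difference term after rescaling by $\sqrt{\log 2}$), while in the first term symmetry together with $z^j\in T_j$ legitimately removes the absolute value, yielding $2\,\E\sup_{t\in T_j}\sum_i t_iX_i\le 2M$. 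With this emendation your argument closes and coincides in substance with the paper's. One further small slip: $\hat N_i$ is \emph{not} convex in general (for $N_i(t)=t$ its slope drops from $2$ to $1$ at $t=1$), so $\hat N_i(\theta x)\le\theta\hat N_i(x)$ does not follow from ``convexity of $\hat N_i$''; it is the paper's inequality (\ref{no_abs}), derived from convexity of $N_i$ (giving $N_i(t)\ge t$ for $t\ge 1$ under the normalization), so your subadditivity $K_{u+v}\subseteq K_u+K_v$ and $D(u+v)\le D(u)+D(v)$ survives with the corrected justification.
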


\begin{proof}

For $m= 1$ the theorem is obvious, so we will assume that $m \ge 2$.
Let us fix arbitrary $s \in T$. Since $\E X_i = 0$, we have
\begin{displaymath}
\E \sup_{t\in T} \sum_{i=1}^n t_i X_i = \E \max_{j\le m} \sup_{t\in T_j} \sum_{i=1}^n (t_i -s_i) X_i \le \E \max_{j\le m} \sup_{t\in T_j} \Big|\sum_{i=1}^n (t_i -s_i) X_i\Big|.
\end{displaymath}
Let $A = \sup\{\sum_{i=1}^n (t_i -s_i)x_i\colon t \in T,x\in \R^n, \sum_{i=1}^n \hat{N}_i(x_i) \le \log m\}$ and note that by the convexity of $N_i$ and the definition of $\hat{N}_i$, for any $u \ge 1$,
\begin{align}\label{no_abs}
\hat{N}_i(x/u) \le \hat{N}_i(x)/u,
\end{align}
which implies that for $u \ge 1$,
\begin{displaymath}
\sup\Big\{\sum_{i=1}^n (t_i -s_i)x_i\colon t \in T,x\in \R^n, \sum_{i=1}^n \hat{N}_i(x_i) \le 2u\log m\Big\} \le 2uA.
\end{displaymath}
Thus by Lemma \ref{tail_est_log_conc} and the union bound, for any $u \ge 1$,
\begin{align*}
\p\Big(\max_{j\le m} &\sup_{t\in T_j} \Big|\sum_{i=1}^n (t_i -s_i) X_i\Big| \ge L\max_j \E \sup_{t\in T_j} \Big|\sum_{i=1}^n (t_i -s_i) X_i\Big| + LuA\Big) \\
&\le m e^{-2u  \log m} \le \frac{1}{2^u},
\end{align*}
which by integration by parts gives
\begin{align*}
\E \sup_{t\in T}\Big| \sum_{i=1}^n (t_i - s_i)X_i \Big| \le &L\Big(\max_{j\le m}\E\sup_{t\in T_j}\Big|\sum_{i=1}^n (t_i-s_i)X_i\Big| + A\Big).
\end{align*}
To finish the proof of the lemma it is therefore sufficient to show that for all $j \le m$,
\begin{align}\label{no_abs_1}
\E \sup_{t\in T_j}\Big| \sum_{i=1}^n (t_i - s_i)X_i \Big| \le L\Big(\E\sup_{t\in T_j}\sum_{i=1}^n t_iX_i + A\Big).
\end{align}
Let us choose any $z \in T_j$. We have
\begin{align}\label{no_abs_2}
\E \sup_{t\in T_j}\Big| \sum_{i=1}^n (t_i - s_i)X_i \Big| &\le \E \sup_{t\in T_j}\Big| \sum_{i=1}^n (t_i - z_i)X_i \Big| +
\E\Big| \sum_{i=1}^n (z_i - s_i)X_i \Big| \nonumber\\
&\le \E \sup_{t\in T_j}\Big| \sum_{i=1}^n (t_i - z_i)X_i \Big| + L\Big(\sum_{i=1}^n (z_i - s_i)^2\Big)^{1/2} \nonumber\\
& \le \E \sup_{t\in T_j}\Big| \sum_{i=1}^n (t_i - z_i)X_i \Big| + LA,
\end{align}
where in the first inequality we used the fact that variances of $X_i$'s are bounded by a universal constants, whereas in the second one, the estimate $(\sum_{i=1}^n (z_i - s_i)^2)^{1/2} = \sup\{\sum_{i=1}^n (z_i-s_i)u_i \colon \sum_{i=1}^n u_i^2 \le 1\} \le (\log 2)^{-1}A$ for $m \ge 2$, which is an easy consequence of (\ref{no_abs}) and the fact that $\hat{N}_i(u) = u^2$ for $|u|\le 1$.

Let us now notice that
\begin{align*}
\E& \sup_{t\in T_j}\Big| \sum_{i=1}^n (t_i - z_i)X_i \Big| = \E\max \Big(\sup_{t\in T_j} (\sum_{i=1}^n (t_i - z_i)X_i)_+, \sup_{t\in T_j} (\sum_{i=1}^n (t_i - z_i)X_i)_-\Big)\\
&\le \E \sup_{t\in T_j} (\sum_{i=1}^n (t_i - z_i)X_i)_+ + \E \sup_{t\in T_j} (\sum_{i=1}^n (t_i - z_i)X_i)_-\\
&= 2 \E \sup_{t\in T_j} (\sum_{i=1}^n (t_i - z_i)X_i)_+=  2\E \sup_{t\in T_j} \sum_{i=1}^n (t_i - z_i)X_i,
\end{align*}
where in the second inequality we used the symmetry of $X_i$'s and in the last one the fact that $z \in T_j$.

The above inequality together with (\ref{no_abs_2}) proves (\ref{no_abs_1}) and ends the proof of the lemma.
\end{proof}

Let us finish this section with a version of Lemma \ref{union_of_sets} in the special case of Gaussian variables. It improves on the inequality of Lemma \ref{union_of_sets}, as it asserts that the constant in front of $\max_j\E\sup_{x\in T_j}\sum_{i=1}^n x_i g_i$ may be taken to be equal to one.
This result is again pretty standard and its proof can be found e.g. in \cite{L2} (see Lemma 3 therein). It is analogous to the argument presented above, but instead of Lemma \ref{tail_est_log_conc} it uses the Gaussian concentration inequality.

\begin{lem}\label{gaussian_concentration}
Let $g_1, \ldots,g_n$ be independent standard Gaussian variables and let $T = \bigcup_{j=1}^m T_j \subset \R^n$. Then
\begin{displaymath}
\E\sup_{x\in T} \sum_{i=1}^n t_i g_i \le \max_{j\le m}\E\sup_{t\in T_j}\sum_{i=1}^n t_ig_i + L\sqrt{\log m}\sup_{s,t\in T} \Big(\sum_{i=1}^n (s_i-t_i)^2\Big)^{1/2}.
\end{displaymath}
\end{lem}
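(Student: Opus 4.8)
The plan is to mimic the structure of the proof of Lemma~\ref{union_of_sets}, replacing the tail bound of Lemma~\ref{tail_est_log_conc} with the Gaussian concentration inequality, which is what allows the constant in front of $\max_j\E\sup_{t\in T_j}\sum_i t_ig_i$ to be reduced to exactly one. First I would reduce to centered suprema: fixing an arbitrary $s\in T$ and using $\E g_i=0$, write
\begin{displaymath}
\E\sup_{t\in T}\sum_{i=1}^n t_ig_i = \E\max_{j\le m}\sup_{t\in T_j}\sum_{i=1}^n (t_i-s_i)g_i.
\end{displaymath}
For each fixed $j$ the random variable $Z_j=\sup_{t\in T_j}\sum_i(t_i-s_i)g_i$ is a Lipschitz function of $G_n$ with Lipschitz constant bounded by the diameter $D=\sup_{s,t\in T}(\sum_i(s_i-t_i)^2)^{1/2}$, so the Gaussian concentration inequality gives a subgaussian tail $\p(Z_j\ge \E Z_j + uD)\le e^{-u^2/2}$.

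The key point is that $\E\max_j Z_j$ should be controlled by $\max_j\E Z_j$ plus a term of order $\sqrt{\log m}\,D$ with \emph{constant one} in front of the maximum. To get this I would center each $Z_j$ at its own mean: writing $Z_j=\E Z_j+(Z_j-\E Z_j)$, we have
\begin{displaymath}
\E\max_{j\le m}Z_j \le \max_{j\le m}\E Z_j + \E\max_{j\le m}(Z_j-\E Z_j).
\end{displaymath}
Each $Z_j-\E Z_j$ is centered and $D$-subgaussian by concentration, and the expected maximum of $m$ (not necessarily independent) $D$-subgaussian centered variables is bounded by $LD\sqrt{\log m}$ via the standard union-bound/integration-by-parts estimate. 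This yields exactly the claimed inequality, since $\max_j\E Z_j$ appears with coefficient one.

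The main obstacle, and the reason the diameter rather than a one-sided quantity appears, is the verification that $Z_j$ is genuinely $D$-Lipschitz in $G_n$: for $G,G'\in\R^n$ one has $|Z_j(G)-Z_j(G')|\le \sup_{t\in T_j}|\sum_i(t_i-s_i)(g_i-g_i')|\le \sup_{t\in T_j}(\sum_i(t_i-s_i)^2)^{1/2}\,|G-G'|$, and $\sup_{t\in T_j}(\sum_i(t_i-s_i)^2)^{1/2}\le D$ since both $s$ and $t$ lie in $T$. The remaining work — the concentration tail, the union bound over $j$, and the integration by parts giving $\E\max_j(Z_j-\E Z_j)\le LD\sqrt{\log m}$ — is entirely standard and parallels the argument already carried out in Lemma~\ref{union_of_sets}; for a full write-up one simply invokes the reference \cite{L2} as the statement indicates.
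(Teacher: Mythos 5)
Your proposal is correct and follows exactly the route the paper indicates: it replays the centering argument of Lemma~\ref{union_of_sets} (fix $s\in T$, use $\E g_i=0$) but replaces the tail bound of Lemma~\ref{tail_est_log_conc} with the Gaussian concentration inequality for the $D$-Lipschitz functionals $Z_j$, which is precisely what yields the constant one in front of $\max_{j\le m}\E\sup_{t\in T_j}\sum_i t_ig_i$. The paper itself omits the details, deferring to Lemma~3 of \cite{L2}, and your write-up (including the reduction $\E\max_j Z_j\le \max_j\E Z_j+\E\max_j(Z_j-\E Z_j)$ and the union-bound/integration estimate $\E\max_j(Z_j-\E Z_j)\le LD\sqrt{\log m}$) is a faithful reconstruction of that standard argument.
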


\section{Suprema of some Gaussian processes \label{Section_Gaussian}}

The main result of this section is Proposition \ref{crucial} below, which is a strengthening of Theorem 3 of \cite{L3} in the special case $d=3$. Before stating the proposition we need some additional definitions.

For a triple indexed matrix $A = (a_{ijk})$ and a set $T \subset \R^n \times \R^n$, let us define
\[
\Delta_{A}(T)=
\sup\Big\{\Big(\sum_{k}\Big(\sum_{ij}a_{ijk}(x_iy_j-\tilde{x}_i\tilde{y}_j)\Big)^{2}
\Big)^{1/2}
\colon (x,y),(\tilde{x},\tilde{y})\in T\Big\}
\]
and
\[
s_2^T(A)=\sup_{(x,y)\in T}\Big[\Big(\sum_{jk}\Big(\sum_i a_{ijk}x_i\Big)^{2}\Big)^{1/2}
+\Big(\sum_{ik}\Big(\sum_j a_{ijk}y_j\Big)^{2}\Big)^{1/2}\Big].
\]

\begin{prop}
\label{crucial}
For any $p \ge2 $ and any set $T\subset (B_2^n + \sqrt{p}B_1^n)\times (B_2^n + \sqrt{p}B_1^n)$,
\[
\Ex\sup_{(x,y)\in T}\sum_{ijk}a_{ijk}x_iy_jg_k\leq
L\Big[\sqrt{p}\Delta_{A}(T)+s_2^T(A)+
\frac{1}{\sqrt{p}}\Big(\sum_{ijk}a_{ijk}^2\Big)^{1/2}
\Big].
\]
\end{prop}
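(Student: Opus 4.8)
The goal is to bound $\Ex\sup_{(x,y)\in T}\sum_{ijk}a_{ijk}x_iy_jg_k$, a Gaussian process indexed by $T$, by the three deterministic quantities on the right. The natural tool is a chaining/partitioning argument combined with the entropy estimates from Section~\ref{Entropy_subsection}. Observe that for fixed $(x,y)$ the inner sum $\sum_{ijk}a_{ijk}x_iy_jg_k$ is, conditionally on everything, a Gaussian with canonical distance governed by $\Delta_A$: indeed, the increment between indices $(x,y)$ and $(\tilde x,\tilde y)$ has variance $\sum_k(\sum_{ij}a_{ijk}(x_iy_j-\tilde x_i\tilde y_j))^2$, so $\Delta_A(T)$ is exactly the diameter of $T$ in the canonical Gaussian metric. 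Thus by the standard Gaussian concentration/entropy bound (Dudley or the Sudakov--Fernique toolbox, together with Lemma~\ref{gaussian_concentration}), the supremum is controlled by $\Delta_A(T)$ times a metric-entropy integral plus the value at a single reference point.

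\medskip

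The plan is to split the index set $T$ according to the decomposition $B_2^n+\sqrt p B_1^n$ of each factor, treating the $B_2^n$ part and the $\sqrt p B_1^n$ part separately, and to invoke the entropy bounds of Corollary~\ref{estentrgen} (with $d=2$, variables $x$ and $y$, and the Gaussian variable $g_k$ integrated out) to obtain a covering-number estimate for $T$ in the metric $\rho_\alpha$ where $\alpha$ is the relevant norm on the matrix $(a_{ijk})$. First I would identify the norm: for a fixed realization of $(g_k)$, the functional $(x,y)\mapsto \sum_{ijk}a_{ijk}x_iy_jg_k$ is bilinear, and its supremum is measured by the norm $\alpha(z)=\sum_k(\ldots)$-type expression whose Gaussian average $\Ex\alpha(G)$ reproduces $\Delta_A$ and whose lower-order contributions reproduce $s_2^T(A)$ and $p^{-1/2}(\sum a_{ijk}^2)^{1/2}$. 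The term $s_2^T(A)$ should appear as the first-order ($k=1$) contribution $W_I^T$ in the expansion $W_2^T(\alpha,t)$, while $\Delta_A$ is the second-order ($k=2$) contribution, and the Frobenius term $(\sum a_{ijk}^2)^{1/2}$ arises from the constant ``center'' contribution when $t=1/\sqrt p$.

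\medskip

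Concretely I would run the following steps in order. (1) Write the process as a double-indexed empirical-type process in $(x,y)$ with Gaussian coefficients and set $t=1/\sqrt p$ so that the scale matches $T\subset(B_2^n+\sqrt pB_1^n)^2$; after rescaling by $\sqrt p$ this places $T$ inside the domain $(B_2^n+aB_1^n)^2$ with $a\sim1$ to which Corollary~\ref{estentrgen} applies. (2) Apply the entropy estimate to get $N(T,\rho_\alpha,U_2^T(\alpha,s,t))\le\exp(Ls^{-2}+Lat^{-1})$, and feed this into a Dudley-type chaining bound for the Gaussian process, using Lemma~\ref{gaussian_concentration} to handle the union over the pieces of a partition with the optimal $\sqrt{\log m}$ constant. (3) Collect the chaining sum into the three advertised terms by matching each order $k\in\{0,1,2\}$ of the $U$- or $W$-expansion to the Frobenius, $s_2^T$, and $\Delta_A$ quantities respectively, absorbing the numerical constants into $L$.

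\medskip

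The main obstacle I anticipate is not the Gaussian concentration itself but the bookkeeping that translates the abstract entropy bound $U_2^T(\alpha,s,t)$ into the three explicit geometric quantities, together with the need to balance the two scales $s$ (Euclidean/Gaussian) and $t$ (the $\ell_1$/exponential scale tied to $\sqrt p B_1^n$) so that the chaining integral converges and yields exactly $\sqrt p\,\Delta_A$, $s_2^T$, and $p^{-1/2}(\sum a_{ijk}^2)^{1/2}$ with no spurious logarithmic factors. A delicate point is that a naive Dudley integral would produce a $\sqrt{\log}$ loss; avoiding it requires the sharper partition-based argument (as in \cite{L2,L3}) rather than a crude entropy integral, and verifying that the contraction-principle and symmetrization reductions correctly split the $B_2^n$ and $\sqrt pB_1^n$ contributions without cross terms is where the care is needed.
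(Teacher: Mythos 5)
You have correctly identified the skeleton of the argument --- the entropy estimates of Section~\ref{Entropy_subsection} in metrics generated by $\alpha_A$, a partition scheme in place of a crude entropy integral, and the heuristic matching of $\Delta_A(T)$, $s_2^T(A)$ and the Frobenius norm to different levels of the decomposition --- but your proposal stops exactly where the real proof begins, and the step you defer is the theorem itself. You concede that a naive Dudley integral loses a $\sqrt{\log}$ factor and propose to avoid it by ``the sharper partition-based argument (as in \cite{L2,L3})''; however, Proposition~\ref{crucial} is precisely a strengthening of Theorem~3 of \cite{L3} in the case $d=3$, so pointing to those references is circular. What is missing is the concrete mechanism the paper builds: (i) simultaneous control along the partition tree of \emph{two} metrics, $\rho_A$ (the canonical Gaussian metric, entropy via Corollary~\ref{first_entropy_cor}) and $\tilde{\rho}_A$ (which controls $s_2^{S}(A)$ of the pieces, via Corollary~\ref{second_entropy_cor}) --- your outline tracks only the Gaussian metric; (ii) a recentering step: after partitioning one must translate each piece, comparing $F_A^G((x,y)+T_l)$ with $F_A^G(T_l)$, and the translation cost is the supremum of auxiliary \emph{bilinear} Gaussian processes built from the matrices $b^1_{ik}=\sum_j a_{ijk}y_j$ and $b^2_{jk}=\sum_i a_{ijk}x_i$, which requires the separate Lemmas~\ref{supremum_l1} and~\ref{supremum_d2} (themselves resting on the exponential entropy bound of Corollary~\ref{entropy_B_1^n}); your plan never addresses how shifted pieces are handled, yet this is where the $s_2$-type terms actually enter; and (iii) the finite-cardinality induction on the quantities $c_T(r,l)$, in which only the first step $l=0$ contributes $\sqrt{p}\,\Delta_A(T)+s_2^T(A)$, while every later level contributes geometrically decaying multiples of $p^{-1/2}\|A\|_{\{1,2,3\}}$ whose sum yields the Frobenius term. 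Without (i)--(iii) the plan does not produce the stated bound, with or without logarithms.

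A secondary but genuine error: you claim that ``after rescaling by $\sqrt{p}$'' the set $T$ lies in $(B_2^n+aB_1^n)^2$ with $a\sim 1$. No homothety normalizes the two radii simultaneously, since $\lambda(B_2^n+\sqrt{p}B_1^n)=\lambda B_2^n+\lambda\sqrt{p}\,B_1^n$; this incompatibility of scales is exactly why the paper works with the two-parameter measure $\mu_{n,s,t}$ and applies Corollary~\ref{estentrgen1} directly with $a=\sqrt{p}$ and $t\sim 2^{-l}p^{-1/2}$, yielding $\exp(L2^{2l}p)$ pieces at level $l$ (Lemmas~\ref{first_decomposition_lemma} and~\ref{second_decomposition_lemma}). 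Relatedly, your matching of the three terms to ``orders $k=0,1,2$ of the $U$-expansion'' is not how they arise: $\Delta_A(T)$ and $s_2^T(A)$ are diameters of $T$ in the two metrics and appear only through the initial partition step, whereas the entropy functional itself contributes $t^2\|A\|_{\{1,2,3\}}+t\,s_2^T(A)$ as in Corollary~\ref{first_entropy_cor}. The intended architecture is right in spirit, but as written the proposal is a plan for the easy bookkeeping around a core argument it does not supply.
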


Before we pass to the proof of Proposition \ref{crucial} we will prove its counterpart for double-indexed matrices. This simpler result will be used
in the proof of Proposition \ref{crucial}.

\begin{lem}\label{supremum_l1}
For any matrix $B = (b_{ij})_{i,j\le n}$, $a \ge 1$ and $T \subset a B_1^n$,
\begin{align*}
\E\sup_{x \in T} \sum_{ij =1}^n b_{ij}x_i g_j &\le La^{1/2}\|B\|_{\{1,2\}}^{1/2} (\|B\|_{\{1,2\}}\wedge \Delta_B(T))^{1/2} + La^{1/2}\Delta_B(T),\\
&\le L\Big(\|B\|_{\{1,2\}} + a\Delta_B(T)\Big).
\end{align*}
where $\Delta_B(T) = \sup_{x,x' \in T}\Big(\sum_{j=1}^n \Big(\sum_{i=1}^n b_{ij} (x_i-x_i')\Big)^2\Big)^{1/2}$.
\end{lem}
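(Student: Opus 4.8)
The plan is to read the quantity as the supremum of a centered Gaussian process and to bound it by Dudley's entropy integral, feeding in the dimension-free covering estimate of Corollary \ref{entropy_B_1^n}. Write $Z_x=\sum_{ij}b_{ij}x_ig_j$ for $x\in T$. Since the $g_j$ are independent standard Gaussians, $Z_x$ is centered Gaussian with $\E(Z_x-Z_{x'})^2=\alpha(x-x')^2$, where $\alpha$ is the seminorm on $\R^n$ given by $\alpha(z)=\big(\sum_j(\sum_i b_{ij}z_i)^2\big)^{1/2}$. Thus the canonical metric of the process is exactly $\rho_\alpha$, and $\Delta_B(T)=\sup_{x,x'\in T}\alpha(x-x')$ is its $\rho_\alpha$-diameter. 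Since $\E Z_x=0$, Dudley's inequality yields
\[
\E\sup_{x\in T}Z_x\le L\int_0^{\Delta_B(T)}\sqrt{\log N(T,\rho_\alpha,\epsilon)}\,d\epsilon,
\]
the integrand vanishing above the diameter; if one wants to stay strictly within the paper's toolbox, the same chaining bound can be produced from Lemma \ref{gaussian_concentration}. (If $\alpha$ is only a seminorm one perturbs it to $\alpha+\eta\|\cdot\|_2$ and lets $\eta\to0$, the measure estimates behind Corollary \ref{entropy_B_1^n} being insensitive to this.)

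The key input is Corollary \ref{entropy_B_1^n}: as $T\subset aB_1^n$, for every $t\in(0,1]$ we have $N(T,\rho_\alpha,t\E\alpha(\calE))\le2\exp(8at^{-1})$. Writing $\beta:=\E\alpha(\calE)$ and substituting $\epsilon=t\beta$, this reads $\log N(T,\rho_\alpha,\epsilon)\le\log2+8a\beta/\epsilon$ for $0<\epsilon\le\beta$, while for $\epsilon\ge\beta$ the monotonicity of covering numbers together with the choice $t=1$ gives $\log N(T,\rho_\alpha,\epsilon)\le\log2+8a$. Finally I bound $\beta$ by $\|B\|_{\{1,2\}}$: by Jensen's inequality and $\E\xi_i^2=2$,
\[
\beta\le\big(\E\alpha(\calE)^2\big)^{1/2}=\Big(2\sum_{ij}b_{ij}^2\Big)^{1/2}=\sqrt2\,\|B\|_{\{1,2\}},
\]
recalling that for the one-block partition $\|B\|_{\{1,2\}}$ is precisely the Hilbert--Schmidt norm of $B$.

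It remains to insert these estimates into the entropy integral and to split it at the scale $m:=\min(\beta,\Delta_B(T))$. On $(0,m]$ the $1/\epsilon$ term dominates and $\int_0^m\sqrt{8a\beta/\epsilon}\,d\epsilon\le L\sqrt{a\beta m}$; since $\beta\le\sqrt2\,\|B\|_{\{1,2\}}$ and $m\le\Delta_B(T)$ one has $\beta m\le 2\|B\|_{\{1,2\}}\,(\|B\|_{\{1,2\}}\wedge\Delta_B(T))$, so this piece is at most $La^{1/2}\|B\|_{\{1,2\}}^{1/2}(\|B\|_{\{1,2\}}\wedge\Delta_B(T))^{1/2}$. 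The leftover ranges contribute only the flat integrand $\sqrt{\log2+8a}\le L\sqrt a$ over an interval of length at most $\Delta_B(T)$, giving $La^{1/2}\Delta_B(T)$. Summing the two contributions proves the first (sharper) inequality; the second then follows at once by AM--GM, since $a^{1/2}\|B\|_{\{1,2\}}^{1/2}(\|B\|_{\{1,2\}}\wedge\Delta_B(T))^{1/2}\le\frac12(\|B\|_{\{1,2\}}+a\Delta_B(T))$ and $a^{1/2}\Delta_B(T)\le a\Delta_B(T)$ because $a\ge1$.

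The only genuinely delicate point is the choice of the splitting scale, and this is where I expect the main obstacle to lie. Integrating the $\sqrt{a\beta/\epsilon}$ bound all the way up to $\Delta_B(T)$ would produce $a^{1/2}\|B\|_{\{1,2\}}^{1/2}\Delta_B(T)^{1/2}$, which is too weak precisely when $\Delta_B(T)>\|B\|_{\{1,2\}}$; cutting the integral at $m=\min(\beta,\Delta_B(T))$ and surrendering the tail to the flat $\sqrt a$ estimate is exactly what upgrades $\Delta_B(T)^{1/2}$ to $(\|B\|_{\{1,2\}}\wedge\Delta_B(T))^{1/2}$ and delivers the first bound. Everything else reduces to evaluating elementary one-variable integrals.
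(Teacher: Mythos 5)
Your proof is correct and follows essentially the same route as the paper's: both read $\sum_{ij}b_{ij}x_ig_j$ as a Gaussian process with canonical metric $\rho_\alpha$, feed the covering bound of Corollary \ref{entropy_B_1^n} (via $\E\alpha(\calE)\le\sqrt2\,\|B\|_{\{1,2\}}$) into Dudley's entropy integral, split the integral at the scale $\|B\|_{\{1,2\}}\wedge\Delta_B(T)$, and deduce the second inequality by an AM--GM step using $a\ge1$. The only additions on your side are cosmetic --- the explicit flat bound $\log N\le\log 2+8a$ above the splitting scale and the seminorm-perturbation remark --- neither of which changes the argument.
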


\begin{proof}
Let us consider the process $Z_x = \sum_{i=1}^n b_{ij} x_i g_j$ and the associated metric
\begin{displaymath}
d_Z(x,x') = \|Z_x - Z_{x'}\|_2 = \Big(\sum_{j=1}^n \Big(\sum_{i=1}^n b_{ij} (x_i-x_i')\Big)^2\Big)^{1/2}.
\end{displaymath}

We have $\Delta_B(T) = {\rm diam}_{d_Z} T$.
Since $\E (\sum_{i=1}^n (\sum_{j=1}^n b_{ij} \xi_j)^2)^{1/2} \le \sqrt{2}\|B\|_{\{1,2\}}$,
by Corollary \ref{entropy_B_1^n}, we have for $t \in (0,1]$,
\begin{displaymath}
N(T,d_Z,t\|B\|_{\{1,2\}}) \le \exp(Lat^{-1}),
\end{displaymath}
so for $\varepsilon \le \|B\|_{\{1,2\}}$,
\begin{displaymath}
N(T,d_Z,\varepsilon) \le \exp(L\|B\|_{\{1,2\}}a\varepsilon^{-1}).
\end{displaymath}
By Dudley's bound (see \cite{D} or e.g. Corollary 5.1.6 in \cite{dlPG}) we have
\begin{align*}
\E\sup_{x\in T} Z_x \le& L\int_0^{\Delta_B(T)} \sqrt{\log N(T,d_Z,\varepsilon)}d\varepsilon\\
\le& L\int_0^{\|B\|_{\{1,2\}}\wedge \Delta_B(T)}a^{1/2}\|B\|_{\{1,2\}}^{1/2}\varepsilon^{-1/2}d\varepsilon \\
&+ L\int_{\|B\|_{\{1,2\}}\wedge \Delta_B(T)}^{\Delta_B(T)}a^{1/2}d\varepsilon\\
= & La^{1/2}\|B\|_{\{1,2\}}^{1/2} (\|B\|_{\{1,2\}}\wedge \Delta_B(T))^{1/2} + La^{1/2}\Delta_B(T).
\end{align*}
The second estimate of the lemma follows from the inequality $2\sqrt{xy} \le a^{-1/2}x+a^{1/2}y$.

\end{proof}

\begin{lem}\label{supremum_d2}
For any matrix $B = (b_{ij})_{i,j\le n}$, any $T \subset B_2^n +\sqrt{p}B_1^n$ and $p\ge 1$,
\begin{displaymath}
\E\sup_{x \in T} \sum_{i,j=1}^n b_{ij} x_i g_j \le L\Big( \|B\|_{\{1,2\}} + \sqrt{p}\Delta_B(T)\Big),
\end{displaymath}
where $\Delta_B$ is as in Lemma \ref{supremum_l1}.
\end{lem}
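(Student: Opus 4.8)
The plan is to reduce the statement over $T \subset B_2^n + \sqrt{p}B_1^n$ to two pieces by decomposing each point of $T$ into a contribution from the Euclidean ball $B_2^n$ and a contribution from the scaled $\ell_1$ ball $\sqrt{p}B_1^n$, then bound the two resulting Gaussian suprema separately — the first by a direct second-moment (Cauchy--Schwarz) argument and the second by Lemma \ref{supremum_l1}. Concretely, every $x \in T$ can be written as $x = u + v$ with $u \in B_2^n$ and $v \in \sqrt{p}B_1^n$. Fixing a basepoint $x^0 = u^0 + v^0 \in T$ and using $\E g_j = 0$, I would first center the process and split
\begin{displaymath}
\E\sup_{x\in T}\sum_{ij}b_{ij}x_ig_j = \E\sup_{x\in T}\sum_{ij}b_{ij}(x_i - x_i^0)g_j \le \E\sup_{u}\sum_{ij}b_{ij}(u_i-u_i^0)g_j + \E\sup_{v}\sum_{ij}b_{ij}(v_i-v_i^0)g_j,
\end{displaymath}
where the suprema run over the corresponding projections of $T$ into $2B_2^n$ and $2\sqrt{p}B_1^n$ respectively.

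For the $B_2^n$-part, the index set sits in a bounded Euclidean ball, so I would simply estimate the expected supremum by its diameter in the canonical metric times a constant, or more crudely bound it by $L\,\E(\sum_i(\sum_j b_{ij}g_j)^2)^{1/2} \le L\|B\|_{\{1,2\}}$ using Jensen and the definition of $\|B\|_{\{1,2\}}$ as the operator norm / Hilbert--Schmidt-type quantity appearing in Lemma \ref{supremum_l1}. For the $\ell_1$-part, I would apply Lemma \ref{supremum_l1} directly with $a = 2\sqrt{p}$ (note $\sqrt{p}\ge 1$ since $p\ge 1$, so the hypothesis $a\ge 1$ holds), obtaining a bound of the form $L(\|B\|_{\{1,2\}} + \sqrt{p}\,\Delta_B(T'))$ where $T'$ is the $\ell_1$-projection. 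The key point is that the diameter $\Delta_B$ controls these projected diameters: shifting by the fixed vectors $u^0,v^0$ only changes the relevant $\ell_1$-diameter up to the full diameter $\Delta_B(T)$ of the original set, possibly absorbing an extra additive Euclidean term into $\|B\|_{\{1,2\}}$.

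The main obstacle I anticipate is bookkeeping the relationship between $\Delta_B$ of the \emph{projected} sets and $\Delta_B(T)$ of the original set $T$, since the Minkowski decomposition $x = u+v$ is not canonical and the projections need not lie in $T$. The cleanest route is probably to avoid decomposing the set and instead run the entropy/Dudley argument directly on $T$ itself, using Corollary \ref{estentrgen1} (or the mixed estimate of Corollary \ref{estentrgen} with the Gaussian scale) to bound $N(T, d_Z, \varepsilon)$ for $T \subset B_2^n + \sqrt{p}B_1^n$: the two-term entropy bound $\exp(L t^{-2} + L\sqrt{p}\,t^{-1})$ produces exactly two regimes in Dudley's integral, a $t^{-1}$-type regime giving the $\|B\|_{\{1,2\}}$ term and a $\sqrt{p}\,t^{-1/2}$-type regime giving the $\sqrt{p}\,\Delta_B(T)$ term, mirroring the structure of the proof of Lemma \ref{supremum_l1}. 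I would verify that $\E(\sum_i(\sum_j b_{ij}\xi_j)^2)^{1/2} \lesssim \|B\|_{\{1,2\}}$ and $\E(\sum_i(\sum_j b_{ij}g_j)^2)^{1/2} \lesssim \|B\|_{\{1,2\}}$ so that the covering-number scale is governed by $\|B\|_{\{1,2\}}$ in both the Gaussian and exponential directions, and then collect the two Dudley contributions. The final step is the elementary algebra combining the square-root terms via $2\sqrt{xy}\le x+y$, exactly as at the end of Lemma \ref{supremum_l1}.
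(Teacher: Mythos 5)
Both routes you propose break down, each exactly at the point you flag. In the Minkowski-splitting route the issue is not bookkeeping but a genuine loss of a factor $\sqrt{p}$: because the decomposition $x=u+v$ is non-canonical, the $B_2^n$-parts can compensate the $\ell_1$-parts, so already for $T$ of $d_Z$-diameter zero the $\ell_1$-projection $T'$ can have $\Delta_B(T')$ of order $\|B\|_{\{1\}\{2\}}$; in general one only gets $\Delta_B(T')\le \Delta_B(T)+2\|B\|_{\{1\}\{2\}}$. Feeding this into Lemma \ref{supremum_l1} with $a=2\sqrt{p}$ multiplies the slack by $a$ (or $a^{1/2}$ in the refined form), leaving a term of order $\sqrt{p}\,\|B\|_{\{1\}\{2\}}$ (at best $p^{1/4}\|B\|_{\{1,2\}}$), which is \emph{not} dominated by $L(\|B\|_{\{1,2\}}+\sqrt{p}\,\Delta_B(T))$; so the ``extra additive Euclidean term'' cannot be absorbed into $\|B\|_{\{1,2\}}$, since it enters multiplied by a power of $p$. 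Your Dudley fallback fails for a different reason: Corollary \ref{estentrgen1} gives $\log N(T,d_Z,t\|B\|_{\{1,2\}})\le Lt^{-2}+L\sqrt{p}\,t^{-1}$, and the Gaussian $t^{-2}$ part makes the entropy integral diverge, since then $\sqrt{\log N(T,d_Z,\varepsilon)}\ge c\|B\|_{\{1,2\}}/\varepsilon$ for small $\varepsilon$ and $\int_0\varepsilon^{-1}d\varepsilon=\infty$. There is no ``$t^{-1}$-type regime giving the $\|B\|_{\{1,2\}}$ term'': plain Dudley can never recover the bound $\E\big(\sum_i(\sum_j b_{ij}g_j)^2\big)^{1/2}\le\|B\|_{\{1,2\}}$ from dual-Sudakov-type entropy (the classical logarithmic gap for Gaussian quadratic forms). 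Only the pure $\ell_1$ entropy $\exp(Lat^{-1})$ of Corollary \ref{entropy_B_1^n} is integrable, which is precisely why Lemma \ref{supremum_l1} is confined to $T\subset aB_1^n$.

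The paper's proof is a one-scale chaining that reverses your order of operations. First it chops only the $\ell_1$ ball: by Corollary \ref{entropy_B_1^n} with $a=\sqrt{p}$ and $t=1/(\sqrt{2}p)$, using $\E\big(\sum_i(\sum_j b_{ij}\xi_j)^2\big)^{1/2}\le\sqrt{2}\|B\|_{\{1,2\}}$, one writes $\sqrt{p}B_1^n=\bigcup_{i\le N}K_i$ with $N\le\exp(Lp)$ and $\Delta_B(K_i)\le p^{-1/2}\|B\|_{\{1,2\}}$. Then $T=\bigcup_{i\le N}\big(T\cap(B_2^n+K_i)\big)$, and Lemma \ref{gaussian_concentration} bounds the expected maximum over the $N$ pieces by the worst piece plus $L\sqrt{\log N}\,\Delta_B(T)\le L\sqrt{p}\,\Delta_B(T)$ --- crucially the diameter of $T$ itself, not of the pieces or of the ball, which is the only place $\Delta_B(T)$ enters. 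Each piece lies in the Minkowski sum $B_2^n+K_i$, so its supremum splits into the exact computation $\E\sup_{x\in B_2^n}\sum_{ij}b_{ij}x_ig_j=\E\big(\sum_i(\sum_jb_{ij}g_j)^2\big)^{1/2}\le\|B\|_{\{1,2\}}$ (no entropy used) plus Lemma \ref{supremum_l1} applied to $K_i$, where the pre-shrinking gives $\sqrt{p}\,\Delta_B(K_i)\le\|B\|_{\{1,2\}}$, so no $\sqrt{p}\,\|B\|_{\{1,2\}}$ term survives. Your instinct to split $B_2^n+\sqrt{p}B_1^n$ and to invoke Lemma \ref{supremum_l1} is correct, but it must be applied only after the $\ell_1$ ball has been reduced to pieces of $d_Z$-diameter $p^{-1/2}\|B\|_{\{1,2\}}$, with the union over the $\exp(Lp)$ pieces paid for by Gaussian concentration at the level of $T$ rather than by an entropy integral.
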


\begin{proof}
Since $\E (\sum_{i=1}^n (\sum_{j=1}^n b_{ij} \xi_j)^2)^{1/2} \le \sqrt{2}\|B\|_{\{1,2\}}$,
by Corollary \ref{entropy_B_1^n} (with $a = \sqrt{p}$ and $t = 1/(\sqrt{2}p)$) there exist sets $K_i \subset \sqrt{p}B_1^n$, $i=1,\ldots,N \le \exp(Lp)$, such that
\begin{displaymath}
\sqrt{p}B_1^n = \bigcup_{i=1}^N K_i
\end{displaymath}
and
\begin{align}\label{supremum_d2_1}
\Delta_B(K_i) \le p^{-1/2}\|B\|_{\{1,2\}}.
\end{align}
By Lemma \ref{gaussian_concentration} we have

\begin{align}\label{supremum_d2_2}
\E\sup_{x\in T} \sum_{ij}b_{ij}x_ig_j &= \E\max_{i\le N}\sup_{x \in T\cap(B_2^n + K_i)} \sum_{ij}b_{ij}x_i g_j\nonumber\\
&\le \max_{i\le N}\E\sup_{x \in T\cap(B_2^n + K_i)} \sum_{ij}b_{ij}x_i g_j + L\sqrt{\log N}\Delta_B(T) \nonumber\\
&\le \max_{i\le N} \Big(\E\sup_{x \in B_2^n} \sum_{ij}b_{ij}x_ig_j + \E\sup_{x \in K_i}\sum_{ij}b_{ij}x_ig_j\Big) + L\sqrt{p}\Delta_B(T)\nonumber\\
&\le \|B\|_{\{1,2\}} + L(\|B\|_{\{1,2\}} + \sqrt{p} \Delta_B(K_i)) + L\sqrt{p}\Delta_B(T),
\end{align}
where in the last inequality we used Lemma \ref{supremum_l1} and the fact that
\begin{displaymath}
\E\sup_{x\in B_2^n}\sum_{ij}b_{ij}x_i g_j = \E \sqrt{\sum_{i}(\sum_j b_{ij} g_j)^2} \le \|B\|_{\{1,2\}}.
\end{displaymath}
Inequalities (\ref{supremum_d2_1}) and (\ref{supremum_d2_2}) imply the lemma.
\end{proof}

For a triple indexed matrix $A = (a_{ijk})_{i,j,k}$, let $\alpha_A$ be a norm on $\R^{n^2}$, given by
\begin{displaymath}
\alpha_A(z) = \Big(\sum_k\Big(\sum_{i,j}a_{ijk}z_{ij}\Big)^2\Big)^{1/2}.
\end{displaymath}
To simplify the notation we will write $\rho_A$ for $\rho_{\alpha_A}$.
Note that
\begin{displaymath}
\rho_A((x_1,y_1),(x_2,y_2)) = (\Ex(X_{(x_1,y_1)} - X_{(x_2,y_2)})^2)^{1/2},
\end{displaymath}
where
\begin{displaymath}
X_{(x,y)} = \sum_{ijk}a_{ijk}x_iy_jg_k.
\end{displaymath}

We will also need a norm on $\R^n\times \R^n$ defined by
\begin{displaymath}
\tilde{\alpha}_A((x,y)) =
\Big(\sum_{jk}\Big(\sum_i a_{ijk}x_i\Big)^2\Big)^{1/2}
+ \Big(\sum_{ik}\Big(\sum_{j}a_{ijk}y_j\Big)^2\Big)^{1/2}.
\end{displaymath}
The corresponding distance on $\R^n\times \R^n$ will be denoted by $\tilde{\rho}_A$.

We will use the following consequences of Corollary \ref{estentrgen1}.

\begin{cor}\label{first_entropy_cor}
For  any $p \ge 1$, any set $T\subset (B_2^n+\sqrt{p}B_1^n)\times (B_2^n+\sqrt{p}B_1^n)$ and any $t \in (0,1]$,
\begin{displaymath}
N(T,\rho_A,t^2\|A\|_{\{1,2,3\}} + ts_2^T(A)) \le \exp(Lt^{-2} +L\sqrt{p}t^{-1}).
\end{displaymath}
\end{cor}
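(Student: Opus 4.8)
The plan is to read this off from Corollary~\ref{estentrgen1} applied with $d=2$ to the norm $\alpha_A$ on $\R^{n^2}=\R^n\otimes\R^n$ and with $a=\sqrt p$, which is legitimate because by hypothesis $T\subset(B_2^n+\sqrt p B_1^n)\times(B_2^n+\sqrt p B_1^n)=(B_2^n+\sqrt p B_1^n)^2$. That corollary gives, for every $u\in(0,1]$,
\[
N(T,\rho_A,V_2^T(\alpha_A,u))\le \exp(Lu^{-2}+L\sqrt p\,u^{-1}),
\]
so the whole task reduces to showing that the announced radius $t^2\|A\|_{\{1,2,3\}}+t\,s_2^T(A)$ dominates $V_2^T(\alpha_A,u)$ for a suitable $u$ comparable to $t$; monotonicity of the covering number in the radius then finishes the argument.

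The core computation is to estimate the three building blocks of
\[
V_2^T(\alpha_A,u)=u\bigl(V_{\{1\}}^T(\alpha_A)+V_{\{2\}}^T(\alpha_A)\bigr)+u^2V_{\{1,2\}}^T(\alpha_A).
\]
For the singleton $I=\{1\}$ the relevant vector is $(\xi_i y_j)_{ij}$; writing $b_{ik}=\sum_j a_{ijk}y_j$ one has $\alpha_A((\xi_i y_j)_{ij})=(\sum_k(\sum_i\xi_i b_{ik})^2)^{1/2}$, so by Jensen's inequality together with independence, mean zero, and $\E\xi_i^2=2$,
\[
\E\alpha_A\bigl((\xi_i y_j)_{ij}\bigr)\le\Bigl(2\sum_{ik}\Bigl(\sum_j a_{ijk}y_j\Bigr)^2\Bigr)^{1/2},
\]
which after taking the supremum over $(x,y)\in T$ is at most $\sqrt2\,s_2^T(A)$, since the right-hand quantity is one of the two summands defining $s_2^T(A)$. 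The index $I=\{2\}$ is symmetric and also bounded by $\sqrt 2\,s_2^T(A)$. For $I=\{1,2\}$ the vector is $(\xi_i^1\xi_j^2)_{ij}$, independent of $(x,y)$, and the same Jensen step together with the orthogonality relations $\E\xi_i^1\xi_{i'}^1=2\delta_{ii'}$ and $\E\xi_j^2\xi_{j'}^2=2\delta_{jj'}$ gives
\[
V_{\{1,2\}}^T(\alpha_A)\le\Bigl(4\sum_{ijk}a_{ijk}^2\Bigr)^{1/2}=2\|A\|_{\{1,2,3\}},
\]
where I use that $\|A\|_{\{1,2,3\}}$ is exactly the Hilbert--Schmidt norm $(\sum_{ijk}a_{ijk}^2)^{1/2}$.

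Combining these bounds yields $V_2^T(\alpha_A,u)\le 2\sqrt2\,u\,s_2^T(A)+2u^2\|A\|_{\{1,2,3\}}$. Choosing $u=t/(2\sqrt2)$, which lies in $(0,1]$ whenever $t\in(0,1]$, makes $2\sqrt2\,u\,s_2^T(A)=t\,s_2^T(A)$ and $2u^2\|A\|_{\{1,2,3\}}=\tfrac14 t^2\|A\|_{\{1,2,3\}}\le t^2\|A\|_{\{1,2,3\}}$, hence $V_2^T(\alpha_A,u)\le t^2\|A\|_{\{1,2,3\}}+t\,s_2^T(A)$. Monotonicity of $N(T,\rho_A,\cdot)$ then gives
\[
N\bigl(T,\rho_A,t^2\|A\|_{\{1,2,3\}}+t\,s_2^T(A)\bigr)\le N\bigl(T,\rho_A,V_2^T(\alpha_A,u)\bigr)\le\exp(Lu^{-2}+L\sqrt p\,u^{-1}),
\]
and since $u^{-2}=8t^{-2}$ and $u^{-1}=2\sqrt2\,t^{-1}$ the right-hand side is $\exp(Lt^{-2}+L\sqrt p\,t^{-1})$, as claimed. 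There is no serious obstacle here: the only points demanding care are recognizing the two singleton blocks as the summands defining $s_2^T(A)$ and the top block as $\|A\|_{\{1,2,3\}}$, and the bookkeeping with universal constants when rescaling $u\leftrightarrow t$ to clear the factor in front of the radius.
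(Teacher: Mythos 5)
Your proof is correct and follows essentially the same route as the paper: the paper's proof also bounds $V_{\{1,2\}}^T(\alpha_A)\le 2\|A\|_{\{1,2,3\}}$ by Jensen's inequality and $V_{\{1\}}^T(\alpha_A)+V_{\{2\}}^T(\alpha_A)\le 2\sqrt{2}\,s_2^T(A)$, then invokes Corollary~\ref{estentrgen1} with $d=2$, absorbing the resulting universal constants into $L$. Your version is, if anything, slightly more careful than the paper's, making the rescaling $u=t/(2\sqrt{2})$ explicit and correctly pairing the bound for $V_{\{1\}}^T$ with the summand of $s_2^T(A)$ involving $y$ (the paper's displayed intermediate bound swaps the two summands, harmlessly, since only their sum is used).
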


\begin{proof}
It is enough to notice that
\begin{displaymath}
V_{\{1,2\}}^T(\alpha_A) = \Ex(\sum_{k}(\sum_{ij}a_{ijk}\xi_{i}^{1}\xi_j^{2})^2)^{1/2} \le
(\Ex\sum_{k}(\sum_{ij}a_{ijk}\xi_{i}^{1}\xi_j^{2})^2)^{1/2} =2\|A\|_{\{1,2,3\}}
\end{displaymath}
whereas
\begin{align*}
V_{\{1\}}^T(\alpha_A) & + V_{\{2\}}^T(\alpha_A) \\
&= \sup_{(x,y)\in T}\Ex(\sum_{k}(\sum_{ij}a_{ijk} \xi_i y_j)^2)^{1/2} +
\sup_{(x,y)\in T}\Ex(\sum_{k}(\sum_{ij}a_{ijk}x_i \xi_j)^2)^{1/2} \\
&\le \sqrt{2}\sup_{(x,y)\in T} (\sum_{jk}(\sum_i a_{ijk}x_i)^2)^{1/2} + \sqrt{2}\sup_{(x,y) \in T} (\sum_{ik}(\sum_{j}a_{ijk}y_j)^2)^{1/2}\\
&\le 2\sqrt{2}s_2^T(A).
\end{align*}
The statement of the corollary follows now from Corollary \ref{estentrgen1} applied with $d=2$.
\end{proof}

\begin{cor}
\label{second_entropy_cor}
For any $p \ge 1$, any set $T\subset (B_2^n+\sqrt{p}B_1^n)\times (B_2^n+\sqrt{p}B_1^n)$ and any $t \in (0,1]$,
\begin{displaymath}
N(T,\tilde{\rho}_A,t\|A\|_{\{1,2,3\}}) \le \exp(Lt^{-2} +L\sqrt{p}t^{-1}).
\end{displaymath}
\end{cor}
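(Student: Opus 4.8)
The plan is to exploit the structural difference between $\tilde\alpha_A$ and the tensorial norm $\alpha_A$: the norm $\tilde\alpha_A$ involves no tensor product and splits as $\tilde\alpha_A((x,y)) = \beta_1(x)+\beta_2(y)$, where
\begin{displaymath}
\beta_1(x) = \Big(\sum_{jk}\Big(\sum_i a_{ijk}x_i\Big)^2\Big)^{1/2}, \qquad \beta_2(y) = \Big(\sum_{ik}\Big(\sum_j a_{ijk}y_j\Big)^2\Big)^{1/2}
\end{displaymath}
are norms on $\R^n$ acting on the two coordinates separately. Hence $\tilde\rho_A((x,y),(\tilde x,\tilde y)) = \beta_1(x-\tilde x)+\beta_2(y-\tilde y)$ is a sum of two metrics living on disjoint variables, and the covering problem decouples into two genuinely one-dimensional ones.

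First I would reduce to these one-dimensional estimates. Writing $\pi_1(T),\pi_2(T)\subset B_2^n+\sqrt p B_1^n$ for the projections of $T$ onto the two factors, the product of an $\varepsilon$-net of $(\pi_1(T),\beta_1)$ with an $\varepsilon$-net of $(\pi_2(T),\beta_2)$ is a $2\varepsilon$-net of $(T,\tilde\rho_A)$, so that
\begin{displaymath}
N(T,\tilde\rho_A,2\varepsilon) \le N(\pi_1(T),\beta_1,\varepsilon)\,N(\pi_2(T),\beta_2,\varepsilon).
\end{displaymath}
To bound each factor by the $d=1$ case of Corollary \ref{estentrgen1}, I need to control the mean widths $\E\beta_i(\calE_n)$. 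By Jensen's inequality and $\E\xi_i^2=2$,
\begin{displaymath}
\E\beta_1(\calE_n) \le \Big(\E\sum_{jk}\Big(\sum_i a_{ijk}\xi_i\Big)^2\Big)^{1/2} = \Big(2\sum_{ijk}a_{ijk}^2\Big)^{1/2} = \sqrt2\,\|A\|_{\{1,2,3\}},
\end{displaymath}
and identically for $\beta_2$. Applying Corollary \ref{estentrgen1} with $d=1$ and $a=\sqrt p$ (noting that for $d=1$ one has $V_1^{\pi_i(T)}(\beta_i,t)=t\E\beta_i(\calE_n)$), then using monotonicity of covering numbers in the radius together with the bound above, I obtain for $t\in(0,1]$ and $i=1,2$
\begin{displaymath}
N(\pi_i(T),\beta_i,\sqrt2\,t\|A\|_{\{1,2,3\}}) \le N(\pi_i(T),\beta_i,t\E\beta_i(\calE_n)) \le \exp(Lt^{-2}+L\sqrt p\,t^{-1}).
\end{displaymath}
Multiplying the two estimates and absorbing the numerical factor $2\sqrt2$ in front of $t\|A\|_{\{1,2,3\}}$ into the universal constants $L$ (by rescaling $t$, which stays in $(0,1]$ since $t\le 1$) yields the claim.

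The argument is essentially routine, the decisive point being the observation that $\tilde\alpha_A$ separates into two single-variable norms, which collapses the three-index entropy estimate into two independent one-dimensional ones already handled by Corollary \ref{estentrgen1}. The only places needing mild care are the final rescaling bookkeeping and the passage between the product net and the definition of the covering number (the centers of the product net need not lie in $T$, costing at most a factor $2$ in the radius, again absorbed into $L$). No genuine obstacle remains.
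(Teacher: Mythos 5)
Your proof is correct and takes essentially the same approach as the paper: both arguments reduce the claim to the $d=1$ case of Corollary \ref{estentrgen1} with $a$ of order $\sqrt{p}$, combined with the moment bound $\E\tilde{\alpha}_A(\calE^{1},\calE^{2})\le 2\sqrt{2}\,\|A\|_{\{1,2,3\}}$ (which you verify coordinate-wise via Jensen and $\E\xi_i^2=2$, exactly as the paper does implicitly). The only cosmetic difference is bookkeeping: the paper covers $T$ in a single step by viewing $\tilde{\alpha}_A$ as one norm on $\R^{2n}$ and using the inclusion $(B_2^n+\sqrt{p}B_1^n)\times(B_2^n+\sqrt{p}B_1^n)\subset \sqrt{2}B_2^{2n}+2\sqrt{p}B_1^{2n}$, whereas you cover the two projections separately and multiply the nets, which yields the same bound up to the universal constants.
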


\begin{proof}
Let $(\calE^{1},\calE^{2})$ be a standard exponential random vector with values in $\R^n\times \R^n = \R^{2n}$. We have
\begin{displaymath}
\Ex \tilde{\alpha} (\calE^{1},\calE^{2}) \le 2\sqrt{2}\|A\|_{\{1,2,3\}},
\end{displaymath}
hence the corollary follows from Corollary \ref{estentrgen1} with $d=1$ and the fact that $(B_2^n + \sqrt{p}B_1^n)\times(B_2^n + \sqrt{p}B_1^n) \subset \sqrt{2}B_2^{2n} + 2\sqrt{p}B_1^{2n}$.
\end{proof}

To simplify the formulation of the next lemmas let us denote
\begin{displaymath}
F_A^G(T) = \E\sup_{(x,y)\in T} \sum_{k}\sum_{ij}a_{ijk}x_iy_jg_k.
\end{displaymath}

\begin{lem}\label{first_decomposition_lemma}
For $p \ge 1$ let $(x,y) \in (B_2^n+\sqrt{p}B_1^n)\times (B_2^n+\sqrt{p}B_1^n)$ and let $T \subset (B_2^n+\sqrt{p}B_1^n)\times (B_2^n+\sqrt{p}B_1^n)$. Then, for any $l\ge 0$, there exists a decomposition
\begin{displaymath}
T = \bigcup_{i=l}^N T_l,
\end{displaymath}
with $N \le \exp(L2^{2l}p)$, such that for all $l \le N$ ,

\begin{align}\label{shifted_supremum}
F_A^G((x,y) + T_l) \le F_A^G(T_l) + L\tilde{\alpha}((x,y)).
\end{align}
and
\begin{align}\label{diameter_bound}
\Delta_A(T_l) \le 2^{-l}p^{-1/2}s_2^T(A) + 2^{-2l}p^{-1}\|A\|_{\{1,2,3\}}.
\end{align}
\end{lem}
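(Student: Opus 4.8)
The statement packages together two essentially independent assertions about a single decomposition of $T$: a "shifting" bound (\ref{shifted_supremum}) that lets us move the center of mass of each piece at the cost of $\tilde\alpha((x,y))$, and a "diameter" bound (\ref{diameter_bound}) on each piece measured in the metric $\rho_A$. The natural strategy is to produce the decomposition by a single covering argument in the metric $\rho_A$, and then verify the two properties separately. The plan is to first fix the covering, then check the diameter bound (which is almost immediate from the covering radius), and finally derive the shifting bound using the Gaussian concentration/union result of Lemma \ref{gaussian_concentration} together with the entropy count.

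\textbf{Building the decomposition.} I would apply Corollary \ref{first_entropy_cor} with the scale parameter $t = 2^{-l}$ (admissible since $l \ge 0$ forces $t \in (0,1]$). This gives a covering of $T$ in the metric $\rho_A$ by
\[
N \le \exp\bigl(L2^{2l} + L\sqrt{p}\,2^{l}\bigr)
\]
balls of radius $2^{-2l}\|A\|_{\{1,2,3\}} + 2^{-l}s_2^T(A)$; letting $T_l$ denote the intersections of these balls with $T$ and relabeling, we have $T = \bigcup T_l$. The bound $N \le \exp(L2^{2l}p)$ claimed in the statement then follows from $p \ge 1$, which absorbs $\sqrt{p}\,2^{l} \le 2^{2l}p$ (so the two exponents combine into a single $L2^{2l}p$). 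The diameter bound (\ref{diameter_bound}) is then just the statement that each $T_l$ sits inside a ball of the stated radius in $\rho_A$, after dividing the covering radius by $p^{1/2}$ and $p$ respectively to match the normalization in (\ref{diameter_bound}); concretely, one checks $\Delta_A(T_l) \le 2 \cdot \bigl(2^{-2l}\|A\|_{\{1,2,3\}} + 2^{-l}s_2^T(A)\bigr)$ and rewrites this in the form $2^{-l}p^{-1/2}s_2^T(A) + 2^{-2l}p^{-1}\|A\|_{\{1,2,3\}}$ after the appropriate choice of $t$ incorporating the $p$-factors (i.e. taking $t = 2^{-l}p^{-1/2}$ rather than $2^{-l}$, which still lies in $(0,1]$).

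\textbf{The shifting bound.} This is where the real work is, and I expect it to be the main obstacle. Inequality (\ref{shifted_supremum}) asserts that translating $T_l$ by $(x,y)$ changes the Gaussian supremum $F_A^G$ by at most $L\tilde\alpha((x,y))$. The difficulty is that $(x,y)+T_l$ is no longer centered, so one cannot directly reuse the entropy estimate; one must exploit both the linearity of the underlying process $X_{(x,y)} = \sum_{ijk}a_{ijk}x_iy_jg_k$ in the Gaussian variables and the \emph{bilinear} (hence non-additive) dependence on $(x,y)$. The plan is to expand $X_{(x+u,\,y+v)}$ for $(u,v)\in T_l$ into the sum of $X_{(x,y)}$, the two cross terms $\sum a_{ijk}x_iv_jg_k$ and $\sum a_{ijk}u_iy_jg_k$, and the genuine second-order term $X_{(u,v)}$. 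Taking suprema and expectations, $F_A^G((x,y)+T_l)$ is bounded by $F_A^G(T_l)$ plus the supremum over $T_l$ of the two cross terms. Each cross term is a \emph{linear} Gaussian process indexed by $T_l \subset B_2^n + \sqrt{p}B_1^n$, and the key point is that its expected supremum should be controlled by $\tilde\alpha((x,y))$ plus a diameter contribution already absorbed into $F_A^G(T_l)$ via Lemma \ref{gaussian_concentration}. I would handle the centered-versus-supremum discrepancy exactly as in the proof of Lemma \ref{union_of_sets} and Lemma \ref{supremum_d2}: split the cross-term supremum using $\sup \le \sup_{\text{centered}} + |\text{value at a fixed point}|$, bound the fixed-point value of the cross terms by $\tilde\alpha((x,y))$ directly from its definition as a sum of two $\ell_2$-type norms of partial contractions of $A$, and fold the remaining centered fluctuation back into $F_A^G(T_l)$. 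The delicate bookkeeping is ensuring that the diameter of the cross-term process over $T_l$ is dominated by the $\Delta_A(T_l)$ bound from (\ref{diameter_bound}), so that no term outside $F_A^G(T_l) + L\tilde\alpha((x,y))$ survives; this is precisely where the choice of covering radius in the first step must be tuned to match $s_2^T(A)$ and $\|A\|_{\{1,2,3\}}$.
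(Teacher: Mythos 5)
Your first step is exactly the paper's: Corollary \ref{first_entropy_cor} at scale $t=2^{-l}p^{-1/2}$ (up to a harmless factor adjusting covering radius to diameter) gives the partition count $N\le\exp(L2^{2l}p)$ and the diameter bound (\ref{diameter_bound}). The gap is in the shifting bound, precisely at the point you flagged as ``delicate bookkeeping'': the oscillation of the cross terms over $T_l$ is \emph{not} dominated by $\Delta_A(T_l)$, and no tuning of the $\rho_A$-covering radius can make it so. After expanding $(x+u)\otimes(y+v)=x\otimes y+x\otimes v+u\otimes y+u\otimes v$, the cross term indexed by $v$ has increments $\alpha_A(x\otimes(v'-v''))$, i.e.\ it lives in the metric of the matrix $B^2=(b^2_{jk})$ with $b^2_{jk}=\sum_i a_{ijk}x_i$ (symmetrically $B^1$ with $b^1_{ik}=\sum_j a_{ijk}y_j$). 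These metrics involve the \emph{external} shift $(x,y)$ and bear no relation to $\rho_A$ restricted to $T_l$: smallness of $\alpha_A(u'\otimes v'-u''\otimes v'')$ for points of $T_l$ gives no control of $\alpha_A(x\otimes(v'-v''))$, since $x$ need have nothing to do with the first coordinates appearing in $T_l$. Quantitatively, the route you would have to take (Lemma \ref{supremum_d2}, applicable since $\pi_2(T_l)\subset B_2^n+\sqrt{p}B_1^n$) yields the bound $L\big(\|B^2\|_{\{1,2\}}+\sqrt{p}\,\Delta_{B^2}(\pi_2(T_l))\big)$; the first summand is $\le\tilde{\alpha}((x,y))$, but the second is left uncontrolled by your construction. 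A crude pointwise bound does not rescue it either: for $v\in\sqrt{p}B_1^n$ the cross term at a single point can already be of size $\sqrt{p}\,\|B^2\|_{\{1,2\}}$, so the fixed-point-plus-fluctuation split alone loses a factor $\sqrt{p}$.

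The paper closes this with an ingredient absent from your plan: a \emph{second} partition, tailored to the prescribed shift $(x,y)$. It applies Corollary \ref{estentrgen1} with $d=1$, $a=\sqrt{p}$, $t=1/(L\sqrt{p})$ to the norm $\beta((x',y'))=\big(\sum_k(\sum_i b^1_{ik}x'_i)^2\big)^{1/2}+\big(\sum_k(\sum_j b^2_{jk}y'_j)^2\big)^{1/2}$ on $\er^{2n}$, whose expectation against the standard exponential vector is $\le\sqrt{2}\,\tilde{\alpha}((x,y))$; this produces at most $e^{Lp}$ cells on which $\Delta_{B^1}(\pi_1(\cdot))+\Delta_{B^2}(\pi_2(\cdot))\le 2p^{-1/2}\tilde{\alpha}((x,y))$. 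Intersecting with the $\rho_A$-covering keeps $N\le\exp(L2^{2l}p)$ and makes the terms $\sqrt{p}\,\Delta_{B^i}$ of size $L\tilde{\alpha}((x,y))$, which together with $\|B^1\|_{\{1,2\}}+\|B^2\|_{\{1,2\}}=\tilde{\alpha}((x,y))$ and $\Ex\sum_{ijk}a_{ijk}x_iy_jg_k=0$ gives (\ref{shifted_supremum}). So the decomposition asserted by the lemma genuinely depends on $(x,y)$ through the norms $B^1,B^2$; a single covering in the metric $\rho_A$, as you propose, cannot produce it.
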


\begin{proof}
We apply Corollary \ref{first_entropy_cor} with $t = 2^{-l}p^{-1/2}$, which gives us a partition of $T$ into $N \le \exp(L2^{2l}p)$ sets, satisfying
the required diameter bound (\ref{diameter_bound}). Let $B^1 = (b^{1}_{jk})$, $B^2 = (b^{2}_{ik})$ where
\begin{displaymath}
b^{1}_{ik} = \sum_j a_{ijk}y_j, \quad b^{2}_{jk} = \sum_i a_{ijk}x_i
\end{displaymath}

We have
\begin{align*}
\E(\sum_k(\sum_i b^{1}_{ik} \xi_i)^2)^{1/2} +& \E(\sum_k(\sum_j b^{2}_{jk} \xi_j)^2)^{1/2} \\
&\le \sqrt{2}(\sum_{ik}(\sum_j a_{ijk}y_j)^2)^{1/2} + \sqrt{2}(\sum_{jk}(\sum_i a_{ijk}x_i)^2)^{1/2} \\
& = \sqrt{2}\tilde{\alpha}((x,y)),
\end{align*}
therefore by Corollary \ref{estentrgen1} (with $d=1$, $a= \sqrt{p}$ and $t = 1/(L\sqrt{p})$), there exists a partition of $T$ into at most $e^{L p}$ sets $S_l$ such that for all $l$,
\begin{align*}
&\sup_{(x',y'),(x'',y'')\in S_l} \Big[(\sum_k(\sum_i b^{1}_{ik} (x'_i -x''_i))^2)^{1/2} + (\sum_k(\sum_j b^{2}_{jk} (y'_j - y''_j))^2)^{1/2}\Big] \\
&\le \frac{1}{\sqrt{p}}\tilde{\alpha}((x,y)).
\end{align*}

We can intersect this partition with the previous one to obtain a partition of $T$ into at most $e^{C 2^{2l}p}$ sets $T_l$, such that (\ref{diameter_bound}) holds
and the above inequality is satisfied with $T_l$ instead of $S_l$.

Let $\pi_1, \pi_2$ be the projections from $\R^{2n} = \R^n \times \R^n$ onto the first and the second $n$ coordinates respectively and note that
\begin{align}\label{diameter}
&\Delta_{B^1}(\pi_1(T_l)) + \Delta_{B^2}(\pi_2(T_l)) \nonumber\\
&\le 2 \sup_{(x',y'),(x'',y'')\in T} \Big[(\sum_k(\sum_i b^1_{ik} (x'_i - x''_i)^2)^{1/2} + (\sum_k(\sum_j b^2_{jk} (y'_j - y''_j)^2)^{1/2} \Big]\nonumber\\
&\le \frac{2}{\sqrt{p}}\tilde{\alpha}((x,y)).
\end{align}
By the equality $\Ex \sum_{ijk}a_{ijk}x_iy_jg_k = 0$ we get for any $l$,
\begin{align*}
F_A^G((x,y) + T_l) \le& F_A^G(T_l) + \Ex\sup_{(\tilde{x},\tilde{y})\in T_l}\sum_{ijk}a_{ijk}x_i\tilde{y}_j g_k
+ \Ex\sup_{(\tilde{x},\tilde{y})\in T_l}\sum_{ijk}a_{ijk}\tilde{x}_iy_j g_k\\
\le& F_A^G(T_l) + L\Big((\sum_{jk}(\sum_i a_{ijk}x_i)^2)^{1/2} + (\sum_{ik}(\sum_j a_{ijk}y_j)^2)^{1/2}\\
&+\sqrt{p}\Delta_{B^1}(\pi_1(T_l)) + \sqrt{p}\Delta_{B^2}(\pi_2(T_l))\Big)\\
\le& F_A^G(T_l) + L\tilde{\alpha}((x,y)),
\end{align*}
where in the second inequality we used the assumption $T \subset (B_2^n+\sqrt{p}B_1^n)\times (B_2^n+\sqrt{p}B_1^n)$ and Lemma \ref{supremum_d2} (applied to matrices $B^1,B^2$) and
in the last inequality the estimate (\ref{diameter}).
\end{proof}

\begin{lem}\label{second_decomposition_lemma} Let $S$ be a finite subset of $(B_2^n+\sqrt{p}B_1^n)\times (B_2^n+\sqrt{p}B_1^n)$ of cardinality at least 2, such that $S-S \subset (B_2^n+\sqrt{p}B_1^n)\times (B_2^n+\sqrt{p}B_1^n)$. Then, for any $l \ge 0$, there exist finite sets $S_i\subset (B_2^n+\sqrt{p}B_1^n)\times (B_2^n+\sqrt{p}B_1^n)$, and points $(x_i,y_i) \in S_i$ $i =1,\ldots,N$, such that
\begin{itemize}

\item[{\rm (i)}] $2 \le N \le \exp(L2^{2l}p)$,

\item[{\rm(ii)}] $S = \bigcup_{i=1}^N((x_i,y_i)+S_i)$, $S_i - S_i \subset S - S$, $\#S_i \le \#S - 1$,

\item[{\rm(iii)}] $\Delta_A(S_i) \le 2^{-2l}p^{-1}\|A\|_{\{1,2,3\}}$,

\item[{\rm(iv)}] $s_2^{S_i}(A) \le 2^{-l}p^{-1/2}\|A\|_{\{1,2,3\}}$,

\item[{\rm(v)}] $F_A^G((x_i,y_i) + S_i) \le F_A^G(S_i) + Ls_2^S(A)$.
\end{itemize}
\end{lem}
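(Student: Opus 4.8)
The plan is to produce the decomposition by overlaying three successive partitions of $S$, each responsible for one of the conditions (iv), (iii), (v), with a re-centering step inserted between the first two. The guiding observation is that, writing $r = 2^{-l}p^{-1/2}$, condition (iv) reads $s_2^{S_i}(A)\le r\|A\|_{\{1,2,3\}}$ and condition (iii) reads $\Delta_A(S_i)\le r^2\|A\|_{\{1,2,3\}}$; this quadratic relationship is exactly the one produced by Corollary \ref{first_entropy_cor}, whose covering radius is $t^2\|A\|_{\{1,2,3\}}+t\,s_2^T(A)$. I will therefore control $s_2$ first, center, and only then control the diameter $\Delta_A$ on the centered set.

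First I would use the $\tilde\rho_A$-entropy to produce the coarsest partition. Applying Corollary \ref{second_entropy_cor} with $t=\tfrac12 r$ (admissible since $p\ge1$, $l\ge0$) covers $S$ by at most $\exp(L2^{2l}p)$ balls in the metric $\tilde\rho_A$ of radius $\tfrac12 r\|A\|_{\{1,2,3\}}$; assigning each point to a ball splits $S$ into pieces $Q$ of $\tilde\rho_A$-diameter at most $r\|A\|_{\{1,2,3\}}$. In each $Q$ I fix a base point $w_Q\in Q\subset S$ and pass to the centered piece $Q':=Q-w_Q$. Because $w_Q\in Q$, I get $s_2^{Q'}(A)=\sup_{u\in Q'}\tilde\alpha_A(u)\le\operatorname{diam}_{\tilde\rho_A}Q\le r\|A\|_{\{1,2,3\}}$, which is (iv); moreover $Q'\subset S-S\subset(B_2^n+\sqrt{p}B_1^n)^2$ by hypothesis, so $Q'$ remains admissible for the entropy corollaries.

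Next, on each centered set $Q'$ I would control $\Delta_A$ by applying Corollary \ref{first_entropy_cor} to $T=Q'$ with $t=\tfrac14 r$; using the bound $s_2^{Q'}(A)\le r\|A\|_{\{1,2,3\}}$ just established, the covering radius is $(t^2+tr)\|A\|_{\{1,2,3\}}\le\tfrac12 r^2\|A\|_{\{1,2,3\}}$, so the resulting sub-pieces $R\subset Q'$ have $\Delta_A(R)=\operatorname{diam}_{\rho_A}R\le r^2\|A\|_{\{1,2,3\}}$, which is (iii). The essential point is that I partition the \emph{already centered} set $Q'$ directly in $\rho_A$: the bilinear cross-terms that would appear if one tried to transport a $\rho_A$-diameter bound through a translation never arise, and (iii) is a genuine $\rho_A$-diameter bound. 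The number of sub-pieces is again $\exp(L2^{2l}p)$, and $R\subset Q'$ inherits $s_2^R(A)\le r\|A\|_{\{1,2,3\}}$, so (iv) persists. Setting $S_i:=R$ and taking the shift to be $w_Q$ (a point of $S$), one has $w_Q+S_i\subset Q\subset S$ and $S_i-S_i\subset(Q-w_Q)-(Q-w_Q)=Q-Q\subset S-S$.

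It remains to secure (v), which is the main obstacle: unlike (iii) and (iv), the shift-cost for $F_A^G$ does not reduce to a diameter in $\rho_A$ or $\tilde\rho_A$, so I must overlay a third, shift-dependent partition, exactly as in the proof of (\ref{shifted_supremum}) in Lemma \ref{first_decomposition_lemma}. Expanding $F_A^G(w_Q+R)$ and dropping the mean-zero constant term leaves $F_A^G(R)$ plus $\E\sup_{(\tilde x,\tilde y)\in R}\sum_{ijk}a_{ijk}x_{0i}\tilde y_jg_k$ and its symmetric partner, where $w_Q=(x_0,y_0)$. Lemma \ref{supremum_d2} bounds the first by $L(\|B^1\|_{\{1,2\}}+\sqrt{p}\,\Delta_{B^1}(\pi_2 R))$ with $b^1_{jk}=\sum_i a_{ijk}x_{0i}$; here $\|B^1\|_{\{1,2\}}\le\tilde\alpha_A(w_Q)\le s_2^S(A)$ since $w_Q\in S$, while $\Delta_{B^1}(\pi_2R)$ is tamed by applying Corollary \ref{estentrgen1} (with $d=1$, $a=\sqrt{p}$, $t\sim p^{-1/2}$) to the norm generated by $B^1,B^2$, producing a further refinement into $e^{Lp}$ sets on which $\sqrt{p}\big(\Delta_{B^1}(\pi_2R)+\Delta_{B^2}(\pi_1R)\big)\le\tilde\alpha_A(w_Q)\le s_2^S(A)$. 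This gives $F_A^G(w_Q+S_i)\le F_A^G(S_i)+Ls_2^S(A)$, i.e. (v). Finally, the three partition counts multiply to $\exp(L2^{2l}p)$, giving the upper bound in (i); if this yields a single piece I split it in two (legitimate since $\#S\ge2$), which forces $N\ge2$ and hence $\#S_i\le\#S-1$, completing (i) and (ii). The delicate part throughout is keeping all three partitions simultaneously compatible while holding the total number of pieces at $\exp(L2^{2l}p)$, since the shift partition depends on $w_Q$ and must be carried out inside each $Q'$ after the first two partitions are fixed.
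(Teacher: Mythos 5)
Your proposal is correct and follows essentially the same route as the paper: the paper likewise first partitions $S$ via Corollary \ref{second_entropy_cor} with $t=2^{-l-1}p^{-1/2}$ and centers each piece at a point of $S$ to get (iv), and then obtains (iii) and (v) on the centered pieces $T_i\subset S-(x_i,y_i)$ by invoking Lemma \ref{first_decomposition_lemma} — your second and third overlaid partitions (Corollary \ref{first_entropy_cor} for the $\rho_A$-diameter, then Corollary \ref{estentrgen1} with $d=1$ plus Lemma \ref{supremum_d2} for the shift cost) are exactly the content of that lemma, re-derived inline. Your handling of (i)--(ii) (disjoint nonempty pieces, splitting a single piece in two when $\#S\ge 2$ to force $N\ge 2$ and $\#S_i\le \#S-1$) also matches the paper's bookkeeping.
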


\begin{proof}
Corollary \ref{second_entropy_cor}, applied with $t = 2^{-l-1}p^{-1/2}$, gives us a decomposition
\begin{displaymath}
S = \bigcup_{i=1}^{N_1}((x_i,y_i)+T_i),
\end{displaymath}
where $N_1 \le \exp(L2^{2l}p)$, $(x_i,y_i)\in S$ and $s_2^{T_i}(A) \le 2^{-l-1}p^{-1/2}\|A\|_{\{1,2,3\}}$. Since $\#S \ge 2$ we can assume that $N_1 \ge 2$. We can also assume that the sets $(x_i,y_i) + T_i$ are pairwise disjoint and nonempty, which implies that $\#T_i \le \#S - 1$.

Since $T_i \subset S - (x_i,y_i) \subset (B_2^n+\sqrt{p}B_1^n))\times (B_2^n+\sqrt{p}B_1^n))$, by Lemma \ref{first_decomposition_lemma}, it can be further decomposed into the union
\begin{displaymath}
T_i = \bigcup_{j=1}^{N_2}T_{ij},
\end{displaymath}
with $N_2 \le \exp(L2^{2l}p)$, where for all $j$,
\begin{displaymath}
\Delta_A(T_{ij}) \le 2^{-l-1}p^{-1/2}s_2^{T_i}(A) + 2^{-2l-2}p^{-1}\|A\|_{\{1,2,3\}} \le 2^{-2l}p^{-1}\|A\|_{\{1,2,3\}}
\end{displaymath}
and such that
\begin{displaymath}
F_A^G((x_i,y_i) + T_{ij}) \le F_A^G(T_{ij}) + Ls_2^S(A).
\end{displaymath}

Notice that $N=N_1N_2 \le \exp(L2^{2l}p)$, moreover $T_{ij} - T_{ij}\subset S - S$ and $s_2^{T_{ij}}(A) \le s_2^{T_i}(A) \le 2^{-l}p^{-1}\|A\|_{\{1,2,3\}}$. Since $\#T_{ij} \le \#T_i \le \#S - 1$, to get the covering $S_i$ it is enough to renumerate the sets $T_{ij}$.
\end{proof}

We are now ready to prove Proposition \ref{crucial}.

\begin{proof}[Proof of Proposition \ref{crucial}]
Define the numbers $\Delta_l,\tilde{\Delta}_l$, $l\ge 0$ as
\begin{displaymath}
\Delta_0 = \Delta_A(T),\; \tilde{\Delta}_0 = s_2^T(A)
\end{displaymath}
and
\begin{displaymath}
\Delta_l = 2^{2-2l}p^{-1}\|A\|_{\{1,2,3\}}, \; \tilde{\Delta}_l = 2^{1-l}p^{-1/2}\|A\|_{\{1,2,3\}}.
\end{displaymath}

Assume first that $T \subset \frac{1}{2}[(B_2^n+\sqrt{p}B_1^n)\times (B_2^n+\sqrt{p}B_1^n)]$ and define for $r,l \in \N$,
\begin{align*}
c_T(r,l) = \sup\{&F_A^G(S)\colon S \subset (B_2^n+\sqrt{p}B_1^n)\times (B_2^n+\sqrt{p}B_1^n),\\
& S-S \subset T-T,\#S \le r, \Delta_A(S) \le \Delta_l, s_2^S(A) \le \tilde{\Delta}_l\}.
\end{align*}
We have $c_T(1,l) = 0$. Moreover
\begin{align}
c_T(r,0) \ge \sup\{F_A^G(S)\colon S \subset T, \#S \le r\}.
\end{align}

Notice now, that for any $S$ satisfying the constraints from the definition of $c_T(r,l)$, by Lemma \ref{second_decomposition_lemma}, we
can find a decomposotion $S = \bigcup_{i=1}^N((x_i,y_i) + S_i)$, satisfying (i)--(v). Thus
\begin{align*}
F_A^G(S) &\le \max_{i}F_A^G((x_i,y_i) + S_i) + L\sqrt{\log N}\Delta_A(S) \\
&\le \max_i F_A^G(S_i) + Ls_2^S(A) + L 2^l\sqrt{p}\Delta_l \\
&\le c_T(r-1,l+1) + L\tilde{\Delta}_l + L2^l\sqrt{p}\Delta_l.
\end{align*}
Taking the supremum yields
\begin{displaymath}
c_T(r,l) \le c_T(r-1,l+1) + L\tilde{\Delta}_l + L2^l\sqrt{p}\Delta_l,
\end{displaymath}
which gives
\begin{align*}
c_T(r,0) &\le c_T(1,r-1) + L\sum_{l=0}^\infty (\tilde{\Delta}_l + 2^l\sqrt{p}\Delta_l)\\
&\le L\Big(\sqrt{p}\Delta_A(T) + s_2^T(A) + 2p^{-1/2}\|A\|_{\{1,2,3\}}\Big).
\end{align*}
To finish the proof it is now enough to notice that for $T \subset (B_2^n+\sqrt{p}B_1^n)\times (B_2^n+\sqrt{p}B_1^n)$,
\begin{displaymath}
F_A^G(T) = 4\sup_{r \ge 1} c_{\frac{1}{2}T}(r,0).
\end{displaymath}

\paragraph{Remark} Note that the only place in the above argument where the quantities $\Delta_A(T)$ and $s_2^T(A)$ appear is the first step of the induction, when we pass from $l=0$ to $l=1$. All the other steps contribute just proper multiples of $\|A\|_{\{1,2,3\}}$ which are upper bounds on the parameters $\Delta_A(S)$ and $s_2^S(A)$ of the set $S$ considered there.
\end{proof}

\section{The partition theorem \label{partition_section}}

In this section we present partition results which will allow us to pass from the bounds on expectations
of suprema of Gaussian processes developed so far to empirical processes involving general random variables with bounded fourth moments (in particular all random variables with log-concave tails).
\begin{lem}
\label{cent_decomp}
Let $\alpha$ and $\tilde{\alpha}$ be two norms on $\er^{n^2}$ and $\er^{2n}$ respectively.
For any $p\geq 1$ and $T\subset (B_2^n +\sqrt{p}B_1^n)\times (B_2^n+\sqrt{p}B_1^n)$ we can find a decomposition
$T=\bigcup_{l=1}^N (T_l+(x_l,y_l))$ with $N\leq \exp(Lp)$,
$(x_l,y_l)\in T$ such that for any $(x,y),(\tilde{x},\tilde{y})\in T_l$
\[
\alpha(x\otimes y-\tilde{x}\otimes\tilde{y})\leq
\frac{1}{p}\Ex\alpha(\calE^{1}\otimes \calE^{2})
\]
and
\[
\tilde{\alpha}(x,y)\leq \frac{1}{\sqrt{p}}\Ex\tilde{\alpha}(\calE^{1},\calE^{2}).
\]
\end{lem}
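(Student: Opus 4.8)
The plan is to obtain the decomposition by intersecting two coverings of $T$, one controlling the tensor-difference norm $\alpha(x\otimes y-\tilde x\otimes\tilde y)$ and one controlling the linear norm $\tilde\alpha$. Both coverings will come from the entropy machinery already developed, namely Corollary \ref{estentrgen1} and its consequences. For the second (linear) condition I would regard $\tilde\alpha$ as a norm on $\R^{2n}$ and use the fact that $(B_2^n+\sqrt p B_1^n)\times(B_2^n+\sqrt p B_1^n)\subset \sqrt2 B_2^{2n}+2\sqrt p B_1^{2n}$, exactly as in the proof of Corollary \ref{second_entropy_cor}. Applying Corollary \ref{estentrgen1} with $d=1$, $a=\sqrt p$ and a radius scale $t$ of order $p^{-1/2}$ yields a partition of $T$ into $\exp(Lp)$ pieces on each of which the $\tilde\alpha$-diameter is at most $\frac{1}{\sqrt p}\Ex\tilde\alpha(\calE^1,\calE^2)$, as required.

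For the first (tensor) condition, the natural route is to apply Corollary \ref{estentrgen1} with $d=2$ to the norm $\alpha$ on $\R^{n^2}=\R^n\otimes\R^n$. The distance $\rho_\alpha$ on $\R^n\times\R^n$ is precisely $\alpha(x\otimes y-\tilde x\otimes\tilde y)$, and the associated quantity $V_2^T(\alpha,t)$ decomposes as $t^2 V_{\{1,2\}}^T(\alpha)+t(V_{\{1\}}^T(\alpha)+V_{\{2\}}^T(\alpha))$. Here $V_{\{1,2\}}^T(\alpha)=\Ex\alpha(\calE^1\otimes\calE^2)$, so choosing $t$ of order $p^{-1/2}$ makes the leading term $t^2 V_{\{1,2\}}^T(\alpha)$ of order $\frac1p\Ex\alpha(\calE^1\otimes\calE^2)$, which is the target radius. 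The entropy bound $\exp(L_d t^{-2}+L_d a t^{-1})$ with $a=\sqrt p$ and $t\sim p^{-1/2}$ again gives $\exp(Lp)$ pieces.

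The step I expect to require the most care is the treatment of the two mixed terms $V_{\{1\}}^T(\alpha)$ and $V_{\{2\}}^T(\alpha)$ appearing in $V_2^T(\alpha,t)$. Unlike $V_{\{1,2\}}^T(\alpha)=\Ex\alpha(\calE^1\otimes\calE^2)$, these are suprema over $T$ of mixed deterministic/random tensors and are not a priori controlled by the quantities $\Ex\alpha(\calE^1\otimes\calE^2)$ and $\Ex\tilde\alpha(\calE^1,\calE^2)$ allowed in the conclusion. To handle them I would either absorb them into the $\tilde\alpha$-condition by an appropriate comparison (bounding $V_{\{1\}}^T(\alpha)$ and $V_{\{2\}}^T(\alpha)$ by a constant multiple of $\sup_{(x,y)\in T}\tilde\alpha(x,y)$, which is finite on the bounded set $T$), or, more cleanly, choose $t\sim p^{-1/2}$ so that the mixed terms, scaled by $t$, contribute diameters that the final intersection absorbs; since $T\subset(B_2^n+\sqrt p B_1^n)^2$ the components $x,y$ have $\ell_2$-norm of order $\sqrt p$, so the products $t\cdot V_{\{1\}}^T(\alpha)$ are of the same order $\frac1p\Ex\alpha(\calE^1\otimes\calE^2)$ as the main term.

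Having produced the two partitions, each of cardinality $\exp(Lp)$, I would take their common refinement, obtaining $N\le\exp(Lp)$ pieces on each of which both diameter bounds hold simultaneously; writing each refined piece as $T_l+(x_l,y_l)$ with a distinguished center $(x_l,y_l)\in T$ completes the construction. The only genuinely nontrivial verification is the bookkeeping in the previous paragraph showing that the mixed terms do not inflate the tensor-difference radius beyond $\frac1p\Ex\alpha(\calE^1\otimes\calE^2)$; once that is settled, the statement follows directly from Corollary \ref{estentrgen1}.
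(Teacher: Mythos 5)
There is a genuine gap, and it sits exactly where you predicted: the mixed terms $V_{\{1\}}^T(\alpha)$, $V_{\{2\}}^T(\alpha)$. Your quantitative claim that for $t\sim p^{-1/2}$ the products $t\,V_{\{1\}}^T(\alpha)$ are of order $\frac1p\Ex\alpha(\calE^1\otimes\calE^2)$ is off by a factor of $p^2$. Since $T\subset(B_2^n+\sqrt p B_1^n)^2$, the components satisfy $\|x\|_2\lesssim\sqrt p$, and $V_{\{2\}}^T(\alpha)=\sup_{(x,y)\in T}\Ex\alpha(x\otimes\calE^2)$ can genuinely be of order $\sqrt p\,M$ where $M=\Ex\alpha(\calE^1\otimes\calE^2)$: take $\alpha(z)=|z_{11}|$ and $x=\sqrt p\,e_1\in\sqrt pB_1^n$, which gives $\Ex\alpha(x\otimes\calE^2)=\sqrt p=\sqrt p\,M$. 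Hence with $t\sim p^{-1/2}$ the mixed contribution to $V_2^T(\alpha,t)$ is of order $M$, not $M/p$; to force it down to $M/p$ by shrinking $t$ you would need $t\sim p^{-3/2}$, and then Corollary \ref{estentrgen1} only gives $\exp(Lp^3)$ pieces instead of $\exp(Lp)$. Your fallback (i) is also a dead end: $\tilde\alpha$ is an \emph{arbitrary} norm on $\er^{2n}$, completely unrelated to $\alpha$, so there is no comparison of $V_{\{1\}}^T(\alpha)+V_{\{2\}}^T(\alpha)$ with $\sup_{(x,y)\in T}\tilde\alpha(x,y)$, and in any case a merely finite bound cannot produce the prescribed radius $\frac1pM$. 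A single-stage application of Corollary \ref{estentrgen1} to $T$ itself therefore cannot work.

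The missing idea is a two-stage decomposition with \emph{recentering}, which is how the paper proceeds. One introduces the auxiliary norm $\beta((x,y))=\Ex\alpha(x\otimes\calE^2)+\Ex\alpha(\calE^1\otimes y)$ on $\er^{2n}$ and first applies Corollary \ref{estentrgen1} with $d=1$, $a=\sqrt p$, $t=p^{-1/2}$ simultaneously to $\beta$ and $\tilde\alpha$, obtaining $T=\bigcup_{l\le N_0}S_l$, $N_0\le\exp(Lp)$, with $\beta$-diameter at most $M/\sqrt p$ and $\tilde\alpha$-diameter at most $\tilde M/\sqrt p$ on each $S_l$. One then picks $(x_l,y_l)\in S_l$ and translates: on $\tilde S_l=S_l-(x_l,y_l)$ one now has $\sup_{(x,y)\in\tilde S_l}\beta((x,y))\le M/\sqrt p$, so that
\[
V_2^{\tilde S_l}\Big(\alpha,\tfrac{1}{2\sqrt p}\Big)=\frac{1}{4p}M+\frac{1}{2\sqrt p}\sup_{(x,y)\in\tilde S_l}\beta((x,y))\le\frac1pM,
\]
and only now does the $d=2$ entropy estimate (applied to $\tilde S_l$ with $t=p^{-1/2}/2$) give pieces of $\rho_\alpha$-diameter $\frac1pM$ with $\exp(Lp)$ pieces. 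Note also that the lemma's conclusion bounds $\alpha(x\otimes y-\tilde x\otimes\tilde y)$ for points of the \emph{recentered} sets $T_l$, and $\rho_\alpha$ is not translation invariant; your plan of taking a common refinement of two coverings of $T$ and naming a center afterwards establishes (at best) diameter bounds for points of $T$ itself, which is a different statement. The translation must happen \emph{before} the second entropy application — that is precisely what makes the mixed terms small — and it is simultaneously what produces the conclusion in the recentered form.
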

\begin{proof}
Let
\[
M:=\Ex\alpha(\calE^{1}\otimes \calE^{2}) \mbox{ and }
\tilde{M}:= \Ex\tilde{\alpha}(\calE^{1}, \calE^{2}).
\]
Define norm $\beta$ on $\er^{2n}$ by
\[
\beta((x,y))=\Ex\alpha(x\otimes \calE^{2})+\Ex\alpha(\calE^{1}\otimes y).
\]
By Corollary \ref{estentrgen1} with $d=1$, $a = \sqrt{p}$ and $t=p^{-1/2}$ we can decompose $T=\bigcup_{l=1}^{N_0}S_l$ in such a way that
$N_0\leq \exp(Lp)$ and
\[
\beta(x-\tilde{x},y-\tilde{y})\leq \frac{1}{\sqrt{p}}M,\quad
\tilde{\alpha}(x-\tilde{x},y-\tilde{y})\leq \frac{1}{\sqrt{p}}\tilde{M}
\]
for any $(x,y),(\tilde{x},\tilde{y})\in S_l$.
Let us choose any $(x_l,y_l)\in S_l$, put $\tilde{S}_l=S_l-(x_l,y_l)$ and notice that
\begin{align*}
V_2^{\tilde{S}_l}(\alpha,\frac{1}{2\sqrt{p}}) = \frac{1}{4p}M + \frac{1}{2\sqrt{p}}\sup_{(x,y)\in \tilde{S}_l}\beta((x,y)) \le \frac{1}{p}M.
\end{align*}
Hence again by Corollary \ref{estentrgen1} with $t=p^{-1/2}/2$ we can
decompose $\tilde{S}_l=\bigcup_{k=1}^{N_l}T_{l,k}$ with $N_l\leq \exp(Lp)$
and $\alpha(x\otimes y-\tilde{x}\otimes\tilde{y})\leq  \frac{1}{p}M$
for all $(x,y),(\tilde{x},\tilde{y})\in T_{l,k}$.
\end{proof}

\begin{thm}\label{universal_partition}
For any $p\geq 1$ and $T\subset (B_2^n+\sqrt{p}B_1^n) \times (B_2^n+\sqrt{p}B_1^n)$ we can find a decomposition
$T=\bigcup_{l=1}^N (T_l+(x_l,y_l))$ with $N\leq \exp(Lp)$,
$(x_l,y_l)\in T$ such that for any $z_k$,
\[
\Ex \sup_{(x,y)\in T_l} \sum_{ijk}a_{ijk}x_iy_jz_kg_k\leq
\frac{L}{\sqrt{p}}\Big(\sum_{ijk} a_{ijk}^2z_k^{4}\Big)^{1/4}
\Big(\sum_{ijk}a_{ijk}^2\Big)^{1/4}.
\]
\end{thm}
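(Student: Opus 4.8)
The plan is to produce a single, $z$-independent partition by applying Lemma \ref{cent_decomp} to two carefully chosen norms, and then, on each piece, to bound the $z$-weighted Gaussian supremum through Proposition \ref{crucial} applied to the matrix $A^z:=(a_{ijk}z_k)_{ijk}$. Write $M_k=\sum_{ij}a_{ijk}^2$ and $R(z)=(\sum_{ijk}a_{ijk}^2z_k^4)^{1/4}(\sum_{ijk}a_{ijk}^2)^{1/4}$, so the target is $\E\sup_{(x,y)\in T_l}\sum_{ijk}a_{ijk}x_iy_jz_kg_k\le Lp^{-1/2}R(z)$. The two quantities governing Proposition \ref{crucial} for $A^z$ are $\Delta_{A^z}(T_l)=\sup\alpha_z(x\otimes y-\tilde x\otimes\tilde y)$ and $s_2^{T_l}(A^z)=\sup_{(x,y)\in T_l}\tilde\alpha_z(x,y)$, where $\alpha_z(w)=(\sum_k z_k^2(\sum_{ij}a_{ijk}w_{ij})^2)^{1/2}$ and $\tilde\alpha_z$ is the $z$-weighted version of $\tilde\alpha_A$; both depend on $z$, so we cannot simply feed them into Lemma \ref{cent_decomp}.

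The key device is to strip off the dependence on $z$ by one Cauchy--Schwarz inequality. Introduce the $z$-free norms $\alpha(w)=\big(\sum_k M_k^{-1}(\sum_{ij}a_{ijk}w_{ij})^4\big)^{1/4}$ on $\R^{n^2}$ and $\tilde\alpha(x,y)=\big(\sum_k M_k^{-1}(\sum_j(\sum_i a_{ijk}x_i)^2)^2\big)^{1/4}+\big(\sum_k M_k^{-1}(\sum_i(\sum_j a_{ijk}y_j)^2)^2\big)^{1/4}$ on $\R^{2n}$ (these are seminorms, being weighted $\ell_4$-combinations of Euclidean seminorms; one may perturb to genuine norms if desired). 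Writing $c_k=\sum_{ij}a_{ijk}w_{ij}$ and splitting the $k$-th summand of $\alpha_z(w)^2=\sum_k z_k^2c_k^2$ as $(z_k^2M_k^{1/2})\cdot(c_k^2M_k^{-1/2})$, Cauchy--Schwarz yields $\alpha_z(w)^2\le(\sum_k z_k^4M_k)^{1/2}\alpha(w)^2$, and the analogous splitting gives $\tilde\alpha_z\le(\sum_k z_k^4M_k)^{1/4}\tilde\alpha$. Since $\sum_k z_k^4M_k=\sum_{ijk}a_{ijk}^2z_k^4$, these prefactors are precisely the $z$-dependent part of $R(z)$.

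I would then apply Lemma \ref{cent_decomp} to the fixed pair $(\alpha,\tilde\alpha)$, obtaining a decomposition $T=\bigcup_{l=1}^N(T_l+(x_l,y_l))$ with $N\le\exp(Lp)$ on whose pieces $\alpha(x\otimes y-\tilde x\otimes\tilde y)\le p^{-1}\E\alpha(\calE^1\otimes\calE^2)$ and $\tilde\alpha(x,y)\le p^{-1/2}\E\tilde\alpha(\calE^1,\calE^2)$. Each of the two expectations is at most $L(\sum_{ijk}a_{ijk}^2)^{1/4}$: by Jensen (concavity of $t\mapsto t^{1/4}$) it suffices to bound $\E(\sum_{ij}a_{ijk}\xi_i^1\xi_j^2)^4$ and $\E(\sum_j(\sum_i a_{ijk}\xi_i)^2)^2$ by $LM_k^2$, which follows from hypercontractivity of order-two chaoses (respectively moment comparison for Euclidean norms of vectors with log-concave tails) together with the elementary identities $\E(\sum_{ij}a_{ijk}\xi_i^1\xi_j^2)^2=4M_k$ and $\E\sum_j(\sum_i a_{ijk}\xi_i)^2=2M_k$. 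Combining the domination with these estimates gives, uniformly in $z$, the bounds $\Delta_{A^z}(T_l)\le Lp^{-1}R(z)$ and $s_2^{T_l}(A^z)\le Lp^{-1/2}R(z)$.

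Finally, since $(x_l,y_l)\in T\subset(B_2^n+\sqrt pB_1^n)^2$ forces $T_l\subset(2(B_2^n+\sqrt pB_1^n))^2$, the rescaled set $\frac{1}{2}T_l$ lies in $(B_2^n+\sqrt pB_1^n)^2$, so Proposition \ref{crucial} applies to $A^z$ on $\frac{1}{2}T_l$. Its three terms are $\sqrt p\,\Delta_{A^z}(\frac{1}{2}T_l)=\tfrac{\sqrt p}{4}\Delta_{A^z}(T_l)\le Lp^{-1/2}R(z)$, then $s_2^{\frac{1}{2}T_l}(A^z)=\tfrac12 s_2^{T_l}(A^z)\le Lp^{-1/2}R(z)$, and $p^{-1/2}(\sum_{ijk}a_{ijk}^2z_k^2)^{1/2}\le p^{-1/2}R(z)$, the last step being Cauchy--Schwarz on $\sum_{ijk}|a_{ijk}|\cdot|a_{ijk}|z_k^2$. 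Rescaling back via $\E\sup_{T_l}=4\E\sup_{\frac{1}{2}T_l}$ yields the assertion. The main obstacle is exactly the design of the $z$-free norms $\alpha,\tilde\alpha$: the weights $M_k^{-1}$ and the fourth powers are what let one fixed partition simultaneously control $\Delta_{A^z}$ and $s_2^{A^z}$ for every $z$ while keeping the expected norms at the level $(\sum a_{ijk}^2)^{1/4}$. For $p\in[1,2)$ one simply invokes Proposition \ref{crucial} with $p$ replaced by $2$, which only improves the constants.
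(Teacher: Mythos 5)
Your proof is correct and follows essentially the same route as the paper: the paper also strips the $z$-dependence by a single Cauchy--Schwarz, introducing the fixed fourth-power norms $\beta(x)=\big(\sum_{k}(\sum_{ij}a_{ijk}x_{ij})^{4}/\sum_{ij}a_{ijk}^2\big)^{1/4}$ and an analogous $\tilde{\beta}$, bounds $\Ex\beta(\calE^1\otimes\calE^2)$ and $\Ex\tilde{\beta}(\calE^1,\calE^2)$ by $L(\sum_{ijk}a_{ijk}^2)^{1/4}$ via comparison of fourth and second moments of exponential chaoses, and then combines Lemma \ref{cent_decomp} with Proposition \ref{crucial} exactly as you do. Your $\tilde{\alpha}$ uses the per-$k$ weights $M_k^{-1}$ where the paper's $\tilde{\beta}$ normalizes per $(j,k)$ by $\sum_i a_{ijk}^2$, but both choices satisfy the same Cauchy--Schwarz domination and expectation bound, so this is only a cosmetic variation (and your explicit handling of the $\frac12 T_l$ rescaling and of $p\in[1,2)$ is sound).
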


\begin{proof}
Let
\[
\alpha_{z}(x):=(\sum_{k}z_k^2(\sum_{ij}a_{ijk}x_{ij})^{2})^{1/2}
\]
and
\[
\tilde{\alpha}_{z}(x,y):=(\sum_{j,k}z_k^2(\sum_{i}a_{ijk}x_{i})^{2})^{1/2}+
(\sum_{i,k}z_k^2(\sum_{j}a_{ijk}y_{j})^{2})^{1/2}.
\]
Notice that by the Schwarz inequality
\begin{equation}
\label{Schw}
\alpha_{z}(x)\leq \Big(\sum_{ijk} a_{ijk}^2z_k^{4}\Big)^{1/4}\beta(x),\quad
\tilde{\alpha}_{z}(x,y)\leq \Big(\sum_{ijk} a_{ijk}^2z_k^{4}\Big)^{1/4}
\tilde{\beta}(x,y),
\end{equation}
where
\[
\beta(x):=
\Big(\sum_{k}\frac{(\sum_{ij}a_{ijk}x_{ij})^{4}}{\sum_{ij}a_{ijk}^2}\Big)^{1/4}
\]
and
\[
\tilde{\beta}(x,y):=
\Big(\sum_{j,k}\frac{(\sum_{i}a_{ijk}x_{i})^{4}}{\sum_{i}a_{ijk}^2}\Big)^{1/4}
+\Big(\sum_{i,k}\frac{(\sum_{j}a_{ijk}y_{j})^{4}}{\sum_{j}a_{ijk}^2}\Big)^{1/4}.
\]
Notice that (since the 4-th and 2-nd moments of chaoses generated by exponential variables are comparable) we have
\[
\Ex\beta(\calE^{1}\otimes \calE^{2})\leq (\Ex\beta^4(\calE^{1}\otimes \calE^{2}))^{1/4}
\leq L\Big(\sum_{ijk}a_{ijk}^2\Big)^{1/4}
\]
and
\[
\Ex\tilde{\beta}(\calE^{1},\calE^{2})\leq (\Ex\tilde{\beta}^4(\calE^{1},\calE^{2}))^{1/4}
\leq L\Big(\sum_{ijk}a_{ijk}^2\Big)^{1/4}.
\]

Hence by Lemma \ref{cent_decomp} we may decompose
$T=\bigcup_{l=1}^N(\tilde{T}_l+(x_l,y_l))$
with $N\leq \exp(Lp)$,
$(x_l,y_l)\in T$ in such a way that for any $(x,y), (\tilde{x},\tilde{y}) \in \tilde{T}_l$,
\[
\beta(x\otimes y-\tilde{x}\otimes \tilde{y})\leq
\frac{1}{Lp}\Big(\sum_{ijk}a_{ijk}^2\Big)^{1/4}
\mbox{ and }
\tilde{\beta}(x,y)\leq \frac{1}{L\sqrt{p}}\Big(\sum_{ijk}a_{ijk}^2\Big)^{1/4}.
\]
The assertion follows by Proposition \ref{crucial} and (\ref{Schw}).
\end{proof}

\begin{cor} \label{4th_moment} Let $Z_1,\ldots,Z_n$ be independent mean zero random variables. For any $p \ge 1$ there exists a decomposition $(B_2^n + \sqrt{p}B_1^n)^2 = \bigcup_{l \le N} ((x_l,y_l) + T_l)$, where $N \le \exp(Lp)$, $(x_l,y_l) \in (B_2^n + \sqrt{p}B_1^n)^2$
and for every $l$,
\begin{align*}
\E\sup_{(x,y) \in T_l}\sum_{ijk}a_{ijk}x_iy_jZ_k &\le \frac{L}{\sqrt{p}}\Big(\sum_{ijk}a_{ijk}^2\Big)^{1/4}  \E \Big(\sum_{ijk}a_{ijk}^2 Z_k^4\Big)^{1/4}\\ &\le \frac{L}{\sqrt{p}}\|A\|_{\{1,2,3\}}\max_k \|Z_k\|_4.
\end{align*}
\end{cor}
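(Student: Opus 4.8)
The plan is to derive Corollary \ref{4th_moment} from Theorem \ref{universal_partition} by a comparison/symmetrization argument that replaces the fixed Gaussian-times-$z_k$ structure in the theorem with the genuinely random coefficients $Z_k$. First I would apply Theorem \ref{universal_partition} directly: the decomposition $T = \bigcup_{l=1}^N(T_l + (x_l,y_l))$ with $N \le \exp(Lp)$ is exactly the one furnished by that theorem, so the set $(B_2^n + \sqrt{p}B_1^n)^2$ gets partitioned once and for all, and the partition does not depend on the realization of $(Z_k)$. For each fixed piece $T_l$, the conclusion of Theorem \ref{universal_partition} says that for \emph{every} deterministic sequence $z = (z_k)$,
\[
\Ex_g \sup_{(x,y)\in T_l}\sum_{ijk}a_{ijk}x_iy_jz_kg_k \le \frac{L}{\sqrt{p}}\Big(\sum_{ijk}a_{ijk}^2z_k^4\Big)^{1/4}\Big(\sum_{ijk}a_{ijk}^2\Big)^{1/4}.
\]

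Next I would introduce the randomness of $Z_k$ by a standard symmetrization. Since $Z_1,\ldots,Z_n$ are mean zero and independent of the Gaussians, a symmetrization inequality followed by the observation that $g_k Z_k$ has the same distribution as $|g_k| Z_k$ (equivalently, that $(\ve_k|g_k|)$ can replace $g_k$ after introducing Rademachers $\ve_k$) lets me pass from $\sum a_{ijk}x_iy_jZ_k$ to a mixed expression $\sum a_{ijk}x_iy_j Z_k g_k$, up to a universal constant. Concretely, for each fixed $l$,
\[
\Ex_Z \sup_{(x,y)\in T_l}\sum_{ijk}a_{ijk}x_iy_jZ_k \le L\,\Ex_Z\Ex_g \sup_{(x,y)\in T_l}\sum_{ijk}a_{ijk}x_iy_j Z_k g_k.
\]
Now I would condition on $Z$, apply the display from Theorem \ref{universal_partition} with $z_k = Z_k$, and take the outer expectation over $Z$:
\[
\Ex_Z \sup_{(x,y)\in T_l}\sum_{ijk}a_{ijk}x_iy_jZ_k \le \frac{L}{\sqrt{p}}\Big(\sum_{ijk}a_{ijk}^2\Big)^{1/4}\Ex_Z\Big(\sum_{ijk}a_{ijk}^2 Z_k^4\Big)^{1/4},
\]
which is precisely the first inequality of the corollary.

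For the second, cleaner bound I would pull the fourth moments outside. Writing $\sum_{ijk}a_{ijk}^2 Z_k^4 \le (\max_k Z_k^4)\sum_{ijk}a_{ijk}^2$ inside the quartic root is too lossy under the expectation, so instead I would bound $\Ex_Z(\sum_{ijk}a_{ijk}^2 Z_k^4)^{1/4}$ via Jensen by $(\Ex_Z\sum_{ijk}a_{ijk}^2 Z_k^4)^{1/4} = (\sum_{ijk}a_{ijk}^2\,\Ex Z_k^4)^{1/4}$, and then estimate $\Ex Z_k^4 \le \max_k\|Z_k\|_4^4$ to factor out $\max_k\|Z_k\|_4$, leaving $(\sum_{ijk}a_{ijk}^2)^{1/4} = \|A\|_{\{1,2,3\}}^{1/2}$ times itself, i.e. $\|A\|_{\{1,2,3\}}\max_k\|Z_k\|_4/\sqrt{p}$. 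I expect the main subtlety to be the symmetrization step: one must be careful that the Gaussian multiplier can legitimately be inserted against a \emph{fixed} partition $T_l$ (the theorem already quantifies over all $z$, so conditioning on $Z$ is unproblematic, but the symmetrization constant and the comparison of $g_kZ_k$ with $Z_k$ require the independence and mean-zero hypotheses to be invoked correctly). The rest is routine application of Jensen's inequality and the identity $\|A\|_{\{1,2,3\}} = (\sum_{ijk}a_{ijk}^2)^{1/2}$.
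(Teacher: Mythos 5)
Your proposal is correct and follows essentially the same route as the paper: the paper likewise takes the partition from Theorem \ref{universal_partition} once and for all, passes from $Z_k$ to $Z_kg_k$ via classical symmetrization and Gaussian--Rademacher comparison ($\E\sup_{(x,y)\in T_l}\sum_{ijk}a_{ijk}x_iy_jZ_k \le 2\,\E\sup\sum_{ijk}a_{ijk}x_iy_jZ_k\varepsilon_k \le \sqrt{2\pi}\,\E\sup\sum_{ijk}a_{ijk}x_iy_jZ_kg_k$), applies the theorem conditionally on $Z$, and finishes with Jensen's inequality exactly as you do. Your one loose phrase --- that $g_kZ_k$ and $|g_k|Z_k$ are equidistributed, which would require symmetry of $Z_k$ --- is repaired by your own parenthetical, since it is the inserted Rademachers $\varepsilon_k$ (together with $\E|g_k|=\sqrt{2/\pi}$ and Jensen) that supply the symmetry, so the argument is sound as intended.
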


\begin{proof}
It is enough to take the decomposition given by Theorem \ref{universal_partition} and notice that by classical symmetrization inequalities and comparison of Gaussian and Rademacher averages, we have
\begin{align*}
\E\sup_{(x,y) \in T_l}\sum_{ijk}a_{ijk}x_iy_jZ_k &\le 2\E\sup_{(x,y) \in T_l}\sum_{ijk}a_{ijk}x_iy_jZ_k\varepsilon_k \\
&\le \sqrt{2\pi} \E\sup_{(x,y) \in T_l}\sum_{ijk}a_{ijk}x_iy_jZ_kg_k,
\end{align*}
where $\varepsilon_k$ (resp. $g_k$) are sequences of i.i.d Rademacher (resp. standard Gaussian) random variables, independent of the sequence $Z_k$.
\end{proof}

\section{Proof of Theorem \ref{upper} \label{proof_section}}
The case $d= 1$ of the theorem has been proved in \cite{GK}, whereas the case $d=2$ in \cite{L1}, thus it remains to prove the case $d=3$.

To simplify the notation we will write $X_i,Y_j,Z_k$ instead of $X_{i_1}^{1}, X_{i_2}^{2}, X_{i_3}^{3}$ respectively.
Applying the theorem in the (already known) case of chaoses of order two, conditionally on $Z_k$'s yields
\begin{align*}
\E\Big| \sum_{ijk}a_{ijk}X_iY_jZ_k\Big|^p \le& L^p\Big(\E\Big(\Big\|\Big(\sum_k a_{ijk}Z_k\Big)_{ij}\Big\|_{\{1,2\},p}^{\cal N'}\Big)^p \\
&+
\E\Big(\Big\|\Big(\sum_k a_{ijk}Z_k\Big)_{ij}\Big\|_{\{1\}\{2\},p}^{\cal N'}\Big)^p\Big),
\end{align*}
where ${\cal N}' = (N_i^j)_{i\le n, j\le 2}$. Thus by Lemma \ref{tail_est_log_conc} we get
\begin{align}\label{inequality_after_induction}
\Big\| \sum_{ijk}a_{ijk}X_iY_jZ_k\Big\|_p \le& L\Big(\E\Big\|\Big(\sum_k a_{ijk}Z_k\Big)_{ij}\Big\|_{\{1,2\},p}^{\cal N'}
+
\E\Big\|\Big(\sum_k a_{ijk}Z_k\Big)_{ij}\Big\|_{\{1\}\{2\},p}^{\cal N'}\nonumber\\
&+ \|A\|_{\{1,2\}\{3\},p}^{\cal N} + \|A\|_{\{1\}\{2\}\{3\},p}^{\cal N}\Big).
\end{align}

We are therefore left with the problem of estimation of the expectations on the right hand side of the above inequality. This will be achieved in Lemmas \ref{almost_main_lemma} and \ref{main_lemma} below.

Let us first state a simple lemma which will be used repeatedly in the sequel. It is an almost immediate consequence
of the inequality (\ref{no_abs}), therefore we will skip its proof.

\begin{lem} If ${\cal J}$ is a partition of $\{1,2,3\}$ and $\# {\cal J} = r$, then for any $t \ge 1$,
$\|A\|_{{\cal J},tp}^{\cal N} \le t^r\|A\|_{{\cal J},p}^{\cal N}$.
\end{lem}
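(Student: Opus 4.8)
The plan is to exploit the positive homogeneity of the Euclidean norm together with the sub-homogeneity estimate (\ref{no_abs}) in order to rescale a point admissible for the constraint at level $tp$ into one admissible at level $p$, while keeping track of how the $r$-linear objective transforms under this rescaling.

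Fix the partition ${\cal J} = \{I_1,\ldots,I_r\}$ and a choice of representatives $s_1\in I_1,\ldots,s_r\in I_r$. Since $\|A\|_{{\cal J},p}^{\cal N}$ is by definition the sum over all such choices of the single-$(s_1,\ldots,s_r)$ suprema, it suffices to prove the inequality for one such summand and then sum. First I would take any collection $(x^l)_{l=1}^r$ satisfying the level-$tp$ constraint, i.e.\ $\sum_{i_{s_l}}\hat{N}_{i_{s_l}}^{s_l}(\|(x_{\bfi_{I_l}}^{l})_{\bfi_{I_l\setminus\{s_l\}}}\|_2)\le tp$ for each $l$, and set $y^l=x^l/t$.

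The key step is to check that $(y^l)_{l=1}^r$ is admissible at level $p$. Because $\|\cdot\|_2$ is positively homogeneous, $\|(y_{\bfi_{I_l}}^{l})_{\bfi_{I_l\setminus\{s_l\}}}\|_2 = t^{-1}\|(x_{\bfi_{I_l}}^{l})_{\bfi_{I_l\setminus\{s_l\}}}\|_2$, so applying (\ref{no_abs}) with $u=t\ge 1$ gives for each index $i_{s_l}$
\[
\hat{N}_{i_{s_l}}^{s_l}\big(\|(y_{\bfi_{I_l}}^{l})_{\bfi_{I_l\setminus\{s_l\}}}\|_2\big)
\le \frac{1}{t}\,\hat{N}_{i_{s_l}}^{s_l}\big(\|(x_{\bfi_{I_l}}^{l})_{\bfi_{I_l\setminus\{s_l\}}}\|_2\big).
\]
Summing over $i_{s_l}$ turns the level-$tp$ bound into the level-$p$ bound, so $(y^l)$ indeed satisfies the constraint defining the corresponding summand of $\|A\|_{{\cal J},p}^{\cal N}$.

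Finally I would compare objectives: since $(x^1,\ldots,x^r)\mapsto \sum_{\bfi}a_{\bfi}\prod_{l=1}^r x_{\bfi_{I_l}}^l$ is $r$-linear and $x^l=t\,y^l$, we have $\sum_{\bfi}a_{\bfi}\prod_{l=1}^r x_{\bfi_{I_l}}^l = t^r\sum_{\bfi}a_{\bfi}\prod_{l=1}^r y_{\bfi_{I_l}}^l$, and the right-hand side is bounded by $t^r$ times the corresponding level-$p$ supremum. Taking the supremum over all level-$tp$ admissible $(x^l)$ and then summing over $s_1,\ldots,s_r$ yields $\|A\|_{{\cal J},tp}^{\cal N}\le t^r\|A\|_{{\cal J},p}^{\cal N}$. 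There is no serious obstacle; the only point requiring care is that $\hat{N}$ is not itself homogeneous (it equals $t^2$ below $1$ and $N$ above), so a naive equality-based rescaling of the constraint would fail. This is precisely why one invokes the convexity-driven sub-homogeneity (\ref{no_abs}), and why the exponent $r=\#{\cal J}$ (one factor of $t$ per block of the partition) appears on the right-hand side.
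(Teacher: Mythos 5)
Your proposal is correct and is exactly the argument the paper has in mind: the authors skip the proof, remarking only that the lemma is an almost immediate consequence of inequality (\ref{no_abs}), and your rescaling $y^l = x^l/t$ combined with the sub-homogeneity of $\hat{N}_i^j$ and $r$-linearity of the objective (one factor of $t$ per block) is precisely that consequence, carried out summand by summand over the choices of $s_1,\ldots,s_r$. No gaps.
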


\begin{lem} \label{almost_main_lemma} Let ${\cal N}' = (N_{i}^j)_{i\le n,j\le 2}$. Then for any $p \ge 2$,
\begin{displaymath}
\E\|(\sum_{k} a_{ijk} Z_k)_{ij}\|_{\{1,2\},p}^{\cal N'} \le L(\|A\|_{\{1,2,3\},p}^{\cal N} + \|A\|_{\{1,2\}\{3\},p}^{\cal N}).
\end{displaymath}
\end{lem}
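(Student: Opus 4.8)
The quantity to estimate is
\[
\E\Big\|\Big(\sum_k a_{ijk}Z_k\Big)_{ij}\Big\|_{\{1,2\},p}^{\cal N'}
= \E\sup\Big\{\sum_{ijk}a_{ijk}x_{ij}Z_k \colon \sum_i \hat N_i^1\big(\|(x_{ij})_j\|_2\big)\le p\Big\},
\]
where I abbreviate $Z_k$ for the symmetric log-concave variables $X_k^3$. The natural index set is
\[
T=\Big\{(x_{ij})\colon \sum_i \hat N_i^1\big(\|(x_{ij})_j\|_2\big)\le p\Big\},
\]
and the first thing I would do is replace $T$ by a more geometric set. Using the normalization \eqref{norm} and convexity of the $N_i^1$, the single-index constraint $\sum_i\hat N_i^1(r_i)\le p$ on the row-norms $r_i=\|(x_{ij})_j\|_2$ forces these norms to live (up to a universal constant) in a ball of the form $\sqrt{p}B_2 + \text{(something)}B_1$ at the level of the row index $i$; this is exactly the kind of reduction that feeds Corollary \ref{4th_moment}, whose decomposition is stated for sets inside $(B_2^n+\sqrt p B_1^n)^2$. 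So the preliminary (and somewhat technical) step is to argue that $T$, after rescaling, embeds into such a product body, so that the machinery of Section \ref{partition_section} applies.

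\textbf{Main steps.}
Once $T$ sits inside the right product body, the core of the argument is to bound $\E\sup_{(x,y)\in T}\sum_{ijk}a_{ijk}x_iy_jZ_k$ — here the supremum is over a double-indexed set because $\|\cdot\|_{\{1,2\},p}$ already involves a sum over the two blocks of the partition $\{1,2\}$. I would invoke Corollary \ref{4th_moment}: it gives a decomposition $(B_2^n+\sqrt p B_1^n)^2=\bigcup_{l\le N}((x_l,y_l)+T_l)$ with $N\le\exp(Lp)$ and, on each piece, the chaos-type supremum controlled by $\tfrac{L}{\sqrt p}\|A\|_{\{1,2,3\}}\max_k\|Z_k\|_4$. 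Because $Z_k$ has log-concave tails normalized by \eqref{norm}, $\|Z_k\|_4\le L$, so each piece contributes $\tfrac{L}{\sqrt p}\|A\|_{\{1,2,3\}}$. To pass from the pieces $T_l$ back to all of $T$ I would use Lemma \ref{union_of_sets} (applied conditionally in the row-index variable, or directly to the empirical process indexed by $T$): the $\log N\le Lp$ term in that lemma is paid for by the shift quantity $\sup\{\sum(t_i-s_i)x_i\colon \sum\hat N_i(x_i)\le\log m\}$, which by definition of the $\|\cdot\|^{\cal N}$-norms is absorbed into $\|A\|_{\{1,2\}\{3\},p}^{\cal N}$. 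The centered versions of the supremum on each $(x_l,y_l)+T_l$ are handled by the shift bound \eqref{shifted_supremum}-type estimates already built into Corollary \ref{4th_moment} (inherited from Lemma \ref{first_decomposition_lemma}), whose $L\tilde\alpha$ cost is again of the order $\|A\|_{\{1,2\}\{3\},p}^{\cal N}$.

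\textbf{Assembling the bound.}
Collecting the three contributions — the within-piece term $\tfrac{L}{\sqrt p}\|A\|_{\{1,2,3\}}$, which after accounting for the $p^{1/2}$ scaling built into the definition of $\|A\|_{\{1,2,3\},p}^{\cal N}$ (recall $\|\cdot\|_{{\cal J},p}^{\cal N}\sim_d p^{k/2}\|\cdot\|_{\cal J}$ in the Gaussian normalization, here with $k=1$ for the lumped partition $\{1,2,3\}$) becomes comparable to $\|A\|_{\{1,2,3\},p}^{\cal N}$; the union-bound shift term; and the centering shift term — all of them are dominated by $L(\|A\|_{\{1,2,3\},p}^{\cal N}+\|A\|_{\{1,2\}\{3\},p}^{\cal N})$, which is exactly the asserted bound.

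\textbf{The main obstacle.}
The delicate point is the bookkeeping between the deterministic norms $\|A\|_{{\cal J},p}^{\cal N}$ and the geometric quantities $\|A\|_{\{1,2,3\}}$, $s_2^T(A)$, $\Delta_A(T)$ appearing in Corollary \ref{4th_moment} and Proposition \ref{crucial}. The genuinely log-concave (non-Gaussian) nature of the $Z_k$ enters only through $\|Z_k\|_4\le L$, but matching the $\ell_2$-type bound $\|A\|_{\{1,2,3\}}$ coming from Section \ref{partition_section} against the $\hat N$-constrained supremum that defines $\|A\|_{\{1,2,3\},p}^{\cal N}$ requires carefully tracking the $\sqrt p$ factors and verifying that the ball $B_2^n+\sqrt p B_1^n$ really does capture the constraint set $T$ up to constants. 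Establishing that containment cleanly, and checking that the shift/union-bound overheads are subsumed by $\|A\|_{\{1,2\}\{3\},p}^{\cal N}$ rather than producing a spurious larger norm, is where the real work lies.
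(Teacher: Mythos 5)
There is a genuine gap, and it lies at the very first step: you have misidentified the index set of the supremum. In the norm $\|\cdot\|_{\{1,2\},p}^{\cal N'}$ the two coordinates are lumped into a \emph{single} block of the partition, so the test objects are arbitrary matrices $(x_{ij})$ subject to $\sum_i \hat N_i^1\big((\sum_j x_{ij}^2)^{1/2}\big)\le p$ (plus the symmetric column-constrained term) --- not rank-one tensors $x\otimes y=(x_iy_j)$ with separate constraints on $x$ and $y$. Rank-one test objects correspond to the partition $\{1\}\{2\}$, i.e.\ to Lemma \ref{main_lemma}, and the machinery you invoke --- Corollary \ref{4th_moment}, the decomposition of $(B_2^n+\sqrt p B_1^n)^2\subset\R^n\times\R^n$ into $\exp(Lp)$ shifted pieces, Lemma \ref{union_of_sets} to pay for the union --- is precisely the paper's proof of \emph{that} lemma, not of this one. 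It cannot be applied here: the constraint set $T$ lives in $\R^{n^2}$, its extreme points are generically full-rank matrices (when all $\hat N_i^1(t)=t^2$ the body is just $\sqrt p$ times the Euclidean ball of $\R^{n^2}$), and the supremum over $T$ in general strictly dominates the supremum over rank-one tensors. Equivalently, optimizing each row separately shows the quantity equals $\E\sup\big\{\sum_i t_i\big(\sum_j(\sum_k a_{ijk}Z_k)^2\big)^{1/2}\colon \sum_i\hat N_i^1(t_i)\le p\big\}$, where the optimal direction of the $i$-th row is the \emph{random} vector $(\sum_k a_{ijk}Z_k)_j$ and varies with $i$; no covering by pairs $(x,y)$ of vectors can capture this. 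So the embedding of $T$ into $(B_2^n+\sqrt p B_1^n)^2$, which you flag as a preliminary technical step, is in fact impossible, and the core of your argument has no object to act on.

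The paper's actual proof is quite different in character and uses neither Proposition \ref{crucial} nor Corollary \ref{4th_moment}. Starting from the row-optimized form above, it splits the coefficient set $A_p=\{t\colon\sum_i\hat N_i^1(t_i)\le p\}$ into three pieces: small coefficients ($|t_i|\le 1$), sparse large coefficients ($|t_i|\ge l^3$ on the dyadic blocks $(2^lp,2^{l+1}p]$), and the intermediate range. The first piece is handled with $\E Z_k^2\le L$ together with the lower bound $\|A\|_{\{1,2,3\},p}^{\cal N}\ge L^{-1}\big(\sum_{i\le p}(\sum_{jk}a_{ijk}^2)^{1/2}+\sqrt p(\sum_{i>p}\sum_{jk}a_{ijk}^2)^{1/2}\big)$; the second by counting the at most $L^p$ admissible supports, taking finite nets, and applying Lemma \ref{union_of_sets} --- this is the only place the term $\|A\|_{\{1,2\}\{3\},p}^{\cal N}$ arises, via $\|A\|_{\{1,2\}\{3\},Lp}^{\cal N}\le L\|A\|_{\{1,2\}\{3\},p}^{\cal N}$; the third by comparing fourth and second moments of the random row norms and using a monotone rearrangement of $\sum_{jk}a_{ijk}^2$. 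The one element of your outline that survives is the idea of absorbing a union over $\exp(Lp)$ pieces through Lemma \ref{union_of_sets}; but the decomposition must be of the single-index set $A_p$, and the within-piece estimates must control the random quantities $\big(\sum_j(\sum_k a_{ijk}Z_k)^2\big)^{1/2}$, for which moment comparison, rather than the Gaussian chaos-process bound of Section \ref{Section_Gaussian}, is the relevant tool.
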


\begin{proof} By symmetry it is enough to prove that
\begin{align}\label{one_summand}
&\E\sup\Big\{ \sum_{ijk}a_{ijk}Z_kx_{ij}\colon \sum_{i}\hat{N}_i^1((\sum_j x_{ij}^2)^{1/2})\le p\Big\} \\
&\le L(\|A\|_{\{1,2,3\},p}^{\cal N} + \|A\|_{\{1,2\}\{3\},p}^{\cal N}).\nonumber
\end{align}
Moreover, we may and will assume that $\sum_{jk}a_{ijk}^2$ is decreasing in $i$.

Let us first notice that \begin{align}\label{estimation_from_below}
\|A\|_{\{1,2,3\},p}^{\cal N} \ge \frac{1}{L}(\sum_{i\le p}(\sum_{jk} a_{ijk}^2)^{1/2} + \sqrt{p}(\sum_{i> p} \sum_{jk}a_{ijk}^2)^{1/2}).
\end{align}

Let $A_p = \{t\in \R^n \colon \sum_{i} \hat{N}_i^1(t_i) \le p\}$ and note that
\begin{align*}
&\E\sup\{ \sum_{ijk}a_{ijk}Z_kx_{ij}\colon \sum_{i}\hat{N}_i^1((\sum_j x_{ij}^2)^{1/2})\le p\}\\
&= \E\sup\{ \sum_{i}t_i \sqrt{\sum_j (\sum_k a_{ijk}Z_k)^2}\colon t \in A_p\}.
\end{align*}

Define
\begin{align*}
A_p^{1} = \{t\in A_p \colon & |t_i| \le 1\},\\
A_p^{2} = \{t \in A_p \colon & \forall_{l \in \N, l\ge 1} \forall_i\; i \in (2^lp,2^{l+1}p] \Rightarrow (t_i = 0\; \textrm{or}\; |t_i| \ge l^3)\},\\
A_p^{3} = \{t \in A_p \colon & t_i = 0\;\textrm{for $i \le 2p$},\\
&\forall_{l \in \N, l\ge 1} \forall_i\; i \in (2^lp,2^{l+1}p] \Rightarrow (1 \le |t_i| \le l^3\;\textrm{or}\; t_i = 0)\}
\end{align*}
and for $m = 1,2,3$,
\begin{align*}
S_m := \E\sup\{ \sum_{i}t_i \sqrt{\sum_j (\sum_k a_{ijk}Z_k)^2}\colon t \in A_p^{m}\}.
\end{align*}

Since $A_p \subset A_p^{1} + A_p^{2} + A_p^{3}$, we have
\begin{align}\label{split_up}
\E\sup\{ \sum_{ijk}a_{ijk}Z_kx_{ij}\colon \sum_{i}\hat{N}_i^1((\sum_j x_{ij}^2)^{1/2})\le p\} \le S_1 + S_2 + S_3.
\end{align}

\paragraph{Step 1}
For $|t|\le 1$, $\hat{N}_i^1(t) = t^2$, so
\begin{align*}
S_1 &= \E \sup\{\sum_i t_i\sqrt{\sum_j(\sum_k a_{ijk}Z_k)^2}\colon \sum_i t_i^2 \le p, \forall_i \ |t_i|\le 1\}\\
&\le \E \sum_{i\le p} \sqrt{\sum_j(\sum_k a_{ijk}Z_k)^2} + \E \sqrt{p}(\sum_{i > p}\sum_j(\sum_k a_{ijk}Z_k)^2)^{1/2}\\
&\le L\Big(\sum_{i\le p}(\sum_{jk} a_{ijk}^2)^{1/2} + \sqrt{p}(\sum_{i> p} \sum_{jk}a_{ijk}^2)^{1/2}\Big),
\end{align*}
where in the second inequality we used the fact that $\E Z_n^2 \le L$. By (\ref{estimation_from_below}) this implies that
\begin{align}\label{step_1}
S_1 \le L\|A\|_{\{1,2,3\},p}^{\cal N}
\end{align}

\paragraph{Step 2} We will now estimate $S_2$. To this end let us note that since for $t \ge 1$, $\hat{N}_i^1(t) \ge |t|$,
for every $t \in A_p^{2}$, the set $I(t) = {\rm supp}\; t = \{i\le n\colon t_i \neq 0\}$, satisfies
\begin{align*}
\#I(t) \le 3p\;\textrm{and}\; \forall_{l\in \N, l\ge 1}\ \# (I(t) \cap (2^lp,2^{l+1}p]) \le p/l^3.
\end{align*}
Let us denote the family of subsets of $\{1,\ldots,n\}$ satisfying the above conditions by $\mathcal{I}$. We have
\begin{displaymath}
\#\mathcal{I} \le 2^{2p} \prod_{l\ge 1} \Big(\sum_{s\le  p/l^3}\binom{2^lp}{s}\Big) \le 2^{2p}\prod_{l\ge 1}\Big(\frac{e2^lp}{p/l^3}\Big)^{p/l^3} \le L^p.
\end{displaymath}

For each $I \in \mathcal{I}$ let $B_I = {\rm conv} \{t \in \R^n \colon {\rm supp}\; t \subset I, \sum_i \hat{N}_i^1(t_i) \le p\}$. Then
\begin{align*}
S_2 \le \E\max_{I\in \mathcal{I}} \sup_{t \in B_I} \sum_i t_i\sqrt{\sum_j(\sum_k a_{ijk}Z_k)^2}.
\end{align*}

For each $I \in \mathcal{I}$, the set $B_I$ admits a $1/2$-net ${\cal M}_I$ (with respect to the semi-norm induced by $B_I$) of cardinality at most
$5^{\#I} \le 5^{3p}$. By standard approximation arguments we have
\begin{displaymath}
\sup_{t \in B_I} \sum_i t_i\sqrt{\sum_j(\sum_k a_{ijk}Z_k)^2} \le 2\sup_{t \in {\cal M}_I} \sum_i t_i\sqrt{\sum_j(\sum_k a_{ijk}Z_k)^2}.
\end{displaymath}
Therefore
\begin{align*}
S_2 \le \E \sup_{t \in \bigcup_{I\in \mathcal{I}} {\cal M}_I} \sum_i t_i \sqrt{\sum_j(\sum_k a_{ijk} Z_k)^2},
\end{align*}
which by Lemma \ref{union_of_sets} is up to a universal constant majorized by
\begin{align*}
&\sup_{t \in \bigcup_{I\in \mathcal{I}} {\cal M}_I} \sum_i t_i \E \sqrt{\sum_j(\sum_k a_{ijk} Z_k)^2} \\
&+ \sup_{t \in \bigcup_{I\in \mathcal{I}} {\cal M}_I}\sup_{r\colon \sum_k\hat{N}_k^3(r_k) \le Lp} \sum_i t_i  \sqrt{\sum_j(\sum_k a_{ijk} r_k)^2}\\
\le& L\sup_{t\in A_p} t_i\sqrt{\sum_{jk} a_{ijk}^2} + \|A\|_{\{1,2\},\{3\},Lp}^{\cal N} \le L(\|A\|_{\{1,2,3\},p}^{\cal N} + \|A\|_{\{1,2\},\{3\},Lp}^{\cal N}).
\end{align*}
Since for $t \ge 1$, $\|A\|_{\{1,2\},\{3\},tp}^{\cal N} \le t^2\|A\|_{\{1,2\},\{3\},p}^{\cal N}$, the above inequality implies that
\begin{align}\label{step_2}
S_2 \le L(\|A\|_{\{1,2,3\},p}^{\cal N} + \|A\|_{\{1,2\},\{3\},p}^{\cal N}).
\end{align}
\paragraph{Step 3}
For $|t| \ge 1$, $\hat{N}_i^1(t) \ge t$, so
\begin{align*}
S_3 &\le \sum_{l\ge 1} \E\sup\{\sum_{2^lp<i\le 2^{l+1}p} t_i \sqrt{\sum_j (\sum_k a_{ijk}Z_k)^2}\colon \sum_i |t_i| \le p, |t_i| \le l^3 \}\\
& \le L\sum_{l\ge 1} \min(l^3,p) \E\max_{I \subset (2^lp,2^{l+1}p],\#I \le \lceil p/l^3\rceil} \sum_{i\in I} \sqrt{\sum_j (\sum_k a_{ijk}Z_k)^2} \\
&\le L\sum_{l\ge 1} \min(l^3,p) \lceil p/l^3\rceil^{3/4} \E \max_{I \subset (2^lp,2^{l+1}p],\#I \le \lceil p/l^3\rceil}
\Big(\sum_{i\in I} (\sum_j (\sum_k a_{ijk}Z_k)^2)^2\Big)^{1/4}\\
&\le L\sum_{l\ge 1} (pl)^{3/4}
\E \Big(\sum_{2^lp < i \le 2^{l+1}p} (\sum_j  (\sum_k a_{ijk}Z_k)^2)^2\Big)^{1/4}\\
&\le L\sum_{l\ge 1} (pl)^{3/4}
\Big(\sum_{2^lp < i \le 2^{l+1}p} \E (\sum_j  (\sum_k a_{ijk}Z_k)^2)^2\Big)^{1/4}\\
&\le L\sum_{l\ge 1} (pl)^{3/4}
\Big(\sum_{2^lp < i \le 2^{l+1}p} (\sum_{jk}a_{ijk}^2)^2\Big)^{1/4},
\end{align*}
where in the last inequality we used the comparison of the $4$-th and the second moment of norms of linear combinations of independent random variables with log-concave tails.

Now, denote $B = \sqrt{\sum_{i>p} \sum_{ij}a_{ijk}^2}$ and notice that by the assumption on monotonicity of $\sum_{jk}a_{ijk}^2$, we have for $i > p$
\begin{displaymath}
\sum_{jk} a_{ijk}^2 \le \frac{B^2}{i-p}.
\end{displaymath}
Therefore, we have
\begin{align*}
S_3 &\le L\sum_{l\ge 1}(pl)^{3/4} \Big(\sum_{i > 2^lp} \frac{B^4}{(i-p)^2}\Big)^{1/4} \le LB\sum_{l\ge 1}(pl)^{3/4} \frac{1}{(2^lp)^{1/4}}
\le L\sqrt{p}B,
\end{align*}
which by (\ref{estimation_from_below}) implies that
\begin{align}\label{step_3}
S_3 \le L\|A\|_{\{1,2,3\},p}^{\cal N}.
\end{align}
Inequalities (\ref{split_up}-\ref{step_3}) imply (\ref{one_summand}) and conclude the proof of the lemma.
\end{proof}

We will also need the following lemma, proven in \cite{L1} (Corollary 3, therein). We would like to remark in passing that the approach in \cite{L1} was different that in the present article and that the tools developed in the previous sections could be used to give another proof of this lemma
(in the spirit of the argument we provide below for Lemma \ref{main_lemma}). It seems a little bit more natural since Lemmas \ref{almost_main_lemma}
and Lemma \ref{main_lemma} play in the proof of Theorem \ref{upper} for $d=3$ a role analogous to role played by Lemma \ref{suprad1} in the proof of its counterpart for $d=2$.

\begin{lem}[Corollary 3 in \cite{L1}] \label{suprad1} Consider any matrix $A = (a_{ij})_{ij\le n}$ and let ${\cal N}_1 = (N_i^1)_{i\le n}$, ${\cal N}' = (N_{i}^j)_{i\le n,j\le 2}$.
Then, for any $p\geq 2$,
\[
\Ex\Big\|\Big(\sum_{j}a_{ij}Y_j\Big)\Big\|^{{\cal N}_1}_{\{1\},p} \leq L(\|A\|^{{\cal N}'}_{\{1,2\},p} + \|A\|^{{\cal N}'}_{\{1\}\{2\},p}).
\]
\end{lem}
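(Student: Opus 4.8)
The plan is to follow the proof of Lemma \ref{almost_main_lemma}, the key simplification being that here the relevant quantity is, for each fixed $t$, a \emph{linear} function of the variables $Y_j$, so that no absolute value or Euclidean norm intervenes. Writing $A_p=\{t\in\R^n\colon \sum_i \hat{N}_i^1(t_i)\le p\}$ and $W_i=\sum_j a_{ij}Y_j$, the left-hand side equals
\[
\Ex\sup_{t\in A_p}\sum_i t_i W_i=\Ex\sup_{t\in A_p}\sum_j\Big(\sum_i a_{ij}t_i\Big)Y_j ,
\]
a supremum of a centred linear process in the $Y_j$, indexed by the linear image of $A_p$. (Unlike in Lemma \ref{almost_main_lemma} there is no symmetrisation to perform, since $\|\cdot\|^{{\cal N}_1}_{\{1\},p}$ involves a single supremum.) We may assume $\sum_j a_{ij}^2$ is nonincreasing in $i$, and, exactly as in (\ref{estimation_from_below}), testing the defining supremum of $\|A\|^{{\cal N}'}_{\{1,2\},p}$ with matrices supported on $\{i\le p\}$ and on $\{i>p\}$ yields
\[
\|A\|^{{\cal N}'}_{\{1,2\},p}\ge \frac1L\Big(\sum_{i\le p}\Big(\sum_j a_{ij}^2\Big)^{1/2}+\sqrt p\,\Big(\sum_{i>p}\sum_j a_{ij}^2\Big)^{1/2}\Big).
\]

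Next I would decompose $A_p\subset A_p^1+A_p^2+A_p^3$ into a part with all coordinates of modulus at most $1$, a part whose support is small (but whose coordinates may be large), and a part with moderate coordinates supported on the tail $\{i>2p\}$, precisely as in Lemma \ref{almost_main_lemma}, and bound the three resulting suprema $S_1,S_2,S_3$. For $S_1$ the constraint becomes $t\in\sqrt p\,B_2^n$, so pointwise in $Y$ one has $\sum_i t_iW_i\le\sum_{i\le p}|W_i|+\sqrt p(\sum_{i>p}W_i^2)^{1/2}$; taking expectations and using $\Ex Y_j^2\le L$ bounds $S_1$ by the right-hand side of the above lower bound, hence by $L\|A\|^{{\cal N}'}_{\{1,2\},p}$. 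For $S_3$ I would repeat Step 3 of Lemma \ref{almost_main_lemma} verbatim with $\sqrt{\sum_j(\sum_k a_{ijk}Z_k)^2}$ replaced by $|W_i|$: the cardinality/$\ell_1$ description of $A_p^3$, Hölder's inequality with exponent $4$, and the comparison of the fourth and second moments of the linear forms $W_i=\sum_j a_{ij}Y_j$ give $S_3\le L\sqrt p(\sum_{i>p}\sum_j a_{ij}^2)^{1/2}\le L\|A\|^{{\cal N}'}_{\{1,2\},p}$.

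The main term, and the place where linearity pays off, is $S_2$. The support constraints defining $A_p^2$ confine $t$, after passing to $\tfrac12$-nets of the convex hulls of the finitely many admissible supports, to a finite set $\Gamma\subset A_p$ with $\log\#\Gamma\le Lp$, by the same counting as in Step 2 of Lemma \ref{almost_main_lemma}. Since $\sum_i t_iW_i=\sum_j(\sum_i a_{ij}t_i)Y_j$ is linear in the mean-zero variables $Y_j$, I would apply Lemma \ref{union_of_sets} with the $Y_j$ as the underlying variables and with $\Gamma$ split into singletons: the averaged term $\max\Ex\sum_j(\sum_i a_{ij}t_i)Y_j$ then \emph{vanishes}, and only the deterministic term
\[
\sup\Big\{\sum_{ij}a_{ij}(t_i-t_i')y_j\colon t,t'\in\Gamma,\ \sum_j\hat{N}_j^2(y_j)\le Lp\Big\}
\]
survives. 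Writing $t-t'=2v$ with $v\in A_p$ (possible since $A_p$ is symmetric and convex) and rescaling $y$ by a factor $L$ via (\ref{no_abs}), this supremum is at most $L\|A\|^{{\cal N}'}_{\{1\}\{2\},p}$, so $S_2\le L\|A\|^{{\cal N}'}_{\{1\}\{2\},p}$. Adding the three bounds gives the lemma.

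The part I expect to require most care is $S_2$: the combinatorial bookkeeping needed to keep $\log\#\Gamma\le Lp$ (counting admissible supports and bounding the net cardinalities), and the clean vanishing of the averaged term — this is exactly the structural feature distinguishing the two-dimensional case from the three-dimensional one, where the nonnegative norms $\sqrt{\sum_j(\sum_k a_{ijk}Z_k)^2}$ have strictly positive mean and therefore additionally produce the full norm $\|A\|^{{\cal N}}_{\{1,2,3\}}$ in the analogue of $S_2$.
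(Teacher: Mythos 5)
Your argument is correct in substance, but note that it cannot coincide with ``the paper's proof'': the paper does not prove Lemma \ref{suprad1} at all --- it imports it as Corollary 3 of \cite{L1}, and the authors explicitly remark that the approach of \cite{L1} differs from the methods of the present article, suggesting that an internal proof could instead be run in the spirit of Lemma \ref{main_lemma}, i.e.\ via the partition machinery (Corollary \ref{4th_moment}, resting on the entropy estimates and Gaussian-process bounds of Sections \ref{Section_Gaussian}--\ref{partition_section}). You take a third route: you transplant the proof of Lemma \ref{almost_main_lemma} to the two-index setting --- the same decomposition $A_p\subset A_p^1+A_p^2+A_p^3$, the same support counting and volumetric nets in Step 2, Lemma \ref{union_of_sets}, and the comparison of fourth and second moments in Step 3, with $\sqrt{\smash[b]{\sum_j(\sum_k a_{ijk}Z_k)^2}}$ replaced by $|W_i|$ (or, as you rightly note, with no absolute values at all, since the decomposition of $A_p$ lets you bound a supremum of linear functionals directly). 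This is more elementary than the route the authors hint at, avoiding Sections \ref{Section_Gaussian}--\ref{partition_section} entirely, and your structural observation is exactly right: because the process is linear and centred in the $Y_j$, the singleton averages in Lemma \ref{union_of_sets} vanish, so $S_2$ produces only $\|A\|^{{\cal N}'}_{\{1\}\{2\},p}$, whereas in the three-index case the nonnegative conditional norms have positive mean and force the extra term $\|A\|^{\cal N}_{\{1,2,3\},p}$.

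One slip needs repair: $A_p$ is symmetric but in general \emph{not} convex, since $\hat N_i$ need not be convex (for symmetric exponential variables $\hat N_i(t)=\min(t^2,|t|)$; e.g.\ $(1.96,\,0.2)$ and $(0.2,\,1.96)$ lie in $A_2\subset\R^2$ while their midpoint $(1.08,\,1.08)$ does not). So you may not write $t-t'=2v$ with $v\in A_p$, and for the same reason $\Gamma\subset\bigcup_I B_I$ with $B_I=\mathrm{conv}\{t\colon \mathrm{supp}\,t\subset I,\ \sum_i\hat N_i^1(t_i)\le p\}$ need not be contained in $A_p$. The fix is immediate: for each fixed $y$ the map $t\mapsto\sum_{ij}a_{ij}t_iy_j$ is linear, so its supremum over the convex hull $B_I$ equals its supremum over the underlying set $A_p\cap\{\mathrm{supp}\,t\subset I\}\subset A_p$, and by the symmetry of $A_p$ the difference term of Lemma \ref{union_of_sets} is at most $2\sup\{\sum_{ij}a_{ij}t_iy_j\colon t\in A_p\}$; rescaling $y$ via (\ref{no_abs}) then gives $S_2\le L\|A\|^{{\cal N}'}_{\{1\}\{2\},p}$ as you claim. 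With this correction (which is implicitly also what makes Step 2 of Lemma \ref{almost_main_lemma} work, since there too the nets live in convex hulls rather than in $A_p$), your proof goes through.
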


\begin{lem}\label{small_set}
Let ${\cal N}' = (N_{i}^j)_{i\le n,j\le 2}$.
Then
\[
\Ex\Big\|\Big(\sum_{k\le p}a_{ijk}Z_k\Big)_{i,j}\Big\|^{\cal N'}_{\{1\}\{2\},p}\leq
L\|A\|^{\cal N}_{\{1\}\{2\}\{3\},p}.
\]
\end{lem}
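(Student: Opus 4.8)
The plan is to avoid any chaining altogether and instead exploit the restriction $k\le p$ through a homogeneity argument combined with a one-dimensional tail estimate for $\sum_{k\le p}\hat{N}_k^3(Z_k)$. First I would recast the left-hand side: for a vector $z=(z_k)$ supported on $\{1,\ldots,p\}$ put $b(z):=\big(\sum_{k\le p}a_{ijk}z_k\big)_{ij}$ and $\Phi(z):=\|b(z)\|_{\{1\}\{2\},p}^{{\cal N}'}$. Since $b$ is linear in $z$ and $\|\cdot\|_{\{1\}\{2\},p}^{{\cal N}'}$ is a seminorm on matrices, $\Phi$ is positively homogeneous, and the quantity to be bounded is exactly $\E\,\Phi(Z)$ with $Z=(Z_k)_{k\le p}$.

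The key deterministic observation is that $\Phi(z)\le D:=\|A\|_{\{1\}\{2\}\{3\},p}^{{\cal N}}$ whenever $\sum_{k\le p}\hat{N}_k^3(z_k)\le p$. Indeed, extending such a $z$ by zeros on the coordinates $k>p$ makes it an admissible competitor in the definition of $\|A\|_{\{1\}\{2\}\{3\},p}^{{\cal N}}$, and for fixed $z$ the supremum over the $x,y$ variables is precisely $\Phi(z)$. This is the one place where the hypothesis $k\le p$ is essential: it guarantees that killing the coordinates $k>p$ costs nothing and that the admissibility constraint $\sum_{k\le p}\hat{N}_k^3(z_k)\le p$ matches the $z$–constraint inside $D$. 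Combining this with homogeneity and inequality (\ref{no_abs}), which gives $\hat{N}_k^3(z_k/t)\le\hat{N}_k^3(z_k)/t$ for $t\ge1$, I obtain the scaling bound: if $\sum_{k\le p}\hat{N}_k^3(z_k)\le tp$ with $t\ge1$, then $z/t$ is admissible and hence $\Phi(z)=t\,\Phi(z/t)\le tD$.

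The last ingredient is a tail bound for $W:=\sum_{k\le p}\hat{N}_k^3(Z_k)$. Each summand is nonnegative, and by the normalization (\ref{norm}) of the log-concave tails one has $\Pr(\hat{N}_k^3(Z_k)\ge u)\le e^{1-u}$: for $|Z_k|\le1$ the value is at most $1$, while for $|Z_k|>1$ it equals $N_k^3(|Z_k|)$, whose law is stochastically dominated by a standard exponential. Thus $W$ is a sum of $p$ independent, uniformly sub-exponential variables with bounded means, and a routine moment-generating-function estimate yields $\Pr(W\ge 2^m p)\le e^{-c2^m p}$ for all $m$ beyond an absolute constant $m_0$. This sub-exponential concentration of $W$ is the only genuinely delicate point of the argument; everything else is bookkeeping.

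It then remains to integrate dyadically. With $E_0=\{W\le 2p\}$ and $E_m=\{2^m p<W\le 2^{m+1}p\}$ for $m\ge1$, the scaling bound gives $\Phi(Z)\le 2^{m+1}D$ on $E_m$, so that
\begin{equation*}
\E\,\Phi(Z)\le\sum_{m\ge0}2^{m+1}D\,\Pr(E_m)\le L\,2^{m_0}D+2D\sum_{m\ge m_0}2^{m}e^{-c2^m p}\le LD,
\end{equation*}
where the remaining series is $O(1)$ since $p\ge2$. This is the asserted estimate. I would emphasize that, in contrast to Lemma \ref{main_lemma}, no entropy or partition input is needed here, precisely because truncating the summation to $k\le p$ collapses the problem to the homogeneity of $\Phi$ plus the concentration of the scalar $W$.
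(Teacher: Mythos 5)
Your proof is correct, but it takes a genuinely different route from the paper's. The paper proves Lemma \ref{small_set} by duality and a net: it views $z\mapsto\|(\sum_{k\le p}a_{ijk}z_k)_{ij}\|^{\cal N'}_{\{1\}\{2\},p}$ as a norm on $\R^{\lfloor p\rfloor}$, picks a $1/2$-net $M$ of the dual unit ball of cardinality at most $3^{\lfloor p\rfloor}$ (volumetric argument), bounds the left-hand side by $2\,\E\sup_{u\in M}\sum_{k\le p}u_kZ_k$, and then invokes Lemma \ref{union_of_sets} with $\log\#M\le Lp$ --- so it leans on the Gluskin--Kwapie\'n--Lata{\l}a concentration machinery of Lemma \ref{tail_est_log_conc}. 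You instead bypass suprema of processes entirely: the deterministic inclusion $\{\sum_{k\le p}\hat N^3_k(z_k)\le p\}\subset\{\Phi\le D\}$, the homogeneity bound $\Phi(z)\le tD$ when $\sum_{k\le p}\hat N^3_k(z_k)\le tp$ (a correct use of (\ref{no_abs})), and sub-exponential concentration of the scalar $W=\sum_{k\le p}\hat N_k^3(Z_k)$. Your tail bound $\Pr(\hat N^3_k(Z_k)\ge u)\le e^{1-u}$ is right --- $N(|Z|)$ is stochastically dominated by a standard exponential directly from the definition $N(t)=-\ln\Pr(|Z|\ge t)$ together with the normalization (\ref{norm}), though one should check the case where $N$ jumps (atoms of $|Z_k|$); the argument survives this --- and since $W$ has $\lfloor p\rfloor$ summands, the Chernoff bound $\Pr(W\ge 2^mp)\le e^{-c2^mp}$ and the dyadic summation close the proof with universal constants. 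Both arguments exploit $k\le p$ in an essential and parallel way: for the paper it makes the net cardinality $e^{Lp}$ compatible with tails at level $u\sim p$, for you it makes $\E W\le Lp$ so that $W$ concentrates at scale $p$. What your route buys is self-containedness and extra generality: it needs no entropy or partition input, no vector-valued concentration, and not even symmetry or mean zero of the $Z_k$'s (only the tail function), whereas the paper's proof is shorter given that Lemma \ref{union_of_sets} is already on the table and fits the pattern used elsewhere in Section 7.
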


\begin{proof}
Consider the norm on $\R^{\lfloor p\rfloor}$ given by
\begin{align*}
\|(z_1,\ldots,z_k)\| = \Big\|\Big(\sum_{k\le p}a_{ijk}z_k\Big)_{i,j}\Big\|^{\cal N'}_{\{1\}\{2\},p}
\end{align*}
and let $K$ be the unit ball of the dual norm $\|\cdot\|_\ast$. Let $M$ be a $1/2$ net in $K$ (with respect to $\|\cdot\|_\ast$) of cardinality not larger than $3^{\lfloor p\rfloor}$ ($M$ exists by standard volumetric arguments). Then for all $z \in \R^{\lfloor p\rfloor}$,
\begin{displaymath}
\|z\| \le 2\sup_{u \in M} \sum_{k\le p} u_k z_k.
\end{displaymath}

Thus
\begin{displaymath}
\Ex\Big\|\Big(\sum_{k\le p}a_{ijk}Z_k\Big)_{i,j}\Big\|^{\cal N'}_{\{1\}\{2\},p}\leq 2 \E \sup_{u \in M} \sum_{k\le p} u_kZ_k,
\end{displaymath}
which by Lemma \ref{union_of_sets} does not exceed
\begin{align*}
&L\sup \{\sum_{k\le p} u_k z_k \colon u \in M-M, \sum_{k\le p} \hat{N}_k^{3}(z_k) \le p\} \le L\sup\{\|z\|\colon \sum_{k\le p} \hat{N}_k^{3}(z_k) \le p\}\\
& = L\|A\|^{\cal N}_{\{1\}\{2\}\{3\}}.
\end{align*}
\end{proof}

\begin{lem} \label{main_lemma} Let ${\cal N}' = (N_{i}^j)_{i\le n,j\le 2}$. Then for any $p \ge 2$,
\begin{align}\label{main_estimate}
&\E\|(\sum_{k} a_{ijk} Z_k)_{ij}\|_{\{1\}\{2\},p}^{\cal N'} \nonumber\\
&\le L \Big(\|A\|_{\{1,2,3\},p}^{\cal N}+ \|A\|_{\{1\}\{2,3\},p}^{\cal N} +
\|A\|_{\{2\}\{1,3\},p}^{\cal N} + \|A\|_{\{1\}\{2\}\{3\},p}^{\cal N}\Big).
\end{align}
\end{lem}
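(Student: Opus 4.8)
The plan is to bound the left-hand side by embedding the supremum defining the norm into the Gaussian decomposition machinery of Section~\ref{partition_section}, and then to read off the four right-hand side norms from the separate pieces that the decomposition produces. Writing out the definition,
\[
\Big\|\Big(\sum_k a_{ijk}Z_k\Big)_{ij}\Big\|_{\{1\}\{2\},p}^{\cal N'}=\sup\Big\{\sum_{ijk}a_{ijk}x_iy_jZ_k\colon \sum_i\hat N_i^1(x_i)\le p,\ \sum_j\hat N_j^2(y_j)\le p\Big\},
\]
and since $\{x\colon \sum_i\hat N_i^1(x_i)\le p\}\subset\sqrt p\,(B_2^n+\sqrt p B_1^n)$ (split each coordinate according to whether $|x_i|\le 1$), after rescaling $x,y$ by $\sqrt p$ the admissible set becomes a $T\subset(B_2^n+\sqrt pB_1^n)^2$ and the quantity to estimate is $p\,\E\sup_{(x,y)\in T}\sum_{ijk}a_{ijk}x_iy_jZ_k$. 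Before invoking the machinery I would order the indices $k$ so that $\sum_{ij}a_{ijk}^2$ is nonincreasing and peel off the $p$ largest $k$-slices: by the triangle inequality for the norm, the contribution of $(\sum_{k\le p}a_{ijk}Z_k)_{ij}$ is dispatched directly by Lemma~\ref{small_set}, which already yields $L\|A\|_{\{1\}\{2\}\{3\},p}^{\cal N}$. It then remains to treat the ``flat'' part $k>p$.

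For the flat part I would view $\sum_{ijk}a_{ijk}x_iy_jZ_k=\sum_k c_k(x,y)Z_k$ as a \emph{linear} process in $Z$ indexed by $(x,y)\in T$, apply Corollary~\ref{4th_moment} to split $T$ into $N\le e^{Lp}$ pieces $(x_l,y_l)+T_l$, and then invoke Lemma~\ref{union_of_sets} (with $\log N\le Lp$). This reduces the problem to two quantities: the max over $l$ of the shifted suprema $\E\sup_{(x,y)\in(x_l,y_l)+T_l}\sum a_{ijk}x_iy_jZ_k$, and the entropy/diameter term $\sup\{\sum_{ijk}a_{ijk}(x_iy_j-x_i'y_j')z_k\colon (x,y),(x',y')\in T,\ \sum_k\hat N_k^3(z_k)\le Lp\}$. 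Telescoping $x_iy_j-x_i'y_j'$ and rescaling, the entropy term produces $L\|A\|_{\{1\}\{2\}\{3\},p}^{\cal N}$.

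For the shifted suprema, expanding around the center $(x_l,y_l)$ and using $\E Z_k=0$ leaves the centered supremum $\E\sup_{T_l}\sum a_{ijk}u_iv_jZ_k$, controlled by Corollary~\ref{4th_moment}, together with two cross terms, of which the first is $\E\sup_u\sum_{ik}\big(\sum_j a_{ijk}(y_l)_j\big)u_iZ_k$ and the second its mirror image. Each cross term is bilinear in one spatial variable and in $Z$, so I would bound it by the $d=2$ estimate Lemma~\ref{suprad1} applied to the contracted matrix $b_{ik}=\sum_j a_{ijk}(y_l)_j$ (resp. $\sum_i a_{ijk}(x_l)_i$); keeping $(x_l,y_l)$ inside the genuine rescaled constraint set $\{y\colon\sum_j\hat N_j^2(y_j)\le p\}/\sqrt p$ rather than the full $\ell_1+\ell_2$ ball, the factors $\sqrt p$ cancel and one obtains exactly $L(\|A\|_{\{2\}\{1,3\},p}^{\cal N}+\|A\|_{\{1\}\{2,3\},p}^{\cal N}+\|A\|_{\{1\}\{2\}\{3\},p}^{\cal N})$.

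The step I expect to be the main obstacle is the bookkeeping of scales in the centered term. Corollary~\ref{4th_moment} only gives $\E\sup_{T_l}\sum a_{ijk}u_iv_jZ_k\le \frac{L}{\sqrt p}\|A\|_{\{1,2,3\}}$, so after the outer factor $p$ this contributes $L\sqrt p\,\|A\|_{\{1,2,3\}}$, which is \emph{not} in general dominated by $\|A\|_{\{1,2,3\},p}^{\cal N}$ — it overshoots already for a single large coefficient with very light (super-Gaussian) tails. This is exactly why the preliminary peeling of the $p$ largest $k$-slices via Lemma~\ref{small_set} is essential: on the remaining part the Frobenius mass is spread over more than $p$ coordinates of $k$, and there the one-sided estimate behind (\ref{estimation_from_below}), now applied in the variable $k$, gives $\sqrt p\,\|A^{\mathrm{small}}\|_{\{1,2,3\}}\le L\|A\|_{\{1,2,3\},p}^{\cal N}$. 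Making the shifts and rescalings line up so that the cross terms land on the true constraint sets, and tracking the monotone ordering of $\sum_{ij}a_{ijk}^2$ together with all the factors $\sqrt p$, are the delicate points that make the four terms appear with the correct scaling.
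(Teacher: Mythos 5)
Your proposal is correct and follows essentially the same route as the paper's proof: the same monotone rearrangement in $k$ and splitting into the slices $k\le p$ (handled by Lemma \ref{small_set}) and the flat part $k>p$, the same partition of the constraint set via Corollary \ref{4th_moment} combined with Lemma \ref{union_of_sets} (entropy term giving $\|A\|_{\{1\}\{2\}\{3\},p}^{\cal N}$), and the same treatment of the two cross terms by Lemma \ref{suprad1} applied to the contracted matrices $b_{ik}=\sum_j a_{ijk}y^l_j$ and $b_{jk}=\sum_i a_{ijk}x^l_i$. The only difference is presentational: the paper first proves the reduced estimate with $\sqrt{p}\,\|A\|_{\{1,2,3\}}$ for an arbitrary matrix and then performs the peeling, whereas you peel first; your observation that $\sqrt{p}\,\|A\|_{\{1,2,3\}}$ is not in general dominated by $\|A\|_{\{1,2,3\},p}^{\cal N}$ is precisely the reason the paper carries out this reduction.
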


\begin{proof}
Let us first notice that it's enough to prove the formally weaker estimate
\begin{align}\label{reduced_estimate}
&\E\|(\sum_{k} a_{ijk} Z_k)_{ij}\|_{\{1\},\{2\},p}^{\cal N'} \nonumber\\
&\le L \Big(\sqrt{p}\|A\|_{\{1,2,3\}} + \|A\|_{\{1\}\{2,3\},p}^{\cal N} +
\|A\|_{\{2\}\{1,3\},p}^{\cal N} + \|A\|_{\{1\}\{2\}\{3\},p}^{\cal N}\Big).
\end{align}

Indeed, suppose that the above inequality holds for all triple-indexed matrices, and assume additionaly (without loss of generality) that $\sum_{ij} a_{ijk}^2$ decreases in $k$.
We have
\begin{align*}
\E\|(\sum_{k} a_{ijk} Z_k)_{ij}\|_{\{1\},\{2\},p}^{\cal N'} \le \E\|(\sum_{k\le p} a_{ijk} Z_k)_{ij}\|_{\{1\},\{2\},p}^{\cal N'} + \E\|(\sum_{k>p} a_{ijk} Z_k)_{ij}\|_{\{1\},\{2\},p}^{\cal N'}.
\end{align*}
By Lemma \ref{small_set} we have
\begin{displaymath}
\E\|(\sum_{k\le p} a_{ijk} Z_k)_{ij}\|_{\{1\}\{2\},p}^{\cal N'} \le L\|A\|^{\cal N}_{\{1\}\{2\}\{3\},p}.
\end{displaymath}
Moreover, by our assumption
\begin{displaymath}
\E\|(\sum_{k> p} a_{ijk} Z_k)_{ij}\|_{\{1\}\{2\},p}^{\cal N'} \le L\sqrt{p}\Big(\sum_{k>p}\sum_{ij} a_{ijk}^2\Big)^{1/2}+ \sum_{{\cal J}\in P_d, {\cal J}\neq \{\{1,2,3\}\} \atop
{\cal J}\neq \{\{1,2\},\{3\}\}} \|A\|_{{\cal J},p}^{\cal N}.
\end{displaymath}
Monotonicity of $\sum_{ij}a_{ijk}^2$ implies that
\begin{displaymath}
\sqrt{p}\Big(\sum_{k>p}\sum_{ij} a_{ijk}^2\Big)^{1/2} \le \sup\{\sum_k t_k\sqrt{\sum_{ij}a_{ijk}^2}\colon \sum_k t_k^2 \le p, |t_k|\le 1\} \le \|A\|_{\{1,2,3\},p}^{\cal N},
\end{displaymath}
which together with the previous three inequalities proves (\ref{main_estimate}).

We will now prove (\ref{reduced_estimate}). To this end let us denote
$$A_p^j = \{t\in \R^n\colon \sum_{i=1}^n \hat{N}_i^j(t_i) \le p\},\; j=1,2,3.$$
Since $\hat{N}_i^j(t) \ge |t|$ for $t> 1$, it is easy to see that $A_p^j \subset \sqrt{p}B_2^n + pB_1^n$. Hence, by Corollary \ref{4th_moment} and the fact that $\E Z_k^4 \le L$, there exists a partition

\begin{displaymath}
A_p^1\times A_p^2 = \bigcup_{l\le N} ((x^l,y^l)+T_l),
\end{displaymath}
with $N \le \exp(Lp)$, $(x^l,y^l) \in A_p\times A_p$, such that
\begin{align}\label{decomposition}
\max_{l\le N}\E\sup_{(x,y) \in T_l} \sum_{ijk}a_{ijk}x_iy_jZ_k \le L\sqrt{p}\|A\|_{\{1,2,3\}}.
\end{align}

Now, by Lemma \ref{union_of_sets},
\begin{align*}
&\E\sup_{(x,y) \in A_p^1\times A_p^2} \sum_{ijk}a_{ijk}x_iy_jZ_k \\
&\le \max_{l \le N}\E\sup_{(x,y) \in (x^l,y^l) + T_l} \sum_{ijk}a_{ijk}x_iy_jZ_k + 2\sup_{(x,y)\in A_p^1\times A_p^2, z\in A_{Lp}^3} \sum_{ijk} a_{ijk} x_iy_jz_k \\
&\le \max_{l \le N}\E\sup_{(x,y) \in (x^l,y^l) + T_l} \sum_{ijk}a_{ijk}x_iy_jZ_k+ L\|A\|_{\{1\}\{2\}\{3\},p}^{\cal N},
\end{align*}
where in the second inequality we used the fact that $A_{Lp}^3 \subset LA_p^3$.

Thus it remains to estimate $\max_{l \le N}\E\sup_{(x,y) \in (x^l,y^l) + T_l}\sum_{ijk}a_{ijk}x_iy_jZ_k$.
Denote by $\pi_1(T),\pi_2(T)$ respectively projections of $T_l$ onto the first $n$ and the last $n$ coordinates and let ${\cal N}_j = (N_i^j)_{i\le n}$, $j=1,2$. We have
\begin{align*}
&\E\sup_{(x,y) \in (x^l,y^l) + T_l}\sum_{ijk}a_{ijk}x_iy_jZ_k \\
&\le \E\sup_{(x,y) \in T_l}\sum_{ijk}a_{ijk}x_iy_jZ_k + \E \sup_{x\in \pi_1(T)}\sum_{ijk}a_{ijk}x_iy^l_jZ_k + \E\sup_{y\in \pi_2(T)} \sum_{ijk}a_{ijk}x_i^ly_jZ_k\\
&\le L\sqrt{p}\|A\|_{\{1,2,3\}} + 2\|(\sum_{ik}a_{ijk}x_i^l Z_k)_j\|_{\{2\},p}^{{\cal N}_2} + 2\|(\sum_{jk}a_{ijk}y^l_jZ_k)_i\|_{\{1\},p}^{{\cal N}_1}\\
&\le L\sqrt{p}\|A\|_{\{1,2,3\}} + L\|A\|_{\{1\}\{2,3\},p}^{\cal N} + L\|A\|_{\{2\}\{1,3\},p}^{\cal N} + L\|A\|_{\{1\}\{2\}\{3\},p}^{{\cal N}},
\end{align*}
where the second inequality follows from (\ref{decomposition}) and the fact that $\pi_j(T_l) \subset LA^j_p$, and the third inequality from Lemma
\ref{suprad1} (applied to $Z_k$ instead of $Y_j$, which corresponds to an appropriate permutation of the array $N_i^j$) .
This proves (\ref{reduced_estimate}) and ends the proof of the lemma.
\end{proof}

\begin{proof}[Conclusion of the proof of Theorem \ref{upper}]
By lemmas \ref{almost_main_lemma} and \ref{main_lemma}, the right hand side of (\ref{inequality_after_induction})
does not exceed
\begin{displaymath}
\sum_{{\cal J}\in P_3}\|(a_{ijk})\|_{{\cal J},p}^{{\cal N}},
\end{displaymath}
which ends the proof.
\end{proof}

\section{Proof of Theorem \ref{exponential} \label{proof_section_exp}}
In this section we restrict our attention to the special case of symmetric exponential variables and consider polynomial chaoses of arbitrary order.
For exponential variables, the function $N_i^j(t) = t$, which allows us to replace quantities $\|(a_\ii)\|_{{\cal J},p}^{\cal N}$ by simpler quantities.

\begin{prop} If for all $i\le n, j \le d$, $N_{i}^j(t) = t$, then for every $\mathcal{J} = \{J_1,\ldots,J_k\} \in P_d$ and every $p \ge 2$,
\begin{align*}
&L_d^{-1}\sum_{I \in Q(\mathcal{J})} p^{\#I^c + (k - \#I^c)/2}\max_{\ii_{I^c}}\|(a_\bfi)_{\ii_I}\|_{S({\cal J},I)}\\
&\le \|(a_\bfi)\|_{\mathcal{J},p}^{\cal N}\le
L_d\sum_{I \in Q(\mathcal{J})} p^{\#I^c + (k - \#I^c)/2}\max_{\ii_{I^c}}\|(a_\bfi)_{\ii_I}\|_{S({\cal J},I)}
\end{align*}
where $Q(\mathcal{J}) = \{I \subset \{1,\ldots,d\}\colon \forall_{i \le k} \#(I^c\cap J_i) \le 1\}$ and $S({\cal J},I)$ is the partition of $I$ obtained from $\cal{J}$ by removing from the sets $J_i$ all the elements of $I^c$.
\end{prop}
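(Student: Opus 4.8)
The plan is to use the explicit form of $\hat N_i^j$ in the exponential case. When $N_i^j(t)=t$ one has, for $s\ge 0$, $\hat N(s)=\min(s^2,s)$, since $\hat N(s)=s^2\le s$ for $s\le 1$ and $\hat N(s)=s\le s^2$ for $s>1$. In particular the two pointwise bounds $\hat N(s)\le s^2$ and $\hat N(s)\le s$ hold simultaneously, and this is exactly what makes the constraint sets in the definition of $\|(a_\bfi)\|_{\mathcal{J},p}^{\cal N}$ decompose with no loss of constants.

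First I would fix a tuple $s_1\in J_1,\ldots,s_k\in J_k$ and study the inner supremum. For each block $l$ set $v_{i_{s_l}}=\|(x^l_{\bfi_{J_l}})_{\bfi_{J_l\setminus\{s_l\}}}\|_2$, so that the constraint reads $\sum_{i_{s_l}}\hat N(v_{i_{s_l}})\le p$; call the corresponding body $C_l$. I introduce the Euclidean ball $P_l=\{x^l:\sum_{i_{s_l}}v_{i_{s_l}}^2\le p\}$ (radius $\sqrt p$) and the mixed ball $Q_l=\{x^l:\sum_{i_{s_l}}v_{i_{s_l}}\le p\}$ ($\ell_1$ along $s_l$, $\ell_2$ on $J_l\setminus\{s_l\}$, radius $p$). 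The bound $\hat N(s)\le s^2$ gives $P_l\subseteq C_l$, the bound $\hat N(s)\le s$ gives $Q_l\subseteq C_l$, and splitting the coordinates of any $x^l\in C_l$ according to whether $v_{i_{s_l}}\le 1$ or $>1$ yields $C_l\subseteq P_l+Q_l$. Thus $P_l\cup Q_l\subseteq C_l\subseteq P_l+Q_l$.

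Next, writing $\Phi(x^1,\ldots,x^k)=\sum_\bfi a_\bfi\prod_l x^l_{\bfi_{J_l}}$, I would expand the joint supremum over $\prod_l C_l$ by multilinearity. The inclusion $C_l\subseteq P_l+Q_l$ lets me split $x^l=u^l+w^l$ and expand $\Phi$ into $2^k$ multilinear terms indexed by $\epsilon\in\{0,1\}^k$ (use $u^l$ where $\epsilon_l=0$, $w^l$ where $\epsilon_l=1$); a termwise supremum then bounds $\sup_{\prod C_l}\Phi$ by the sum of the pure suprema $T_\epsilon$, while $P_l,Q_l\subseteq C_l$ gives $T_\epsilon\le\sup_{\prod C_l}\Phi$. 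Since $2^k\le 2^d$, this shows $\sup_{\prod C_l}\Phi\sim_d\max_\epsilon T_\epsilon$. To evaluate a pure term, put $I^c=\{s_l:\epsilon_l=1\}$ (one index per $\ell_1$-block, so $I\in Q(\mathcal{J})$ automatically). As $\Phi$ is multilinear, its supremum over the product of scaled $\ell_2$-balls and mixed balls is attained at extreme points: each mixed ball concentrates on a single coordinate $i_{s_l}$, contributing a factor $p$ and a $\max$ over $i_{s_l}$, while each $\ell_2$-block contributes $\sqrt p$. Pulling out the scalars and passing to unit balls leaves precisely the norm $\|\cdot\|_{S(\mathcal{J},I)}$ attached to the partition of $I$ obtained by deleting $I^c$, giving $T_\epsilon=p^{\#I^c+(k-\#I^c)/2}\max_{\bfi_{I^c}}\|(a_\bfi)_{\bfi_I}\|_{S(\mathcal{J},I)}$.

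Finally I would assemble both inequalities. For the upper bound, summing the termwise estimate over all tuples $(s_l)$ and all $\epsilon$ and regrouping by the resulting $I\in Q(\mathcal{J})$ gives the majorization, the only multiplicity being the free choice of $s_l$ inside the $\ell_2$-blocks (at most $\prod_l\#J_l\le d^d$). For the lower bound, each summand $p^{\#I^c+(k-\#I^c)/2}\max_{\bfi_{I^c}}\|(a_\bfi)_{\bfi_I}\|_{S(\mathcal{J},I)}$ equals some $T_\epsilon$, which by $P_l,Q_l\subseteq C_l$ is at most $\|(a_\bfi)\|_{\mathcal{J},p}^{\cal N}$; summing the $\#Q(\mathcal{J})\le 2^d$ terms yields the reverse bound. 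Because the constraint decomposition is exact thanks to $\hat N(s)=\min(s^2,s)$, the analytic content is light; the main obstacle is the combinatorial bookkeeping — matching the double sum over $(s_l)$ and $\epsilon$ with the single sum over $I\in Q(\mathcal{J})$, checking that the surviving partition is exactly $S(\mathcal{J},I)$ and that the exponent of $p$ comes out as $\#I^c+(k-\#I^c)/2$, and justifying the simultaneous extreme-point reduction for all mixed balls via multilinearity.
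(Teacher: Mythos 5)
Your proposal is correct and follows essentially the same route as the paper: your sandwich $P_l\cup Q_l\subseteq C_l\subseteq P_l+Q_l$ is precisely the paper's inclusion $(\sqrt{p}B_2^n)\cup(pB_1^n)\subset\{z\colon\sum_i\min(|z_i|,z_i^2)\le p\}\subset\sqrt{p}B_2^n+pB_1^n$ lifted through the factorization $x_{\bfi}=z_{i_{s_l}}y_{\bfi}$, and your simultaneous $2^k$-term multilinear expansion with the extreme-point evaluation of each $T_\epsilon$ is exactly the ``iterative application'' of the paper's single-block inequality whose details the authors leave to the reader. The only difference is presentational (all blocks at once versus one block at a time), so there is nothing to flag.
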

\begin{proof} It is enough to prove that
\begin{align*}
&L^{-1}\Big(p\max_{i_1}\|(a_{\ii})_{i_2,\ldots,i_d}\|_{\{2,\ldots,d\}} + \sqrt{p} \|(a_\ii)\|_{\{1,\ldots,d\}}\Big) \\
&\le \sup\{\sum_\ii a_\ii x_\ii \colon \sum_{i_1}\min((\sum_{i_2,\ldots,i_d} x_\ii^2)^{1/2},\sum_{i_2,\ldots,i_d} x_\ii^2) \le p \}\\
&\le L\Big(p\max_{i_1}\|(a_{\ii})_{i_2,\ldots,i_d}\|_{\{2,\ldots,d\}} + \sqrt{p} \|(a_\ii)\|_{\{1,\ldots,d\}}\Big).
\end{align*}
The proposition follows easily by an iterative application of this inequality.

To prove the above inequality it suffices to notice that
\begin{align*}
&\{x_\ii \colon \sum_{i_1}\hat{N}_{i_1}^1((\sum_{i_2,\ldots,i_d} x_\ii^2)^{1/2})\le p\}\\
 =& \{z_{i_1}y_\ii\colon \sum_{i}\min(|z_i|,z_i^2) \le p, \forall_{i_1} \sum_{i_2,\ldots,i_d} y_{\ii}^2 \le 1\}
\end{align*}
and
\begin{displaymath}
(\sqrt{p}B_2^n)\cup (pB_1^n)\subset \{z \in \R^n \colon \sum_{i}\min(|z_i|,z_i^2)\le p\} \subset \sqrt{p}B_2^n + pB_1^n.
\end{displaymath}
We will leave the details to the reader.
\end{proof}
For a nonempty set $I$, let us denote by $P_I$ the set of all partitions of $I$ into pairwise disjoint, nonempty sets. In particular $P_{\{1,\ldots,d\}} = P_d$, $P_\emptyset = \{\emptyset\}$.

The above proposition yields the following
\begin{cor}
If for all $i\le n, j \le d$, $N_{i}^j(t) = t$, then for every $p \ge 2$,
\begin{align*}
&L_d^{-1} \sum_{I \subset \{1,\ldots,d\}} \sum_{{\cal J} \in P_I}p^{\#I^c + \#{\cal J}/2}\max_{\bfi_{I^c}}\|(a_\bfi)_{\ii_{I}}\|_{\cal J}\\
 &\le \sum_{{\cal J}\in P_d}\|(a_{\bfi})\|_{{\cal J},p}^{\cal N} \le L_d \sum_{I \subset \{1,\ldots,d\}} \sum_{{\cal J} \in P_I}p^{\#I^c + \#{\cal J}/2}\max_{\bfi_{I^c}}\|(a_\bfi)_{\ii_{I}}\|_{\cal J}.
\end{align*}
\end{cor}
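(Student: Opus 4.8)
The plan is to derive the corollary purely combinatorially from the preceding proposition, by summing it over all partitions of $\{1,\ldots,d\}$ and then reindexing. Since $\#P_d$ is a constant depending only on $d$ (the Bell number $B_d$), summing the two-sided estimate of the proposition over $\mathcal{J}\in P_d$ preserves the relation $\sim_d$, so that
\[
\sum_{\mathcal{J}\in P_d}\|(a_\bfi)\|_{\mathcal{J},p}^{\cal N}\ \sim_d\ \sum_{\mathcal{J}\in P_d}\ \sum_{I\in Q(\mathcal{J})} p^{\#I^c+(\#\mathcal{J}-\#I^c)/2}\max_{\ii_{I^c}}\big\|(a_\bfi)_{\ii_I}\big\|_{S(\mathcal{J},I)} .
\]
It therefore suffices to show that the double sum on the right is comparable, up to a constant depending only on $d$, to the right-hand side of the corollary, namely $\sum_{I\subset\{1,\ldots,d\}}\sum_{\mathcal{J}'\in P_I}p^{\#I^c+\#\mathcal{J}'/2}\max_{\bfi_{I^c}}\|(a_\bfi)_{\ii_I}\|_{\mathcal{J}'}$. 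Both sides are sums of nonnegative terms indexed by pairs, so the whole argument reduces to matching up indices.

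The combinatorial core is the following observation. For $\mathcal{J}=\{J_1,\ldots,J_k\}\in P_d$ and $I\in Q(\mathcal{J})$ every block $J_i$ meets $I^c$ in at most one point, so $S(\mathcal{J},I)=\{J_i\cap I:\ J_i\cap I\neq\emptyset\}$ is a genuine partition of $I$, and a block disappears after deleting $I^c$ precisely when it is a singleton contained in $I^c$. Hence, writing $s$ for the number of such singletons, $\#S(\mathcal{J},I)=\#\mathcal{J}-s$ with $0\le s\le \#I^c$, and in particular
\[
\#\mathcal{J}-\#I^c\ \le\ \#S(\mathcal{J},I).
\]
I would encode this by two maps. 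The projection $\Phi(\mathcal{J},I)=(I,S(\mathcal{J},I))$ sends each proposition index to a legitimate corollary index (as $S(\mathcal{J},I)\in P_I$), leaves the norm unchanged, and, by the displayed inequality together with $p\ge 1$, only increases the power of $p$; thus each proposition term is bounded above by its image corollary term. Conversely the section $\Psi(I,\mathcal{J}')=(\mathcal{J}'\cup\{\{m\}:m\in I^c\},\,I)$ adjoins the elements of $I^c$ as singletons; one checks directly that $\mathcal{J}'\cup\{\{m\}:m\in I^c\}\in P_d$, that $I$ lies in $Q$ of it, that $\Phi\circ\Psi=\mathrm{id}$, and that here $\#\mathcal{J}=\#\mathcal{J}'+\#I^c$, so the exponent $\#I^c+(\#\mathcal{J}-\#I^c)/2$ equals exactly $\#I^c+\#\mathcal{J}'/2$. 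Consequently $\Psi$ carries each corollary term to a proposition term of equal value.

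With these two maps the estimate is immediate in both directions. Since $\Psi$ is injective and value-preserving and all terms are nonnegative, the corollary's right-hand side equals the subsum of the proposition double sum over the image of $\Psi$, hence is at most the full double sum. For the reverse bound, $\Phi$ dominates each proposition term by a corollary term, and since the total number of proposition indices $(\mathcal{J},I)$ is at most $\#P_d\cdot 2^d$, a constant depending only on $d$, the fibres of $\Phi$ have bounded size and the double sum is at most $L_d$ times the corollary's right-hand side. Combining the two bounds yields the desired $\sim_d$, and hence the corollary. I expect the only delicate point to be the bookkeeping of the exponents: the naive guess $\#S(\mathcal{J},I)=\#\mathcal{J}-\#I^c$ is false (only the inequality above holds), so the clean value-preserving correspondence must be set up through the singleton-adjoining section $\Psi$ rather than through $\Phi$ alone, while everything else is routine verification.
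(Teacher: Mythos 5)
Your proof is correct and is exactly the combinatorial reindexing that the paper leaves implicit when it says the corollary follows from the preceding proposition: summing the proposition over $\mathcal{J}\in P_d$ and matching index pairs, with the upper bound via $p^{\#I^c+(\#\mathcal{J}-\#I^c)/2}\le p^{\#I^c+\#S(\mathcal{J},I)/2}$ and the lower bound via the value-preserving injection that adjoins the elements of $I^c$ as singleton blocks. You also correctly identified the one genuine subtlety --- that $\#S(\mathcal{J},I)=\#\mathcal{J}-\#I^c$ fails in general, so exact exponent matching must go through the singleton-adjoining section rather than the projection --- and your verification of it is sound.
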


From the above corollary and Theorem \ref{lower} it follows that to prove Theorem \ref{exponential} it is enough to demonstrate the following
\begin{prop} \label{exponential_1} If $(X_i^j)_{i\le n, j \le d}$ are independent symmetric exponential random variables, then for every $p \ge 2$,
\begin{align}\label{exponential_upper}
\Big\|\sum_{\bfi}a_{\bfi}X_{i_1}^{1}\cdots X_{i_d}^{d}\Big\|_p \le L_d \sum_{I \subset \{1,\ldots,d\}} \sum_{{\cal J} \in P_I}p^{\#I^c + \#{\cal J}/2}\max_{\bfi_{I^c}}\|(a_\bfi)_{\ii_{I}}\|_{\cal J}.
\end{align}
\end{prop}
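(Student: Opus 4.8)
The plan is to prove Proposition \ref{exponential_1} by induction on $d$, mirroring the structure already used for the lower bound in Theorem \ref{lower} and for the $d=3$ case in Section \ref{proof_section}, but now driven by the entropy and partition machinery of Sections \ref{Entropy_subsection}--\ref{partition_section}. The base case $d=1$ is exactly the case of linear combinations, which is covered by Lemma \ref{tail_est_log_conc} (equivalently the Gluskin--Kwapie\'n result), so I would assume the statement for all orders below $d$. The main conditioning step is to single out the last variable $X^d=(\xi^d_i)$ and write
\begin{displaymath}
\sum_{\bfi}a_{\bfi}X_{i_1}^{1}\cdots X_{i_d}^{d} = \sum_{i_d}\xi^d_{i_d}\Big(\sum_{\bfi_{\{d\}^c}}a_{\bfi}X_{i_1}^1\cdots X_{i_{d-1}}^{d-1}\Big),
\end{displaymath}
treating the inner sums as coefficients of a first-order chaos in $X^d$ and the whole expression as a chaos of order $d-1$ with vector-valued coefficients. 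Applying Lemma \ref{tail_est_log_conc} conditionally (in its vector-valued form for exponential variables) reduces the $p$-th moment to the $L_1$ norm of a supremum of an empirical process plus a deterministic term of the type $\sup\{\sum_{\bfi}a_{\bfi}x_{\bfi}\colon \sum_{i_d}\hat N^d_{i_d}(\|\cdot\|_2)\le p\}$, which is precisely one of the $\|\cdot\|_{\mathcal{J},p}^{\mathcal{N}}$ quantities reexpressed via the Proposition preceding this one.

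The heart of the argument is controlling the expected supremum $\E\sup$ over the constraint set $A_p^1\times\cdots\times A_p^d$, where each $A_p^j=\{t\colon\sum_i\hat N_i^j(t_i)\le p\}\subset\sqrt p B_2^n+pB_1^n$. Here I would use the partition theorem: Corollary \ref{4th_moment} (or rather its general-order analogue, proved by the same chaining-plus-partition scheme as Theorem \ref{universal_partition}) gives a decomposition of the index set into $\exp(Lp)$ pieces $T_l$ on which the empirical process has small diameter, so that $\max_l\E\sup_{T_l}$ contributes only a multiple of $\sqrt p\,\|A\|_{\{1,\ldots,d\}}$. Lemma \ref{union_of_sets} then reassembles the pieces: the ``max'' term is the controlled diameter contribution, and the combinatorial/union term produces $\sup\{\sum_{\bfi}a_{\bfi}\prod x^l\colon \sum\hat N_{i}(x_i)\le Lp\}$, which after lower-order induction and the shift terms $(x^l,y^l,\ldots)$ unfolds into exactly the sum over $I\subset\{1,\ldots,d\}$ and $\mathcal{J}\in P_I$ of the terms $p^{\#I^c+\#\mathcal{J}/2}\max_{\bfi_{I^c}}\|(a_{\bfi})_{\bfi_I}\|_{\mathcal{J}}$ on the right-hand side of \eqref{exponential_upper}. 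The exponential special structure $N_i^j(t)=t$ is what makes this clean: the constraint sets factor into $\ell_1$ and $\ell_2$ pieces, and the fourth moments of $\xi$ are comparable to second moments, so Corollary \ref{4th_moment} applies with $\max_k\|Z_k\|_4\le L$.

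Concretely, after the conditioning step I would split each shifted supremum over $(x^l,\ldots)+T_l$ into the diameter term $\E\sup_{T_l}$ plus terms where all but one factor is fixed at the shift point; the fixed-factor terms are lower-order chaoses (in the remaining free variables) with coefficients $(\sum a_{\bfi}\prod_{\text{fixed}}x^l)$, to which the induction hypothesis applies, producing the $I^c\neq\emptyset$ summands, while the fully free term $\E\sup_{T_l}$ is bounded by the partition theorem and gives the $I=\{1,\ldots,d\}$ contribution $\sqrt p\,\|A\|_{\{1,\ldots,d\}}$. Keeping track of the powers of $p$ is the routine but necessary bookkeeping: each fixed coordinate in $I^c$ costs a factor $p$ (from the $\ell_1$-type constraint $\hat N\ge|t|$) and each block of $\mathcal{J}$ costs a factor $\sqrt p$ (from the $\ell_2$-type Gaussian-like part), matching $p^{\#I^c+\#\mathcal{J}/2}$. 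The main obstacle I anticipate is establishing the general-$d$ partition theorem analogous to Theorem \ref{universal_partition}: the $d=3$ proof relied on Proposition \ref{crucial}, itself a delicate chaining argument specific to $d=3$, so for arbitrary $d$ one must either prove the corresponding bound on $\E\sup\sum a_{\bfi}x^1_{i_1}\cdots x^{d-1}_{i_{d-1}}g_{i_d}$ by induction or argue that, for exponential variables, the repeated application of the simpler entropy estimates (Corollaries \ref{estentrgen1} and \ref{entropy_B_1^n}) together with Gaussian comparison (Lemmas \ref{lem_gaussian_exp_comparison} and \ref{gaussian_concentration}) suffices. I expect the latter route to work precisely because the exponential tails linearize the constraints, avoiding the subtler mixed-norm geometry that blocks the general log-concave case for $d>3$.
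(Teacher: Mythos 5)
There is a genuine gap, and it sits exactly where you flag it yourself: the general-$d$ analogue of Proposition \ref{crucial} and Theorem \ref{universal_partition}. For $d=3$ that bound consumed all of Section \ref{Section_Gaussian} — the two decomposition lemmas \ref{first_decomposition_lemma} and \ref{second_decomposition_lemma} and the growth-function induction on $c_T(r,l)$ — and its unavailability for higher orders is precisely why Theorem \ref{upper} is stated only for $d\le 3$. Your sole argument for general $d$ is the expectation that repeated application of Corollaries \ref{estentrgen1} and \ref{entropy_B_1^n} ``suffices'' because exponential tails linearize the constraints; no mechanism is sketched, and a naive Dudley-type integration of those entropy bounds does not produce the required mixed terms with the right powers of $p$ (this is why even the $d=3$ case needed the shifted-set bookkeeping rather than plain chaining). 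The paper's actual proof of Proposition \ref{exponential_1} avoids the partition machinery of Sections \ref{Section_Gaussian}--\ref{partition_section} entirely. Instead it (i) replaces the maxima over $\bfi_{I^c}$ by sums, i.e.\ proves the intermediate inequality (\ref{no_max}), using Lemma \ref{sumy_na_maxima} to pass back to maxima — a step your sketch never addresses, although it is essential: the conditional induction yields sums of expectations $\E\|(\sum_{i_d}a_\bfi X^d_{i_d})_{\bfi_I}\|_{\cal J}^p$ over the fixed outer indices, and one cannot cheaply pull a maximum through the expectation; (ii) conditions on $X^d$ and applies the induction hypothesis, reducing everything to the random-coefficient norm bound of Lemma \ref{main_lemma_exp}; and (iii) proves that lemma by a route you do not anticipate at all: the distributional comparison $\xi_i\approx g_i\tilde g_i$ (Lemma \ref{comparison_lemma}), a conditional application of the known Gaussian chaos bound of \cite{L2} (Lemma \ref{Gaussian_averages_1}), then symmetrization, the contraction principle with the truncated-square functions $\varphi_{i_d}$, Slepian's lemma to dominate the process $\sum_{i_d}\tilde g_{i_d}g_{i_d}(\sum_{\bfi_{I_1}}(\cdots)^2)^{1/2}$ by a genuine decoupled Gaussian chaos, and finally a self-bounding absorption step in which the term $\varepsilon L_d\sum_{{\cal K}\in P_d}p^{(1+\#{\cal K}-d)/2}\E\|(a_\bfi g_{i_d})\|_{\cal K}$ is reabsorbed into the left-hand side for small $\varepsilon$.

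To be clear about what is right in your plan: the induction on $d$, the base case $d=1$ via Lemma \ref{tail_est_log_conc}, the conditioning on one coordinate, the reformulation of the $\|\cdot\|_{{\cal J},p}^{\cal N}$ quantities via the preceding proposition, and the heuristic accounting $p^{\#I^c+\#{\cal J}/2}$ (a factor $p$ per frozen coordinate from the $\ell_1$ part, $\sqrt p$ per block from the $\ell_2$ part) all match the paper. But the engine that makes the induction close — bounding the expected norms with random exponential coefficients — is missing from your proposal: you offer neither the general-$d$ chaining bound (which nobody has) nor the paper's substitute (Gaussian-product representation plus Slepian, contraction and absorption). As written, the heart of the proof is a conjecture, not an argument.
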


The proof of Proposition \ref{exponential_1} will be based on induction with respect to $d$. It will require several additional lemmas. Throughout the rest of this section we will assume that $(X_i^j)_{i\le n, j \le d}$ are independent symmetric exponential random variables.

\begin{lem}\label{main_lemma_exp} For any $d= 2,3,\ldots$,
\begin{align*}
&\E\|(\sum_{i_d} a_\bfi X_{i_d}^d)_{\ii_{\{d\}^c}}\|_{\{1\}\ldots\{d-1\}} \\
&\le L_d\sum_{{\cal J} \in P_d}p^{(1+\#{\cal J} - d)/2}\|(a_\bfi)\|_{\cal J} + L_d\sum_{{\cal J} \in P_{d-1}}p^{1+(1+\#{\cal J} - d)/2}\max_{i_d}\|(a_\bfi)_{\bfi_{\{d\}^c}}\|_{\cal J}.\nonumber
\end{align*}
\end{lem}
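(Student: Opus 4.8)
The plan is to rewrite the left-hand side as the expected supremum of an empirical process and then run the iterated-partition argument of Proposition \ref{crucial}, adapted from two to \(d-1\) Euclidean factors and from Gaussian to exponential weights. Write \(T=(B_2^n)^{d-1}\) and, for \(x=(x^1,\dots,x^{d-1})\in T\), set
\[
c_{i_d}(x)=\sum_{\ii_{\{d\}^c}}a_{\bfi}\prod_{l=1}^{d-1}x_{i_l}^l,\qquad Z_x=\sum_{i_d}X_{i_d}^d\,c_{i_d}(x),
\]
so the quantity to estimate is \(\E_d\sup_{x\in T}Z_x\). Since \(N_i^j(t)=t\) gives \(\hat N_{i_d}^d(y)=\min(y^2,|y|)\), the level set \(\{\sum_{i_d}\hat N_{i_d}^d(y)\le u\}\) is comparable to \(\sqrt u\,B_2^n+u\,B_1^n\); hence the oscillation term furnished by Lemma \ref{union_of_sets} is governed by the two pseudometrics \(d_2(x,s)=\|c(x)-c(s)\|_2\) and \(d_\infty(x,s)=\|c(x)-c(s)\|_\infty\), contributing \(\sqrt u\,d_2+u\,d_\infty\). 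A direct computation gives \(\sup_{x\in T}\|c(x)\|_2=\|(a_\bfi)\|_{\{1\}\cdots\{d\}}\) and \(\sup_{x\in T}\|c(x)\|_\infty=\max_{i_d}\|(a_\bfi)_{\ii_{\{d\}^c}}\|_{\{1\}\cdots\{d-1\}}\); these are exactly the two \emph{leading} terms on the right-hand side (the \(\#{\cal J}=d\) summand of the first sum and the \(\#{\cal J}=d-1\) summand of the second).

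Next I would set up the entropy bounds. The metrics \(d_2,d_\infty\) equal \(\rho_{\alpha_2},\rho_{\alpha_\infty}\) for the tensor norms \(\alpha_2(z)=(\sum_{i_d}(\sum_{\ii_{\{d\}^c}}a_\bfi z_{\ii_{\{d\}^c}})^2)^{1/2}\) and \(\alpha_\infty(z)=\max_{i_d}|\sum_{\ii_{\{d\}^c}}a_\bfi z_{\ii_{\{d\}^c}}|\) on \(\R^{n^{d-1}}\). Applying the Gaussian case of Corollary \ref{estentrgen1} (i.e.\ with \(a=0\), so that only the functionals \(W_I^T\) enter) I would bound the covering numbers of \(T\) in these metrics and identify the functionals with partition norms: by Jensen and orthogonality of the Gaussian coordinates, \(W_I^T(\alpha_2)\le L_d\|(a_\bfi)\|_{{\cal J}_2(I)}\) with \({\cal J}_2(I)=\{\{k\}:k\in I\}\cup\{(\{1,\dots,d-1\}\setminus I)\cup\{d\}\}\in P_d\), and likewise \(W_I^T(\alpha_\infty)\le L_d\max_{i_d}\|(a_\bfi)_{\ii_{\{d\}^c}}\|_{{\cal J}_\infty(I)}\) with \({\cal J}_\infty(I)=\{\{k\}:k\in I\}\cup\{\{1,\dots,d-1\}\setminus I\}\in P_{d-1}\). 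Thus, at scale \(t\), the \(d_2\)- and \(d_\infty\)-diameters achievable with \(\exp(L_d t^{-2})\) pieces are \(\sum_{k}t^k W_{(k)}^{(2)}\) and \(\sum_k t^k W_{(k)}^{(\infty)}\), whose coefficients are sums of the partition norms appearing on the right-hand side.

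With these ingredients I would mimic the recursion of Proposition \ref{crucial}: define a two-parameter quantity \(c_T(r,l)\), the supremum of \(\E\sup_{x\in S}Z_x\) over finite \(S\subset T\) with at most \(r\) points and with \(d_2\)- and \(d_\infty\)-diameters bounded by level-\(l\) thresholds; partition each such \(S\) into \(\exp(L_d 2^{2l}p)\) pieces of smaller diameter via the entropy bound above, and peel off one increment using Lemma \ref{union_of_sets}, obtaining \(c_T(r,l)\le c_T(r-1,l+1)+\mathrm{incr}_l\) with \(\mathrm{incr}_l\sim 2^l\sqrt p\,(d_2\text{-diam}_l)+2^{2l}p\,(d_\infty\text{-diam}_l)\). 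Iterating down to a single point (where \(c_T(1,\cdot)=0\)) and summing the geometric series in \(l\), while the full-diameter contribution of the first step is supplied by the two leading terms above, I expect each \(\|(a_\bfi)\|_{\cal J}\) (\({\cal J}\in P_d\), \(\#{\cal J}=k\)) to accumulate the weight \(p^{(1+k-d)/2}\) and each \(\max_{i_d}\|(a_\bfi)_{\ii_{\{d\}^c}}\|_{\cal J}\) (\({\cal J}\in P_{d-1}\)) the weight \(p^{1+(1+k-d)/2}\), as required.

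The main obstacle is precisely this exponent bookkeeping. Because \(T\) is a product of \(d-1\) balls, the diameter functional at scale \(t\) is a genuine polynomial \(\sum_{k=1}^{d-1}t^kW_{(k)}\), so different merge patterns dominate at different scales, and the single fixed norm \(\alpha_2\) only directly produces partitions with one merged block containing \(d\); the remaining, multiply-merged partitions of \(P_d\) must be shown to emerge by \emph{iterating} the recursion on the localized pieces, where the still-uncontracted coordinates are merged at finer levels. I would have to verify that this iteration reproduces all of \(P_d\) and \(P_{d-1}\) with the correct powers of \(p\), that every geometric series converges (the decay being supplied by the \(\#{\cal J}\ge2\) blocks, while the coarsest contributions are absorbed once at level \(l=0\), exactly as in the Remark following Proposition \ref{crucial}), and that no spurious divergent term appears. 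An alternative I would keep in reserve, sidestepping part of this, is an induction on \(d\): partition the coordinate-\(1\) ball into \(\exp(L_d p)\) pieces, apply the order-\((d-1)\) version of the lemma to the tensor contracted against each centre, and control the oscillation in \(x^1\) by Lemma \ref{union_of_sets}.
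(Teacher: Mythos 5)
Your plan breaks at the peeling step, and the obstruction is structural rather than the ``exponent bookkeeping'' you flag as the main obstacle. The recursion $c_T(r,l)\le c_T(r-1,l+1)+\mathrm{incr}_l$ in Proposition \ref{crucial} is only available because Lemma \ref{gaussian_concentration} puts the constant \emph{one} in front of $\max_{j}\E\sup_{T_j}$ --- a specifically Gaussian concentration phenomenon. Your process $Z_x=\sum_{i_d}X^d_{i_d}c_{i_d}(x)$ has exponential weights, so the only peeling tool the paper provides is Lemma \ref{union_of_sets}, whose conclusion reads $\E\sup_{T}\le L\big(\max_j\E\sup_{T_j}+\mathrm{osc}\big)$ with $L>1$ multiplying the maximum. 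Each of the $r-1$ levels of your recursion then multiplies the accumulated bound by $L$, and since $r=\#S$ is unbounded, iterating down to singletons produces a factor $L^{\#S}$ and the argument diverges. (Your reserve plan --- induction on $d$ with one partition per coordinate --- only incurs $L$ boundedly many times, but it leaves a self-referential term, the supremum over the full set contracted against the small piece, which again cannot be absorbed without a constant strictly under control; you have not closed that loop either.) The natural repair --- replace $X^d_{i_d}$ by $g_{i_d}\tilde g_{i_d}$, which is legitimate up to universal constants, and then work conditionally on $g$ so that the process is genuinely Gaussian in $\tilde g$ --- is precisely how the paper proceeds: Lemma \ref{comparison_lemma} followed by a conditional application of the full Gaussian chaos bound of \cite{L2} (Lemma \ref{Gaussian_averages_1}), which subsumes the entire multiscale chaining over $(B_2^n)^{d-1}$ that you propose to redo by hand.

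The second half of the paper's proof has no counterpart in your sketch, and it is where the second sum on the right-hand side actually comes from. After the conditional Gaussian step one is left with $\E\|(a_\bfi g_{i_d})\|_{\cal J}$ for partitions whose block contains $d$; the paper controls $\E\|(a_\bfi g_{i_d})\|_{\cal J}^2$ by symmetrization, a contraction with the truncated quadratics $\varphi_{i_d}$ (this is where $M=\max_{i_d}\|(a_\bfi)_{\bfi_{\{d\}^c}}\|_{\cal J'}$ enters), a Slepian comparison that linearizes the inner square root, a second conditional application of Lemma \ref{Gaussian_averages_1}, and finally an absorption step (the $\varepsilon$ AM--GM trick) because the unknown quantity $\sum_{\cal K}p^{(1+\#{\cal K}-d)/2}\E\|(a_\bfi g_{i_d})\|_{\cal K}$ appears on both sides. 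Your identification of the two leading terms, $\sqrt p\,\|(a_\bfi)\|_{\{1\}\cdots\{d\}}$ and $p\,\max_{i_d}\|(a_\bfi)_{\ii_{\{d\}^c}}\|_{\{1\}\cdots\{d-1\}}$, from the level set $\sqrt p\,B_2^n+p\,B_1^n$ is correct as far as it goes, but without a substitute for constant-one concentration the chaining you build on it cannot be completed, and no amount of exponent bookkeeping repairs that step.
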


We will need the following technical fact.

\begin{lem}\label{comparison_lemma}
Let $Y_i^{(1)}$ be independent standard symmetric exponential variables and $Y_i^{(2)} = g_i^2$, $Y_{i}^{(3)} = g_i \tilde{g}_i$, where $g_i,\tilde{g}_i$ are i.i.d $\mathcal{N}(0,1)$ variables and $\varepsilon_i$ - i.i.d Rademacher variables independent of $Y_i^{(j)}$. Then for any normed space $E$ and any vectors $v_1,\ldots,v_n\in E$
the quantities
\begin{displaymath}
\Ex\big\|\sum_i v_i \varepsilon_i Y_i^{(j)}\Big\|,\; j=1,2,3,
\end{displaymath}
are comparable up to universal multiplicative factors.
\end{lem}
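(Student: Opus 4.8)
The plan is to prove that the three random families $Y_i^{(1)}$ (symmetric exponential), $Y_i^{(2)} = g_i^2$, and $Y_i^{(3)} = g_i\tilde g_i$, each multiplied by independent Rademachers $\varepsilon_i$, produce comparable expected norms $\E\|\sum_i v_i\varepsilon_i Y_i^{(j)}\|$. The natural strategy is to exploit two structural features: first, that $\varepsilon_i Y_i^{(j)}$ is already symmetric in each case (so the extra $\varepsilon_i$ is in some sense redundant but harmless and lets us compare distributions freely), and second, that all three variables $\varepsilon_i Y_i^{(j)}$ are symmetric with log-concave or nearly log-concave tails and comparable moments. The cleanest route is to compare each of the three to a common reference via the contraction principle for symmetric variables.

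First I would record the tail and moment behaviour. The symmetric exponential $\xi = \varepsilon Y^{(1)}$ satisfies $\p(|\xi|\ge t) = e^{-t}$. For $Y^{(2)} = g^2$ we have $\varepsilon g^2$, whose absolute value $g^2$ has tail $\p(g^2 \ge t) = \p(|g|\ge \sqrt t) \asymp e^{-t/2}$ up to polynomial factors, i.e.\ exponential-type tails. For $Y^{(3)} = g\tilde g$, the product of two independent standard Gaussians, $|g\tilde g|$ also has tails of order $e^{-t}$ up to lower-order terms. Thus all three variables have the same order of tail decay (exponential) and in particular $\|\varepsilon_i Y_i^{(j)}\|_p \sim_{} \|\varepsilon_i Y_i^{(1)}\|_p$ for all $p\ge 1$, uniformly in $j$. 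The key technical tool is the following comparison principle: if $U_i, W_i$ are independent symmetric random variables with $\|U_i\|_p \le C\|W_i\|_p$ for a suitable range of $p$, or more robustly if the distribution of $|U_i|$ is dominated by a multiple of $|W_i|$ in the convex order after conditioning, then $\E\|\sum_i v_i U_i\| \le L\,\E\|\sum_i v_i W_i\|$. Concretely I would use the contraction principle in the form: for symmetric independent $(U_i)$ and $(W_i)$ with $U_i$ stochastically dominated by $W_i$ (in $|\cdot|$), one has the one-sided bound, and then invoke it twice in each direction.

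The cleanest implementation avoids delicate convex-order comparisons by routing everything through the symmetric exponential. For the direction comparing $Y^{(2)}$ or $Y^{(3)}$ to $Y^{(1)}$, I would use that $|g|^2$ and $|g\tilde g|$ can be written as mixtures allowing a pointwise domination $|Y_i^{(j)}| \le c\, |\xi_i| + (\text{bounded part})$ after splitting on whether $|g_i|\le 2$, exactly in the spirit of the proof of Lemma \ref{lem_gaussian_exp_comparison} in the excerpt. On the event where the Gaussian factors are bounded, $\varepsilon_i Y_i^{(j)}$ contributes a term controlled by $\E\|\sum_i v_i\varepsilon_i\|$, which by the contraction principle (since $\E|\xi_i|$ is a fixed positive constant) is bounded by $L\,\E\|\sum_i v_i\varepsilon_i|\xi_i|\|$; on the complementary event the variable is dominated by a multiple of $|\xi_i|$, and one applies the contraction principle conditionally. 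Combining the pieces and symmetrizing back yields $\E\|\sum_i v_i\varepsilon_i Y_i^{(j)}\| \le L\,\E\|\sum_i v_i\varepsilon_i\xi_i\|$. The reverse inequality follows by the same method with the roles reversed, using that $|\xi_i|$ is itself dominated (up to the bounded-part correction) by $|g_i\tilde g_i|$ and by $g_i^2$, which again have exponential tails with the right constant.

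The main obstacle I anticipate is establishing the pointwise (or distributional) domination $|\xi_i| \lesssim |g_i\tilde g_i|$ and $|\xi_i|\lesssim g_i^2$ cleanly enough to feed into the contraction principle, since unlike the one-sided comparison in Lemma \ref{lem_gaussian_exp_comparison} here we need bounds in \emph{both} directions and the bounded-part corrections must be absorbed on both sides. The resolution is that all three variables have finite positive second moment and matched exponential tails, so after the bounded/unbounded split each correction term is of the form $\E\|\sum_i v_i\varepsilon_i\|$ (a fixed Rademacher average), which is itself controlled by any of the three averages via $\E|Y_i^{(j)}| > 0$ and the contraction principle. Thus every correction is reabsorbed into the target quantity, and the two-sided comparison closes. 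The remaining steps—verifying the tail estimates for $g^2$ and $g\tilde g$ and applying the contraction principle conditionally—are routine.
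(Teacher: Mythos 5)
Your proposal is correct and takes essentially the same route as the paper: the paper's proof likewise reduces the lemma to the two-sided exponential tail bounds $\Pr(|Y_i^{(1)}|\ge t)=e^{-t}$, $L^{-1}e^{-Lt}\le \Pr(Y_i^{(2)}\ge t)\le e^{-t/2}$, $L^{-1}e^{-Lt}\le \Pr(|Y_i^{(3)}|\ge t)\le 2e^{-t/2}$, couples the variables via inverses of distribution functions so that $|Y_i^{(j)}|\le L(1+|Y_i^{(k)}|)$ pointwise, and then applies the contraction principle, absorbing the additive constant through $\E\|\sum_i v_i\varepsilon_i\|\le c^{-1}\E\|\sum_i v_i\varepsilon_i |Y_i^{(k)}|\|$ (Jensen plus contraction), exactly your reabsorption step. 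Your bounded/unbounded split with the symmetric exponential as a hub is only a cosmetic variant of that quantile coupling.
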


\begin{proof}
Since we can symmetrize all variables, and by the contraction principle and Jensen's inequality
\begin{displaymath}
\Ex\Big\|\sum_i v_i \varepsilon_i |Y_i^{(j)}|\Big\| \ge c\Ex\Big\|\sum_i v_i\varepsilon_i\Big\|,
\end{displaymath}
it is enough to show that one can define copies of the variables $Y_i^{(j)}$ (which we will identify with the variables) on a common probability space in such a way that for any
$j,k = 1,2,3$,
\begin{displaymath}
|Y_i^{(j)}| \le L(1 + |Y_i^{(k)}|).
\end{displaymath}
This is possible by using the inverse of the distribution function, since
\begin{displaymath}
\Pr(|Y_i^{(1)}| \ge t) = e^{-t},
\end{displaymath}
\begin{displaymath}
L^{-1}e^{-Lt} \le \Pr(Y_i^{(2)} \ge t) \le e^{-t/2}
\end{displaymath}
and
\begin{displaymath}
L^{-1}e^{-Lt} \le \Pr(Y_i^{(3)} \ge t) \le 2e^{-t/2}
\end{displaymath}
\end{proof}

The proof of Lemma \ref{main_lemma_exp} will be based on a conditional application of the following result from \cite{L2} (see \cite{A} for a similar approach in the context of moment inequalities for $U$-statistics).

\begin{lem}[\cite{L2}, Theorem 2]\label{Gaussian_averages_1}
For any $p \ge 2$,
\begin{eqnarray*}
\E\|(\sum_{i_d}
a_{\ii}g_{i_d})_{\ii_{I_{d-1}}}\|_{\{1\}\dots\{d-1\}} &\le& L_d
\sum_{\mathcal{J} \in \mathcal{P}_{I_d}} p^{(1 + \#\mathcal{J}
- d)/2}\|(a_\ii)\|_\mathcal{J}.
\end{eqnarray*}
\end{lem}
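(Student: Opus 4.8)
\section*{Proof proposal for Lemma \ref{main_lemma_exp}}

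The plan is to replace the symmetric exponential weights $X_{i_d}^d$ by a product of two independent Gaussians, condition on one of the factors so that Lemma \ref{Gaussian_averages_1} applies to the other, and then integrate out the remaining factor in a way that separates a \emph{bulk} contribution (yielding the Gaussian family indexed by $P_d$) from a \emph{peak} contribution (yielding the $\max_{i_d}$ family indexed by $P_{d-1}$). Write $E$ for the normed space $(\R^{n^{d-1}},\|\cdot\|_{\{1\}\ldots\{d-1\}})$ and set $v_{i_d}=(a_\bfi)_{\ii_{\{d\}^c}}\in E$, so that the left-hand side is $\E\|\sum_{i_d}v_{i_d}X_{i_d}^d\|_E$. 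Since $X_{i_d}^d$ is symmetric exponential we may write $X_{i_d}^d=\varepsilon_{i_d}Y_{i_d}^{(1)}$, and Lemma \ref{comparison_lemma} gives $\E\|\sum_{i_d}v_{i_d}X_{i_d}^d\|_E\le L\,\E\|\sum_{i_d}v_{i_d}\varepsilon_{i_d}g_{i_d}\tilde g_{i_d}\|_E$; absorbing the factor $\varepsilon_{i_d}g_{i_d}$ into a single Gaussian $g_{i_d}$ (independent of $\tilde g$) this equals $L\,\E\|(\sum_{i_d}a_\bfi g_{i_d}\tilde g_{i_d})_{\ii_{\{d\}^c}}\|_{\{1\}\ldots\{d-1\}}$.

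First I would condition on $\tilde g$ and apply Lemma \ref{Gaussian_averages_1} to the array $(a_\bfi\tilde g_{i_d})$, obtaining
\[
\E\Big\|\Big(\sum_{i_d}a_\bfi g_{i_d}\tilde g_{i_d}\Big)_{\ii_{\{d\}^c}}\Big\|_{\{1\}\ldots\{d-1\}}\le L_d\sum_{\mathcal J\in P_d}p^{(1+\#\mathcal J-d)/2}\,\E_{\tilde g}\big\|(a_\bfi\tilde g_{i_d})\big\|_{\mathcal J}.
\]
Everything is thereby reduced to estimating, for each $\mathcal J\in P_d$, the expected $\mathcal J$-norm of the tensor whose $d$-th index has been weighted by an independent Gaussian. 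The guiding principle is that the coordinates of $\tilde g$ of size $O(1)$ make $(a_\bfi\tilde g_{i_d})$ behave, in the $\mathcal J$-norm, like $(a_\bfi)$ itself (producing $\|(a_\bfi)\|_{\mathcal J}$ and hence the first family), while the few large coordinates act as isolated spikes: by the triangle inequality over $i_d$ and the slice bound $\|(\sum_{i_d}a_\bfi c_{i_d})\|_{\mathcal J}\le\sum_{i_d}|c_{i_d}|\,\|(a_\bfi)_{\ii_{\{d\}^c}}\|_{\mathcal J^-}$, where $\mathcal J^-\in P_{d-1}$ is obtained from $\mathcal J$ by deleting $d$ from its block, they contribute $\max_{i_d}\|(a_\bfi)_{\ii_{\{d\}^c}}\|_{\mathcal J^-}$. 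Organising the large coordinates in dyadic magnitude scales running up to $\sqrt p$ (truncating $\tilde g$ at that level), and summing over $\mathcal J\in P_d$ while checking that each $\mathcal J^-$ appears with the power $p^{1+(1+\#\mathcal J^-- d)/2}$, should assemble exactly the two families on the right-hand side.

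The hard part will be proving the per-partition bound $\E_{\tilde g}\|(a_\bfi\tilde g_{i_d})\|_{\mathcal J}\lesssim_d \|(a_\bfi)\|_{\mathcal J}+\sqrt p\,\max_{i_d}\|(a_\bfi)_{\ii_{\{d\}^c}}\|_{\mathcal J^-}$ with constants independent of $n$, because the obvious estimates are all too lossy. Bounding the norm by Jensen's inequality, or a maximum by a sum, replaces the correct $\ell_\infty$-type quantities $\max_{i_d}\|(a_\bfi)_{\ii_{\{d\}^c}}\|_{\mathcal J^-}$ by $\ell_2$-sums $(\sum_{i_d}\|(a_\bfi)_{\ii_{\{d\}^c}}\|_{\mathcal J^-}^2)^{1/2}$, while a crude contraction on the truncated weights loses a spurious factor $\sqrt p$, and an unrestricted $\E_{\tilde g}$ introduces an $\sqrt{\log n}$ that is not absorbed by the $p$-dependent right-hand side for small $p$. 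What is genuinely needed is the \emph{sharp} Gaussian-supremum estimate for the process $\tilde g\mapsto\|(a_\bfi\tilde g_{i_d})\|_{\mathcal J}$, which I would obtain from the covering-number bounds of Corollaries \ref{estentrgen1} and \ref{entropy_B_1^n} together with the concentration inequality of Lemma \ref{gaussian_concentration}, exactly in the spirit of Section \ref{Section_Gaussian}. Reconciling this $\ell_2$-valued chaining with the $\ell_\infty$-geometry that is responsible for the peak terms, and extracting the latter cleanly at the truncation scale $\sqrt p$ uniformly in $n$ and across all $\mathcal J\in P_d$, is the principal technical obstacle of the argument.
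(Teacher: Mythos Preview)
Your opening reduction matches the paper: replace $X_{i_d}^d$ by $g_{i_d}\tilde g_{i_d}$ via Lemma~\ref{comparison_lemma}, condition on $\tilde g$, and apply Lemma~\ref{Gaussian_averages_1} to arrive at
\[
\E\Big\|\Big(\sum_{i_d}a_\bfi X_{i_d}^d\Big)\Big\|_{\{1\}\ldots\{d-1\}}
\le L_d\sum_{\mathcal J\in P_d}p^{(1+\#\mathcal J-d)/2}\,\E_{\tilde g}\|(a_\bfi\tilde g_{i_d})\|_{\mathcal J}.
\]
The divergence, and the genuine gap, is in your next step. You propose to prove the \emph{per-partition} estimate
\[
\E_{\tilde g}\|(a_\bfi\tilde g_{i_d})\|_{\mathcal J}\lesssim_d \|(a_\bfi)\|_{\mathcal J}+\sqrt p\,\max_{i_d}\|(a_\bfi)_{\ii_{\{d\}^c}}\|_{\mathcal J^-},
\]
but this inequality is false. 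Take $d=2$, $\mathcal J=\{\{1\},\{2\}\}$, and $a_{ij}=\delta_{ij}$ with $1\le i,j\le n$. Then $\|(a_\bfi)\|_{\mathcal J}=\|I\|_{\mathrm{op}}=1$, $\max_{i_d}\|(a_\bfi)_{\ii_{\{d\}^c}}\|_{\mathcal J^-}=1$, while $\|(a_\bfi\tilde g_{i_d})\|_{\mathcal J}=\max_j|\tilde g_j|$, whose expectation is of order $\sqrt{\log n}$. For any fixed $p$ this blows up with $n$, so no chaining or covering-number argument applied to this single $\mathcal J$ can rescue it. Your own caveat about the stray $\sqrt{\log n}$ was exactly on point; the point is that it cannot be removed at the level of an individual partition.

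What the paper does instead is an implicit, \emph{cross-partition} bootstrap. Writing $\mathcal J=\{I_1\cup\{d\},I_2,\ldots,I_k\}$, one squares $\|(a_\bfi g_{i_d})\|_{\mathcal J}$, centres by subtracting $\|(a_\bfi)\|_{\mathcal J}^2$, symmetrizes, and replaces $\varepsilon_{i_d}g_{i_d}^2$ by $g_{i_d}\tilde g_{i_d}$ via Lemma~\ref{comparison_lemma}. The key manoeuvre is then a Lipschitz contraction (with $\varphi_{i_d}(t)=t^2/(2M|g_{i_d}|)$ on $|t|\le M|g_{i_d}|$, where $M=\max_{i_d}\|(a_\bfi)_{\ii_{\{d\}^c}}\|_{\mathcal J^-}$) that factors out $M$, followed by Slepian's lemma to linearise the square root, and a \emph{second} application of Lemma~\ref{Gaussian_averages_1}. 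This yields
\[
\E\|(a_\bfi g_{i_d})\|_{\mathcal J}^2\le \|(a_\bfi)\|_{\mathcal J}^2+L_d\,M\sum_{\mathcal K\in P_d}p^{(1+\#\mathcal K-k)/2}\E\|(a_\bfi g_{i_d})\|_{\mathcal K},
\]
i.e.\ the bound for $\mathcal J$ feeds back into the \emph{whole family} $\{\E\|(a_\bfi g_{i_d})\|_{\mathcal K}\}_{\mathcal K\in P_d}$. Summing these inequalities (with the weights $p^{(1+\#\mathcal J-d)/2}$) and absorbing the right-hand side via an $\varepsilon$--argument closes the recursion. In the identity-matrix example, the dangerous $\sqrt{\log n}$ from $\mathcal J=\{\{1\},\{2\}\}$ is absorbed by the $\{\{1,2\}\}$-term, which is $\sqrt n$; this mixing is essential and is precisely what your per-partition scheme forbids.
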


\begin{proof}[Proof of Lemma \ref{main_lemma_exp}]
Lemmas \ref{comparison_lemma} and \ref{Gaussian_averages_1} give
\begin{align}\label{exponential_auxiliary_0}
\E\|(\sum_{i_d} a_\bfi X_{i_d}^d)\|_{\{1\}\ldots\{d-1\}} &\le L\E\|(\sum_{i_d} a_\bfi g_{i_d}\tilde{g}_{i_d})\|_{\{1\}\ldots\{d-1\}} \nonumber\\
&\le L_d\sum_{{\cal J}\in P_{d}}p^{(1+\#{\cal J}-d)/2}\E \|(a_\bfi g_{i_d})\|_{\cal J}.
\end{align}
Take ${\cal J} \in P_d$ of the form ${\cal J} = \{I_1\cup\{d\},\ldots, I_k\}$ where $\{I_1,\ldots,I_k\}\setminus\{\emptyset\} \in P_{d-1}$.
We have
\begin{align*}
\E\|(a_\bfi & g_{i_d})\|_{\cal J}^2 = \E \sup_{\|x_{\ii_{I_j}}^{j}\|_2\le 1, j=2,\ldots,k}\sum_{i_d}g_{i_d}^2\sum_{\bfi_{I_1}}\Big(\sum_{\ii_{(I_1\cup\{d\})^c}} a_\bfi\prod_{j=2}^k x_{\ii_{I_j}}^j\Big)^2 \\
&\le \|(a_\bfi)\|_{\cal J}^2 + \E \sup_{\|x_{\ii_{I_j}}^{j}\|_2\le 1, j=2,\ldots,k}\sum_{i_d}(g_{i_d}^2-1)\sum_{\bfi_{I_1}}\Big(\sum_{\ii_{(I_1\cup\{d\})^c}} a_\bfi\prod_{j=2}^k x_{\ii_{I_j}}^j\Big)^2.
\end{align*}
Since $\E g_{i_d}^2 = 1$, standard symmetrization arguments applied to the second term on the right hand side give
\begin{align}\label{exponential_auxiliary_1}
\E\|(a_\bfi g_{i_d})\|_{\cal J}^2 &\le \|(a_\bfi)\|_{\cal J}^2 + 2\E \sup_{\|x_{\ii_{I_j}}^{j}\|_2\le 1, j=2,\ldots,k}\sum_{i_d}\varepsilon_{i_d}g_{i_d}^2\sum_{\bfi_{I_1}}\Big(\sum_{\ii_{(I_1\cup\{d\})^c}} a_\bfi\prod_{j=2}^k x_{\ii_{I_j}}^j\Big)^2\\
&\le \|(a_\bfi)\|_{\cal J}^2 + L\E \sup_{\|x_{\ii_{I_j}}^{j}\|_2\le 1, j=2,\ldots,k}\sum_{i_d}g_{i_d}\tilde{g}_{i_d}\sum_{\bfi_{I_1}}\Big(\sum_{\ii_{(I_1\cup\{d\})^c}} a_\bfi\prod_{j=2}^k x_{\ii_{I_j}}^j\Big)^2,\nonumber
\end{align}
where in the second inequality we used again Lemma \ref{comparison_lemma}.
Let now
\begin{align*}
M = \max_{i_d} \sup_{\|x_{\ii_{I_j}}^{j}\|_2\le 1, j=2,\ldots,k} \sqrt{\sum_{\bfi_{I_1}} (\sum_{\ii_{(I_1\cup\{d\})^c}} a_\bfi\prod_{j=2}^k x_{\ii_{I_j}}^j)^2} = \max_{i_d} \|(a_\bfi)_{\bfi_{\{d\}^c}}\|_{I_1,\ldots,I_k}
\end{align*}
and for fixed $g_{i_d}$ consider functions $\varphi_k \colon \R \to \R$, given by the formula
\begin{displaymath}
\varphi_{i_d}(t) = \begin{cases}\frac{t^2}{2M g_{i_d}}\quad \textrm{for}\quad |t| \le |g_{i_d}| M,\\
g_{i_d}M/2 \quad \textrm{for}\quad |t| > |g_{i_d}| M.
\end{cases}
\end{displaymath}
We have $|\varphi_{i_d}'(t)| = |t|/(|g_{i_d}|M) \le 1$ for $|t| \le |g_{i_d}|M$, moreover $\varphi_{i_d}$ is constant for  $t\ge |g_{i_d}|M$, so $\varphi_{i_d}$ is $1$-Lipschitz. Thus, by the contraction principle (see Corollary 3.17 in \cite{LT}),
\begin{align}\label{contraction}
&\E_{\tilde{g}} \sup_{\|x_{\ii_{I_j}}^{j}\|_2\le 1, j=2,\ldots,k}\Big|\sum_{i_d}\tilde{g}_{i_d}\varphi_{i_d}\Big(g_{i_d}\Big(\sum_{\bfi_{I_1}} \Big(\sum_{\ii_{(I_1\cup\{d\})^c}} a_\bfi\prod_{j=2}^k x_{\ii_{I_j}}^j\Big)^2\Big)^{1/2}\Big)\Big|\\
&\le
4\E_{\tilde{g}}\sup_{\|x_{\ii_{I_j}}^{j}\|_2\le 1, j=2,\ldots,k}\Big|\sum_{i_d}\tilde{g}_{i_d} g_{i_d}\Big(\sum_{\bfi_{I_1}} \Big(\sum_{\ii_{(I_1\cup\{d\})^c}} a_\bfi\prod_{j=2}^k x_{\ii_{I_j}}^j\Big)^2\Big)^{1/2}\Big|,\nonumber
\end{align}
which implies that
\begin{align}\label{exponential_auxiliary_2}
&\E_{\tilde{g}} \sup_{\|x_{\ii_{I_j}}^{j}\|_2\le 1, j=2,\ldots,k}\Big|\sum_{i_d}\tilde{g}_{i_d}g_{i_d}\sum_{\bfi_{I_1}}\Big(\sum_{\ii_{(I_1\cup\{d\})^c}} a_\bfi\prod_{j=2}^k x_{\ii_{I_j}}^j\Big)^2\Big|\\
&\le 8M\E_{\tilde{g}} \sup_{\|x_{\ii_{I_j}}^{j}\|_2\le 1, j=2,\ldots,k}\Big|\sum_{i_d}\tilde{g}_{i_d} g_{i_d}\Big(\sum_{\bfi_{I_1}} \Big(\sum_{\ii_{(I_1\cup\{d\})^c}} a_\bfi\prod_{j=2}^k x_{\ii_{I_j}}^j\Big)^2\Big)^{1/2}\Big|.\nonumber
\end{align}

Denote $T = \prod_{j=2}^k B_{H_j}$, where $B_{H_j}$ is the unit ball of the Hilbert space $\bigotimes_{l \in I_j} \R^n$. For $t \in T$, $t = (x_{\bfi_{I_j}}^j)_{j=2}^k$ let
\begin{displaymath}
X_t = \sum_{i_d}\tilde{g}_{i_d} g_{i_d}\Big(\sum_{\bfi_{I_1}} (\sum_{\ii_{(I_1\cup\{d\})^c}} a_\bfi\prod_{j=2}^k x_{\ii_{I_j}}^j)^2\Big)^{1/2}.
\end{displaymath}
Then, conditionally on $g_{i_d}$, $(X_t)_{t\in T}$ is a Gaussian process. It induces a metric on $T$ given by
\begin{displaymath}
d_X(t,s) = \|X_t - X_s\|_2.
\end{displaymath}
More explicitly if $t = (x_{\bfi_{I_j}}^j)_{j=2}^k$, $s = (y_{\bfi_{I_j}}^j)_{j=2}^k$, then
\begin{align*}
&d_X(t,s)^2 \\
&= \sum_{i_d} g_{i_d}^2 \Big[\Big(\sum_{\bfi_{I_1}}\Big(\sum_{\ii_{(I_1\cup\{d\})^c}} a_\bfi\prod_{j=2}^k x_{\ii_{I_j}}^j\Big)^2\Big)^{1/2} - \Big(\sum_{\bfi_{I_1}}\Big(\sum_{\ii_{(I_1\cup\{d\})^c}} a_\bfi\prod_{j=2}^k y_{\ii_{I_j}}^j\Big)^2\Big)^{1/2}\Big]^2\\
&\le \sum_{i_d} g_{i_d}^2 \sum_{\bfi_{I_1}}\Big(\sum_{\ii_{(I_1\cup\{d\})^c}} a_\bfi\Big(\prod_{j=2}^k x_{\ii_{I_j}}^j - \prod_{j=2}^k y_{\ii_{I_j}}^j\Big)\Big)^2,
\end{align*}
where to obtain the last inequality for each fixed $i_d$ we used the triangle inequality in the space $\ell_2(\{1,\ldots,n\}^{I_1})$ for vectors $a_{\ii_{I_1}} = \sum_{\ii_{(I_1\cup\{d\})^c}} a_\bfi\prod_{j=2}^k x_{\ii_{I_j}}^j$ and $b_{\ii_{I_1}} = \sum_{\ii_{(I_1\cup\{d\})^c}} a_\bfi\prod_{j=2}^k y_{\ii_{I_j}}^j$.

Now, the right-hand side above is equal to $d_{\tilde{X}}(t,s) = \|\tilde{X}_t - \tilde{X}_s\|_2$, where $(\tilde{X}_t)_{t\in T}$ is a (conditionally) Gaussian process defined as
\begin{displaymath}
\tilde{X}_t = \sum_{\ii_{I_1\cup \{d\}}} g_{i_d} \tilde{g}_{\ii_{I_1\cup\{d\}}} \sum_{\ii_{(I_1\cup\{d\})^c}} a_\bfi\prod_{j=2}^k x_{\ii_{I_j}}^j
\end{displaymath}
where $t=(x_{\bfi_{I_j}}^j)_{j=2}^k$ and $(\tilde{g}_{\ii_{I_1\cup\{d\}}})_{\ii_{{I_1\cup\{d\}}}}$ is an array of i.i.d. standard Gaussian variables independent of $g_{i_d}$.

Thus by the Slepian lemma we have
\begin{displaymath}
\E_{\tilde{g}}\sup_{t \in T} X_{t} \le \E_{\tilde{g}}\sup_{t \in T} \tilde{X}_t.
\end{displaymath}
Moreover, since $0 \in T$, $X_0 = 0$ and $T$ is symmetric with respect to the origin, we have
\begin{displaymath}
\E\sup_{t\in T}|X_t| = \E\max (\sup_{t\in T} X_t,\sup_{t\in T}(-X_t)) \le \E \sup_{t\in T} X_t + \E\sup_{t \in T} (-X_t) = 2\sup_{t \in T} X_t.
\end{displaymath}
Thus, we have
\begin{align*}
&\E_{\tilde{g}} \sup_{\|x_{\ii_{I_j}}^{j}\|_2\le 1, j=2,\ldots,k}\Big|\sum_{i_d}\tilde{g}_{i_d} g_{i_d}\Big(\sum_{\bfi_{I_1}} \Big(\sum_{\ii_{(I_1\cup\{d\})^c}} a_\bfi\prod_{j=2}^k x_{\ii_{I_j}}^j\Big)^2\Big)^{1/2}\Big| \\
&\le  2\E_{\tilde{g}} \sup_{\|x_{\ii_{I_j}}^{j}\|_2\le 1, j=2,\ldots,k}\sum_{\ii_{I_1\cup \{d\}}}  \tilde{g}_{\ii_{I_1\cup\{d\}}} \sum_{\ii_{(I_1\cup\{d\})^c}} g_{i_d}a_\bfi\prod_{j=2}^k x_{\ii_{I_j}}^j\\
&\le L_d\sum_{{\cal K} \in P_d} p^{(1+\#{\cal K} - k)/2} \|(a_\bfi g_{i_d})\|_{\cal K},
\end{align*}
where the last inequality follows from another application of Lemma \ref{Gaussian_averages_1}, conditionally on $g_{i_d}$.
Going now back to (\ref{exponential_auxiliary_1}) and (\ref{exponential_auxiliary_2}), we obtain that for all $\varepsilon \in (0,1)$ and all $\mathcal{J} = \{I_1\cup\{d\},I_2,\ldots,I_k\} \in P_{d}$,
\begin{align*}
&p^{(1+k-d)/2} \E\|(a_\bfi g_{i_d})\|_{\cal J} \\
\le & p^{(1+k-d)/2}\|(a_\bfi)\|_{\cal J} \\
&+ L_d \sqrt{p^{(2+k-d )/2}\max_{i_d} \|(a_\bfi)_{\bfi_{\{d\}^c}}\|_{\cal J'}}\sqrt{\sum_{{\cal K} \in P_d}p^{(1+\#{\cal K} -d)/2}\E\|(a_{\bfi}g_{i_d})\|_{\cal K}}\\
\le& p^{(1+k-d)/2}\|(a_\bfi)\|_{\cal J} \\
&+L_d p^{1+ (1+\#{\cal J}'-d)/2}\varepsilon^{-1}\max_{i_d} \|(a_\bfi)_{\bfi_{\{d\}^c}}\|_{\cal J'} + \varepsilon L_d  \sum_{{\cal K} \in P_d}p^{(1+\#{\cal K} - d)/2}\E\|(a_{\bfi}g_{i_d})\|_{\cal K},
\end{align*}
where ${\cal J}' = \{I_1,\ldots,I_k\}\setminus\{\emptyset\}$.
Summing the above inequalities over all ${\cal J} \in P_d$ and choosing $\varepsilon$ to be a sufficiently small number depending on $d$, we get
\begin{align*}
\sum_{{\cal J} \in P_d} p^{(1+\#{\cal J} - d)/2}\E\|(a_\bfi g_{i_d})\|_{\cal J} &\le L_d\sum_{{\cal J} \in P_d} p^{(1+\#{\cal J} - d)/2}\|(a_\bfi)\|_{\cal J} \\
&+ L_d\sum_{{\cal J} \in P_{d-1}} p^{1+(1+{\cal J} - d)/2}\max_{i_d}\|(a_{\bfi})_{\ii_{\{d\}^c}}\|_{\cal J}.
\end{align*}
Together with (\ref{exponential_auxiliary_0}) this ends the proof of the lemma.
\end{proof}

To prove Proposition \ref{exponential_1} we will also use a technical fact proved in \cite{A} in greater generality (see Lemma 5 therein).
\begin{lem}\label{sumy_na_maxima}
For $\alpha > 0$ and arbitrary nonnegative numbers $r_{i_1,\ldots,i_d}$ and $p > 1$ we have
\begin{displaymath}
p^{\alpha p}\sum_{\ii} r_\ii^p \le L_d^p p^{\alpha
d}\left[p^{\alpha p}\max_{\ii} r_\ii^p + \sum_{I \subsetneq
\{1,\ldots,d\}} p^{\#I p} \max_{\ii_I}(\sum_{\ii_{I^c}}
 r_\ii)^p \right].
\end{displaymath}
\end{lem}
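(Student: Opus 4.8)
The plan is to observe that the stated inequality, which carries many nonnegative terms on its right-hand side, in fact follows from controlling the left-hand side by only \emph{two} of them: the full-index maximum $\max_\ii r_\ii^p$ (the summand sitting outside the sum, with weight $p^{\alpha p}$) and the summand corresponding to $I=\emptyset$, namely $\big(\sum_\ii r_\ii\big)^p$ (which carries weight $p^{0\cdot p}=1$ and is a legitimate term since $\emptyset\subsetneq\{1,\ldots,d\}$ for every $d\ge1$). As all the remaining summands are nonnegative, it suffices to prove
\[
p^{\alpha p}\sum_\ii r_\ii^p\le p^{\alpha}\Big[p^{\alpha p}\max_\ii r_\ii^p+\Big(\sum_\ii r_\ii\Big)^p\Big],
\]
since $p^{\alpha}\le p^{\alpha d}$ for $p>1$, $\alpha>0$, $d\ge1$; this even yields $L_d=1$.

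First I would reduce the full sum to a product of the maximum and the total mass. Writing $M=\max_\ii r_\ii$, the elementary pointwise bound $r_\ii^{p-1}\le M^{p-1}$ (valid since $p>1$ and $r_\ii\ge0$) gives
\[
\sum_\ii r_\ii^p=\sum_\ii r_\ii^{p-1}r_\ii\le M^{p-1}\sum_\ii r_\ii.
\]
Then I would split this product by the weighted AM--GM (Young) inequality with an adjustable parameter $a>0$,
\[
M^{p-1}\sum_\ii r_\ii\le\frac{p-1}{p}\,a\,M^p+\frac1p\,a^{-(p-1)}\Big(\sum_\ii r_\ii\Big)^p,
\]
and choose $a=p^{\alpha}$. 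Multiplying through by $p^{\alpha p}$ and simplifying the exponents — the power of $p$ in the second term collapses as $\alpha p-\alpha(p-1)=\alpha$ — produces exactly
\[
p^{\alpha p}\sum_\ii r_\ii^p\le\frac{p-1}{p}\,p^{\alpha}\big(p^{\alpha p}M^p\big)+\frac1p\,p^{\alpha}\Big(\sum_\ii r_\ii\Big)^p\le p^{\alpha}\Big[p^{\alpha p}\max_\ii r_\ii^p+\Big(\sum_\ii r_\ii\Big)^p\Big],
\]
which is the desired two-term estimate.

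There is essentially no hard step here: the only point requiring care is tuning the Young parameter to $a=p^\alpha$, so that the factor $p^{\alpha p}$ cancels correctly against the weight $p^{-\alpha(p-1)}$ attached to the $\ell_1$-type term, leaving a single spare power $p^\alpha$ that is then absorbed into $p^{\alpha d}$. I note that this argument in fact proves something strictly stronger than stated, since the intermediate partition terms $\max_{\ii_I}(\sum_{\ii_{I^c}}r_\ii)^p$ for $\emptyset\neq I\subsetneq\{1,\ldots,d\}$ are never used; this is consistent with the lemma being quoted from \cite{A} in a form adapted to its later applications, where those finer quantities are precisely the ones that turn out to be small. Should one instead prefer to mirror the inductive argument of \cite{A}, one can peel off a single coordinate and combine the $d=1$ instance of the displayed two-term bound with the induction hypothesis, but the direct computation above is shorter and entirely self-contained.
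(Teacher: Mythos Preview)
Your proof is correct. The paper does not actually supply a proof of this lemma; it merely quotes it from \cite{A} (Lemma~5 there, stated in greater generality), so there is no in-paper argument to compare against directly.

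Your approach is more elementary than the inductive argument alluded to in \cite{A}: you notice that among all the summands on the right-hand side, only the extreme two --- the full maximum term $p^{\alpha p}\max_\ii r_\ii^p$ and the $I=\emptyset$ term $(\sum_\ii r_\ii)^p$ --- are needed, and you recover the inequality from the single estimate $\sum_\ii r_\ii^p\le M^{p-1}\sum_\ii r_\ii$ followed by Young's inequality with the well-chosen weight $a=p^\alpha$. The exponent bookkeeping ($\alpha p-\alpha(p-1)=\alpha\le\alpha d$) is correct, and the resulting bound even has $L_d=1$. The original formulation in \cite{A} keeps the intermediate terms $\max_{\ii_I}(\sum_{\ii_{I^c}}r_\ii)^p$ because in its native setting those mixed quantities are the natural objects; but for the bare inequality as stated here, your two-term argument is both sufficient and sharper.
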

\begin{proof}[Proof of Proposition \ref{exponential_1}] The argument is similar to the proof of Theorem 6 in \cite{A} therefore we will only sketch the main steps.

Since for $p = 2$ the proposition is trivial (recall that $(\|(a_\ii)\|_{\{1,\ldots,d\}} = (\sum_{\ii} a_\ii^2)^{1/2}$), we will assume that $p > 2$.

Let us first note that to prove the proposition it is enough to show that
\begin{align}\label{no_max}
\E|\sum_{\bfi}a_{\bfi}X_{i_1}^{1}\cdots X_{i_d}^{d}\Big|^p \le L_d^p \sum_{I \subset \{1,\ldots,d\}} \sum_{{\cal J} \in P_I}p^{p(\#I^c + \#{\cal J}/2)}\sum_{\bfi_{I^c}}\|(a_\bfi)_{\ii_{I}}\|_{\cal J}^p.
\end{align}

Indeed, for fixed $I$ let us apply Lemma \ref{sumy_na_maxima} (with $p/2$ instead of $p$, $\#I^c$ instead of $d$ and $r_{\ii_{I^c}} = \|(a_\ii)_{\ii_I}\|_{\cal J}^2$). We get
\begin{align*}
&\sum_{\bfi_{I^c}}\|(a_\bfi)_{\ii_{I}}\|_{\cal J}^p \\
&\le L_{\# I^c}^p(p/2)^{\alpha \#I^c}\Big(\max_{\bfi_{I^c}}\|(a_\bfi)_{\ii_{I}}\|_{\cal J}^p
+\sum_{J\subsetneq I^c}(p/2)^{\#Jp/2 - \alpha p/2}\max_{\ii_{J}}(\sum_{\bfi_{I^c\setminus J}}\|(a_{\ii})_{\ii_{I}}\|_{\mathcal{J}}^2)^{p/2}\Big).
\end{align*}

Note that we have $\sum_{\ii_{I^c\setminus J}} \|(a_\ii)_{\ii_I}\|_{\cal J}^2 \le \sum_{\ii_{J^c}} a_\ii^2 = \|(a_\ii)_{\ii_{J^c}}\|_{\{J^c\}}^2$ and
that $p^{\#J + 1/2} \max_{\ii_J} \|(a_\ii)_{\ii_{J^c}}\|_{\{J^c\}}$ appears among the summands on the right hand side of (\ref{exponential_upper}). Thus the above inequality with $\alpha$ sufficiently large (depending only on $d$) implies that the right hand side of (\ref{no_max}) is majorized by the $p$-th power of the right hand side of the inequality asserted in the proposition (we use the fact that if $\alpha$ depends only on $d$ then $p^{\alpha\#I^c} \le L_d^p$).

It remains to prove (\ref{no_max}). We will proceed by induction on $d$. For $d=1$, the proposition (which is stronger than (\ref{no_max}) for $d=1$) is a special case of Theorem \ref{upper} (it also follows from the Gluskin-Kwapie{\'n} estimate).

Let us thus assume that (\ref{no_max})  holds for chaoses of order at most $d-1$. We will show that then it holds for chaoses of order $d$.
Applying the induction assumption conditionally on $(X_i^{d})_i$ together with the Fubini theorem and Lemma \ref{union_of_sets} we obtain
\begin{align*}
&\E|\sum_{\bfi}a_{\bfi}X_{i_1}^{1}\cdots X_{i_d}^{d}\Big|^p \\
\le& L_{d-1}^p \sum_{I \subset \{1,\ldots,d-1\}} \sum_{{\cal J} \in P_I}p^{p(d-1 - \#I + \#{\cal J}/2)}\sum_{\bfi_{\{1,\ldots,d-1\}\setminus I}}\E \|(\sum_{i_d} a_\bfi X_{i_d}^d)_{\ii_{I}}\|_{\cal J}^p\\
\le& L_d^p \sum_{I \subset \{1,\ldots,d-1\}} \sum_{{\cal J} \in P_I}p^{p(d-1 - \#I + \#{\cal J}/2)}\sum_{\bfi_{\{1,\ldots,d-1\}\setminus I}}(\E \|(\sum_{i_d} a_\bfi X_{i_d}^d)_{\ii_{I}}\|_{\cal J})^p\\
&+ L_d^p \sum_{I \subset \{1,\ldots,d\}} \sum_{{\cal J} \in P_I}p^{p(\#I^c + \#{\cal J}/2)}\sum_{\bfi_{I^c}}\|(a_\bfi)_{\ii_{I}}\|_{\cal J}^p.
\end{align*}
By Lemma \ref{main_lemma_exp} the first sum on the right hand side above is majorized by the second one, which proves (\ref{no_max}).
\end{proof}

\section{Appendix}
\begin{proof}[Proof of Proposition \ref{tetrahedral_prop}]
Note that
\begin{align*}
\sum_{j=0}^d\sum_{i_1,\ldots,i_j=1}^n a_{i_1,\ldots,i_j}^{j}X_{i_1}\cdots X_{i_j} = \sum_{1\le i_1,\ldots,i_d  \le n \atop \textrm{pairwise distinct}}
H_{i_1,\ldots,i_d}(X_{i_1},\ldots,X_{i_d}),
\end{align*}
where
\begin{displaymath}
H_{i_1,\ldots,i_d}(x_1,\ldots,x_d) = \frac{1}{d!}\sum_{j=0}^d \frac{(n-d)!}{(n-j)!}\sum_{\pi \in S_d} a_{i_{\pi(1)},\ldots,i_{\pi(j)}}^{j}x_{\pi(1)}\cdots x_{\pi(j)}
\end{displaymath}
and $S_d$ denotes the set of all permutations of the set $\{1,\ldots,d\}$. Note that
for every $\pi \in S_d$, $h_{i_{\pi(1)}\ldots i_{\pi(d)}}(x_{\pi(1)},\ldots,x_{\pi(d)}) =
h_{i_1,\ldots,i_d}(x_1,\ldots,x_d)$.
Therefore by general decoupling inequalities for $U$-statistics (see \cite{dlP} or Theorem 3.1.1. in \cite{dlPG}),
we have
\begin{displaymath}
L_d\Big\|\sum_{j=0}^d\sum_{i_1,\ldots,i_j=1}^n a_{i_1,\ldots,i_j}^{j}X_{i_1}\cdots X_{i_j}\Big\|_p \ge
\Big\|\sum_{1\le i_1,\ldots,i_d  \le n \atop \textrm{pairwise distinct}}
H_{i_1,\ldots,i_d}(X_{i_1}^{1},\ldots,X_{i_d}^{d})\Big\|_p.
\end{displaymath}
The right hand side of the above inequality is equal to
\begin{displaymath}
\Big\|\sum_{j=0}^d \frac{1}{\binom{d}{j}}\sum_{1 \le r_1 < \ldots < r_j\le d}\sum_{i_1,\ldots,i_j=1}^na_{i_1,\ldots,i_j}^{j}X_{i_1}^{r_1}\cdots X_{i_j}^{r_j}\Big\|_p
\end{displaymath}
(we used the symmetry of the coefficients $a_{i_1,\ldots,i_j}^{j}$).
Since (again by decoupling) for any $1\leq r_1<\ldots<r_j\leq d$,

\begin{displaymath}
L_d\Big\|\sum_{i_1,\ldots,i_j=1}^na_{i_1,\ldots,i_j}^{j}X_{i_1}^{r_1}\cdots X_{i_j}^{r_j}\Big\|_p \ge
\Big\|\sum_{i_1,\ldots,i_j=1}^na_{i_1,\ldots,i_j}^{j}X_{i_1}\cdots X_{i_j}\Big\|_p,
\end{displaymath}
to finish the proof it is enough to show that
\begin{align}\label{induction_prop}
L_d&\Big\|\sum_{j=0}^d \sum_{1 \le r_1 < \ldots < r_j\le d}\sum_{i_1,\ldots,i_j=1}^n b_{i_1,\ldots,i_j}^{j}X_{i_1}^{r_1}\cdots X_{i_j}^{r_j}\Big\|_p\\
&\ge \sum_{j=0}^d \sum_{1 \le r_1 < \ldots < r_j\le d}\Big\|\sum_{i_1,\ldots,i_j=1}^n b_{i_1,\ldots,i_j}^{j}X_{i_1}^{r_1}\cdots X_{i_j}^{r_j}\Big\|_p\nonumber
\end{align}
for any coefficients $b_{i_1,\ldots,i_j}^{j}$.

We will proceed by induction on $d$. For $d=0$, (\ref{induction_prop}) read as $L_d|b_\emptyset^{0}|\ge |b_\emptyset^{0}|$, which is obviously true. Let us thus assume that (\ref{induction_prop}) holds for all numbers smaller than $d$. Consider any set $k \in \{1,\ldots,d\}$. By the Fubini theorem, Jensen's inequality (applied to the integration with respect to $(X_i^{k})_{i}$) and the assumption that $X_i^{k}$ has mean zero, we get
\begin{align*}
\Big\|&\sum_{j=0}^d \sum_{1 \le r_1 < \ldots < r_j\le d}\sum_{i_1,\ldots,i_j=1}^n b_{i_1,\ldots,i_j}^{j}X_{i_1}^{r_1}\cdots X_{i_j}^{r_j}\Big\|_p\\
&\ge \Big\|\sum_{j=0}^{d-1} \sum_{1 \le r_1 < \ldots < r_j\le d \atop r_l \neq k,\ l=1,\ldots,j }\sum_{i_1,\ldots,i_j=1}^n b_{i_1,\ldots,i_j}^{j}X_{i_1}^{r_1}\cdots X_{i_j}^{r_j}\Big\|_p,
\end{align*}
which by the induction assumption is greater than or equal to
\begin{displaymath}
L_d^{-1}\sum_{j=0}^{d-1} \sum_{1 \le r_1 < \ldots < r_j\le d \atop r_l \neq k,\ l=1,\ldots,j }\Big\|\sum_{i_1,\ldots,i_j=1}^n b_{i_1,\ldots,i_j}^{j}X_{i_1}^{r_1}\cdots X_{i_j}^{r_j}\Big\|_p.
\end{displaymath}
Thus, since $k$ in the above inequality is arbitrary, we get
\begin{align*}
L_d&\Big\|\sum_{j=0}^d \sum_{1 \le r_1 < \ldots < r_j\le d}\sum_{i_1,\ldots,i_j=1}^n b_{i_1,\ldots,i_j}^{j}X_{i_1}^{r_1}\cdots X_{i_j}^{r_j}\Big\|_p\\
&\ge \sum_{j=0}^{d-1} \sum_{1 \le r_1 < \ldots < r_j\le d}\Big\|\sum_{i_1,\ldots,i_j=1}^n b_{i_1,\ldots,i_j}^{j}X_{i_1}^{r_1}\cdots X_{i_j}^{r_j}\Big\|_p\nonumber.
\end{align*}
To finish the proof of (\ref{induction_prop}) it is now enough to notice that for any norm $\|\cdot\|$, vectors $x,y$ and number $K > 0$, $\|x\|\le K\|x+y\|$ implies that $\|x\|+\|y\| \le (2K+1)\|x+y\|$.
This ends the proof of the proposition.
\end{proof}
\bibliographystyle{abbrv}
\bibliography{chaos3dbib}

\end{document}